\newtheorem{Thm}{Theorem}[section]
\newtheorem{lem}[Thm]{Lemma}
\newtheorem{cor}[Thm]{Corollary}
\newtheorem{Cor}[Thm]{Corollary}
\newtheorem{Lem}[Thm]{Lemma}
\newtheorem{Prop}[Thm]{Proposition}
\newtheorem{them}[Thm]{Theorem}
\newtheorem{Result}[Thm]{Result}
\theoremstyle{definition}		% makes these non-italic
\newtheorem{Def}[Thm]{Definition}
\newtheorem{Ex}[Thm]{Example}
\theoremstyle{definition}
\newtheorem{Conj}[Thm]{Conjecture}
\newtheorem{qn}[Thm]{Question}
\theoremstyle{definition}
\numberwithin{Thm}{section}
\numberwithin{equation}{section}
\newcommand{\F}{\mbox{{$\mathscr{F}$}}}
\newcommand{\FF}{\mbox{{$\mathcal{F}$}}}
\newcommand{\N}{\mbox{{$\mathbb{N}$}}}
\begin{document}
\title{Coherency for monoids and purity for their acts}

% AUTHORS

\author{Yang Dandan}

\address{School of Mathematics and Statistics, Xidian University, Xi'an 710071, P.R. China}

\email{ddyang@xidian.edu.cn}

\author{Victoria Gould}

\address{Department of Mathematics, University of York, Heslington, York YO10 5DD, UK}

\email{victoria.gould@york.ac.uk}

\subjclass[2020]{Primary: 20M30, 20M10, Secondary: 16P70}

\keywords{monoids; $S$-acts; equations;  purity; coherency; injectivity}
\thanks{This work was supported by Grant No. EP/V002953/1 of the Engineering and Physical Sciences Research Council,  Grant No. 12171380 of the National Natural Science Foundation of China, by Grant No. QTZX2182 of the Fundamental Research Funds for the Central Universities and by Grant No. 2023-JC-JQ-04 of the Natural Science Basic Research Program of Shaanxi. }
\maketitle
%\renewcommand{\thefootnote}{\empty}

% ABSTRACT
\begin{abstract}
This article examines the three-way relationship between right coherency of a monoid $S$, solutions of equations over $S$-acts, and injectivity properties of $S$-acts.
A monoid $S$  is {\em right coherent} if every finitely generated subact of every finitely presented (right) $S$-act itself has a finite presentation.  {\em Purity properties} of
an $S$-act $A$ may either be expressed in terms of solutions in $A$  of certain consistent sets of equations over $A$, or in terms of  injectivity properties.
For example, an  $S$-act $A$ is {\it absolutely pure (almost pure)} if every finite consistent set of equations over $A$ (in one variable) has a solution in $A$. Equivalently,  $A$ is {\it absolutely pure (almost pure)}
if it is injective with respect to   inclusions of  finitely generated subacts into finitely presented (monogenic  finitely presented) $S$-acts.

Our first main result shows that for a right  coherent monoid  $S$
the classes of  almost pure and absolutely pure $S$-acts coincide.
Our second main result is that a monoid $S$ is right coherent if and only if the classes of mfp-pure and
absolutely pure $S$-acts coincide:  an $S$-act is
 {\em mfp-pure}
if it is injective with respect to  inclusions of   finitely presented subacts into  monogenic finitely presented $S$-acts. We give  specific  examples of  monoids $S$ that are not right coherent
yet are such that the classes of
almost pure and absolutely pure $S$-acts coincide.  Finally we give a condition on a monoid $S$ for all almost pure $S$-acts to be absolutely pure in terms of finitely presented $S$-acts, their finitely generated subacts, and certain canonical extensions.
\end{abstract}

\section{Introduction and preliminaries}\label{sec:intro}

 This article is a contribution to the study of coherency for monoids. Specifically, it concerns  the relationship between coherency of a monoid and purity properties of its acts.
Let $S$ be a monoid  with identity 1.
Coherency of $S$ may be defined in terms of its $S$-acts.
 A {\em right $S$-act}
is a set $A$ together with a map $A\times S\rightarrow A$, where $ (a,s)\mapsto as$, such that for all $a\in A$ and $s,t\in S$ we have $a1=a$ and $a(st)=(as)t$.
{\em Left $S$-acts} are defined dually; by `$S$-act' we will  mean by default `right $S$-act', with the corresponding convention for $R$-modules over a ring $R$.
 An $S$-act
is a representation of  $S$ by mappings  of a set, analogously to the way in which an  $R$-module is a representation of a ring $R$ by homomorphisms of an abelian group. The theory of $S$-acts  both intertwines with that of $R$-modules, and pulls apart from it,  a phenomenon emphasised by this article.

 A monoid $S$  is {\em right coherent} if  every finitely generated subact  of every finitely presented  $S$-act is finitely presented. This definition is analogous to that for a ring $R$, where the notion of $S$-act is replaced by that of  $R$-module. For both monoids and rings, right coherency is an important finitary  condition, that is, one certainly satisfied by all finite monoids or rings, and is strictly weaker than that of being right noetherian
\cite{normak:1977,rotman:1979}. In fact, a ring $R$ is right coherent if and only if every finitely generated right ideal of $R$ has a finite presentation \cite{chase:1960}. The  corresponding  statement is not true for $S$-acts, the free inverse monoid providing a counter-example \cite{gould:2017a}.  Essentially this split in the theories is due to the fact that for $S$-acts, congruences are not determined by subacts.  Moreover,  right coherency of $R$ is equivalent to the property that products of flat {\em left} $R$-modules are flat \cite{chase:1960}. Again, we do not have that tool to use for $S$-acts, although some partial results are known \cite{gould:1992}. Here \cite{bulmanfleming:1990, khosravi:2009} are also relevant, since they consider closure properties of the classes of flat  {\em left} $S$-acts, and use this to define a related notion of  coherency.

 Although a very natural  property,  it transpires that right coherency for monoids is difficult to pin down. Even with the  aid of a Chase-type condition as in Theorem~\ref{thm:coherency},   it can be hard to ascertain  whether or not a given monoid is right coherent. Nevertheless, right coherency (or not) of monoids in a number of important classes has been determined \cite{gould:1992,gould:2017,gould:2017a}. The interaction between coherency and standard algebraic properties is subtle \cite{dandan:2020}.

Coherency for both monoids and rings is related to the model theory of their acts and modules.  In 1976 Wheeler \cite{wheeler:1976} defined a coherent theory for a first order language. A theory of  $S$-acts  or $R$-modules  is coherent in Wheeler's sense if and only if $S$ or $R$ is right coherent in our sense, and this is equivalent to their classes of existentially closed $S$-acts or $R$-modules being first order axiomatisable \cite{gould:1986,wheeler:1976}. Existential closure refers to the existence of solutions of finite consistent sets of equations and inequations. In this article we will be examining the relationship between right coherency and {\em equations},  the latter providing one approach to the properties we refer to as purity properties.

Given an $S$-act $A$ an {equation} over $A$ has one of the following three forms:
 $xs=xt,\, xs=yt$ or $ xs=a$ where $x, y$ are {\it variables}, $s, t\in S$ and $a\in A$ is a {\it constant}.
We will set up our notation for equations over $A$ more formally in  Section~\ref{sec:equations}. A set $\Sigma$ of equations over $A$ is {\it consistent} if $\Sigma$ has a solution in some $S$-act $B$ containing $A$. We are concerned with the question of when a  consistent set $\Sigma$ of equations over $A$, of a particular form, has a  solution in $A$. This leads us to so-called purity notions for an $S$-act. We now outline the main ones of our concern.

 An $S$-act $A$ is {\em absolutely pure} if every finite consistent set of equations with constants from $A$ has a solution in $A$. An $S$-act $A$ is {\it almost pure} if every finite consistent set of equations in one variable with constants from $A$ has a solution in $A$.
  These and other  notions of purity may equivalently be phrased in terms of completion of diagrams,   as weak versions of injectivity, whence the terminology arises.

We recall that an $S$-act  $A$ is {\em injective} if  any diagram of $S$-acts  and $S$-morphisms of the form on the left
\begin{center}

\begin{tikzpicture}
%\draw[help lines] (0,0) grid (3,2);

\node at (0,0) {$C$};
\node at (3,0) {$B$};
\node at (3,2) {$A$};

\node at (3.2,1) {$\theta$};

%\node at (1.5,1.3) {$\overline{\theta}$};

\node at (0,-.5) {\phantom{f.p.}};

\node at (3,-.5) {\phantom{f.g.}};

\node at (0,-1) {\phantom{cyclic}};

\draw [left hook-> , thick] (2.8,0) -- (.3,0);

\draw [->, thick] (3,0.2) -- (3,1.7);

%\draw [dashed, ->, thick] (0.2,0.2) -- (2.8,1.7)

\node at (6,0) {$C$};
\node at (9,0) {$B$};
\node at (9,2) {$A$};

\node at (9.2,1) {$\theta$};

\node at (7.5,1.3) {$\overline{\theta}$};

\node at (6,-.5) {\phantom{f.p.}};

\node at (9,-.5) {\phantom{f.g.}};

\node at (6,-1) {\phantom{cyclic}};

\draw [left hook-> , thick] (8.8,0) -- (6.3,0);

\draw [->, thick] (9,0.2) -- (9,1.7);

\draw [->, thick] (6.2,0.2) -- (8.8,1.7);

\end{tikzpicture}\end{center}
 \noindent may be completed via an $S$-morphism $\overline{\theta}$ as on the right. It is known that an  $S$-act $A$ is absolutely pure (almost pure) if and only if any diagram on the left, where $C$ is finitely presented (and monogenic) and $B$ is finitely generated,  can be completed as on the right (see \cite[Proposition 3.8]{gould:1985} and \cite[Proposition 3.2]{gould:1987}\footnote{In the latter, empty acts were not allowed, hence the slightly different wording.}).   By imposing the condition that $B$ {\em and} $C$ are finitely presented and $C$ is monogenic we obtain the notion we call mfp-purity.  We explain in Section~\ref{sec:equations} how  mfp-purity may be  correspondingly phrased in terms of equations. Analogous notions and similar observations are true for $R$-modules (see, for example,  \cite{stenstrom:1970, megibben:1970}, and also \cite{lu:2012}).

We denote by $\mathcal{A}^{fp}_S(1),\mathcal{A}_S(1)$ and by $\mathcal{A}_S(\aleph_0)$ the classes of mfp-pure, almost pure and absolutely pure $S$-acts, respectively.  Clearly, any  absolutely pure $S$-act is almost pure and any almost pure $S$-act is mfp-pure,   that is,\[ \mathcal{A}_S(\aleph_0)\subseteq \mathcal{A}_S(1)\subseteq \mathcal{A}^{fp}_S(1).\]
 The  question of the converse inclusions motivates much of this paper; we demonstrate that the answers are intimately related to the notion of right coherency.

\begin{qn} \label{qn:ap} For which monoids $S$ is:
\begin{enumerate}
\item $\mathcal{A}_S(\aleph_0)=\mathcal{A}_S(1)$?
\item  $\mathcal{A}_S(1)=\mathcal{A}^{fp}_S(1)$?
\item  $\mathcal{A}_S(\aleph_0)=\mathcal{A}^{fp}_S(1)$?
\end{enumerate} \end{qn}

 It is pertinent to pose Question~\ref{qn:ap}, for the following reasons. Concerning (1), we know that if  {\em all} $S$-acts are almost pure, then {\em all} $S$-acts are absolutely pure \cite{gould:1987}. Second,
an $S$-act $A$ is injective if and only if all consistent sets of equations over $A$ have a solution in $A$
\cite[Proposition 3.10]{gould:1985} and by the Skornjakov-Baer
Criterion \cite{kkm:2000}, this is equivalent to all consistent sets of equations in one variable over $A$ having a solution in $A$.  However, the proof of the Skornjakov-Baer
Criterion  uses arguments that do not work in our  case
of finite sets of equations. From the proof of  \cite[Theorem 4]{megibben:1970},  for a right coherent ring any almost pure module is absolutely pure.  However,  the full solution to the corresponding question to (1) is  still   open for $R$-modules, as well as for $S$-acts. It is worth noting that for some other classes of algebras, with very different signatures, (1) has a positive answer. In particular, if a  group
$G$ has the property that any finite consistent set of equations in one variable with constants from $G$ has a solution in $G$, then it has the property that any finite consistent set of equations in any (finite) number of  variables with constants from $G$ has a solution in $G$;  the same is true for semigroups \cite{levin:64,levin:70}.  These results for semigroups and groups use a property of extensions that does not hold for $S$-acts in general. 

Concerning (2) and (3), by very definition,  a right coherent monoid is such that $\mathcal{A}_S(1)=\mathcal{A}^{fp}_S(1)$. The situation for rings gives us some  pointers to the conjecture that only right coherent monoids will give this equality. The article \cite{lu:2012} demonstrates that all IFP-injective $R$-modules are  absolutely pure  if and only if  $R$ is right coherent. Here the property of being IFP-injective is closely analogous to mfp-purity.   We note that the classical work for rings, as may be found in \cite{stenstrom:1970, megibben:1970, lu:2012}, and other articles, use ring theoretic techniques and  results, including the correspondence with flatness properties, that are not valid for monoids.

We do not fully answer Question~\ref{qn:ap}(1) but we are able to show the class of monoids $S$ such that   $\mathcal{A}_S(\aleph_0)=\mathcal{A}_S(1)$ properly contains the class of right coherent monoids.  It follows that the property of a monoid that $\mathcal{A}_S(\aleph_0)=\mathcal{A}_S(1)$ is a {\em finitary property}, that is, one satisfied by all finite monoids.  On the other hand we fully answer Question~\ref{qn:ap}(2) and (3), with the classes in question being precisely that of  right coherent monoids.
To prove our results, we establish and utilise two pieces  of machinery.  One enables us to pass smoothly between the equational approach to purity and weak injectivity properties.  The other involves constructing, for any $S$-act $A$ and a given purity property, a canonical extension of
$A$ having that property.

We proceed as  follows.  In Section~\ref{sec:pre} we set up our notation and give preliminary results that will be used throughout. In Section~\ref{sec:equations} we introduce the notion of a {\em frame}  $\mathcal{F}$ of a set of equations,  of a {\em frame set} $\F$, and of $\F$-{\em purity}. This allows us to  build the aforementioned machinery to  fully delineate the passage between  purity properties of an $S$-act, and weak injectivity. The results above for almost and absolutely pure $S$-acts are special cases.

 Section~\ref{sec:coherent} contains our first main result, motivated by  Question~\ref{qn:ap}(1).

 \begin{Result}\label{res:coherent} (cf. Theorem~\ref{coherent monoids}). Let $S$ be a right coherent monoid. Then $\mathcal{A}_S(\aleph_0)=\mathcal{A}_S(1)$.
 \end{Result}

To answer Question~\ref{qn:ap}(2) in Section~\ref{sec:const}  we build, for a frame set $\F$ and an $S$-act $A$, an   $\F$-pure extension $A(\F)$ of $A$ that is canonical in the sense $A$ is $\F$-pure if and only if $A$ is a retract of $A(\F)$.
This is our second promised piece of machinery. In Section~\ref{sec:mfp} it is utilised to prove our second main result, which completely answers  Questions~\ref{qn:ap}(2) and (3).

\begin{Result}\label{res:mfp:main} (cf. Theorem~\ref{mfp:main}). A monoid $S$ is right coherent if and only if $\mathcal{A}_S(1)=\mathcal{A}^{fp}_S(1)$ if and only if  $\mathcal{A}_S(\aleph_0)=\mathcal{A}^{fp}_S(1)$.
\end{Result}

An immediate question is whether or not  right coherency is a necessary condition for  $\mathcal{A}_S(\aleph_0)=\mathcal{A}_S(1)$?
 The answer is no. It is easy to see that if $S$ has the property that every finitely generated $S$-act embeds into a monogenic act, then again  $\mathcal{A}_S(\aleph_0)=\mathcal{A}_S(1)$. Such monoids are somewhat special; in particular, they cannot have zeros. Our next  result, in Section~\ref{sec:ex}, hangs on delicate analysis of  a particular monoid, named the Fountain monoid.

  \begin{Result}\label{res:fountain}(cf. Theorem~\ref{fountain monoid}) There exists a monoid $S$ that is not right coherent, is such that not every finitely generated $S$-act embeds into a monogenic act, but $\mathcal{A}_S(\aleph_0)=\mathcal{A}_S(1)$.\end{Result}

   We believe our example is one of a broader class, and we pose the corresponding problem at the end of Section~\ref{sec:ex}.

Finally, in Section~\ref{sec:ap=ap}, we use the machinery developed in Section~\ref{sec:const}  to  give a condition on $S$  for
  $\mathcal{A}_S(\aleph_0)=\mathcal{A}_S(1)$ in terms of finitely presented $S$-acts, their finitely generated $S$-subacts, and their canonical extensions.
The question of whether or not $\mathcal{A}_S(\aleph_0)=\mathcal{A}_S(1)$ for all monoids $S$ is still open, as it is for $R$-modules, although we conjecture the answer will be negative.

We attempt to keep this paper as self-contained as possible. For further details we refer the reader to \cite{howie:1995} for background in semigroup theory, and to \cite{kkm:2000} for  information on monoid acts.

\section{Preliminaries}\label{sec:pre}

The aim of this section is to set up notation and then proceed to preliminary results,  which  will be used throughout the article.

\subsection{The category of $S$-acts}\label{sub:sacts} Let $S$ be a monoid  with identity 1. We recall that a {\em right $S$-act} is a set $A$ together with a map $$A\times S\rightarrow A, (a,s)\mapsto as$$ such that
for all $a\in A$ and $s,t\in S$ we have $a1=a$ and $a(st)=(as)t$. Naturally, we may define left $S$-acts in a dual manner, but in this article all $S$-acts will be right $S$-acts, and for convenience we will refer to them simply as {\em $S$-acts}.  Note that we allow $A=\emptyset$. If $A$ is an $S$-act, then there is a monoid morphism from $S$ to the full transformation monoid $\mathcal{T}_A$ on $A$, taking $s$ to $\rho_s$, where $a\rho_s=as$. Conversely, any morphism $\varphi$ from $S$ to the full transformation monoid $\mathcal{T}_B$ on a set $B$ makes $B$ into an $S$-act
by setting $bs:= b(s\varphi)$.   The study of $S$-acts is, therefore, that of representations of the monoid $S$ by mappings of sets. Not surprisingly, in view of the natural way in which they arise, $S$-acts come under a plethora of names ($S$-sets, $S$-polygons, $S$-systems, to name a few). We note that any  unary algebra may be regarded as   an act, for example, over the free monogenic monoid.

For any monoid $S$ the class of all $S$-acts forms a variety of universal algebras, where the basic operations are the unary operations $\{ \rho_s:s\in S\}$. We refer to an algebra morphism in this variety as an {\em $S$-morphism}. It follows that a function  $\phi:A\rightarrow B$, where $A$ and $B$ are $S$-acts,  is an $S$-morphism if $(as)\phi=(a\phi)s$  for all $a\in A, s\in S$. In the standard way we have a category, the objects of which are $S$-acts and the morphisms of which are $S$-morphisms. A subset $B$ of an $S$-act $A$ is a {\em subact} if $bs\in B$ for all $b\in B, s\in S$.  If $B$ is a subact of $A$ then a {\em retract} from $A$ to $B$ is  an
$S$-morphism $\varphi:A\rightarrow B$ such that $\varphi|_{B}$ is the identity map $1_B$ of $B$. The set of subacts of $A$ is well behaved  in the sense it is closed under unions and intersections. In fact, a disjoint union of any $S$-acts is again an $S$-act in an obvious way.  Any right  ideal of $S$ is a right  $S$-act so  $S$ itself is a right  $S$-act.  That $S$ is the free monogenic (i.e. single generated, or cyclic) $S$-act follows from the below.

An $S$-act $F$ is {\em free on a set $X$} if there is a map $\iota:X\rightarrow F$ such that for any $S$-act $A$ and map $f:X\rightarrow A$ there is a unique $S$-morphism $\varphi:F\rightarrow A$ such that $\iota\varphi=f$. Since $S$-acts form a variety  the free $S$-act  on $X$ exists. It has a transparent structure, which we now describe.
 Put
 \[F_S(X)=X\times S :=\bigcup_{x\in X}xs\]
 where we make the (convenient)  identifications $(x,s):=xs $
 and $(x,1):= x$.
 Define an action of $S$ on $F_S(X)$ by $(xs)t=x(st)$.  Then it is easily seen that $F_S(X)$ is the free $S$-act on $X$ where $x\iota= x$.
 Note that for any $s,t\in S$ and $x, y\in X$, we have that $xs=yt$ if and only if $x=y$ and $s=t$.

Morphic images of $S$-acts are obtained by factoring out by the appropriate notion of congruence. Let $A$ be an $S$-act. A {\em congruence} $\rho$ on $A$ is an equivalence relation such that for any $a,b\in A$ with $a\,\rho\, b$ and any $s\in S$ we have $as\,\rho\, bs$.  We refer to a congruence on $S$ regarded as an $S$-act as a {\em right congruence} on $S$. Denoting the equivalence class of $a\in A$ by $[a]$ we have
\[A/\rho=\{ [a]:a\in A\}\]
 is an $S$-act under the action $[a]s=[as]$. It is called   the {\em quotient} of $A$ by $\rho$. The map  $\nu:A\rightarrow A/\rho$ is then the  natural $S$-morphism with kernel $\rho$.
For $H \subseteq A\times A$  the {\em congruence generated by $H$}, denoted by $\langle H\rangle$,  is the least congruence on $A$ containing $H$. Without further remark we assume that $H$ is always symmetric. An explicit formula for  $\langle H\rangle$   is obtained as follows.

\begin{lem}\label{lem:gen} \cite{howie:1995} Let $A$ be an $S$-act and let  $H\subseteq A\times A$. Then for any $a,b\in A$ we have $a\,\langle H\rangle\, b$ if and only if $a=b$ or there exists a  sequence
\[a=c_1t_1, d_1t_1=c_2t_2, \cdots, d_nt_n=b\] where $t_i\in S$ and $(c_i, d_i)\in H$ for all $1\leq i\leq n$.
\end{lem}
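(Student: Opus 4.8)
The plan is the textbook three-step argument for a congruence generated by a set: define the candidate relation, check it is a congruence containing $H$, and check it is contained in every congruence containing $H$. Concretely, let $\tau$ be the relation on $A$ defined by $a\,\tau\,b$ if and only if $a=b$ or there is a sequence of the stated form $a=c_1t_1,\ d_1t_1=c_2t_2,\ \dots,\ d_nt_n=b$ with $t_i\in S$ and $(c_i,d_i)\in H$. It suffices to prove $\tau=\langle H\rangle$.

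First I would show $\tau$ is an equivalence relation. Reflexivity is built into the definition. For symmetry, given a witnessing sequence for $a\,\tau\,b$, read it backwards and, using the standing assumption that $H$ is symmetric, replace each pair $(c_i,d_i)$ by $(d_i,c_i)\in H$; relabelling the indices turns the reversed list into a witnessing sequence for $b\,\tau\,a$. For transitivity, if $a\,\tau\,b$ and $b\,\tau\,c$ are both witnessed by nontrivial sequences, concatenate them: the junction condition $d_nt_n=b=c'_1t'_1$ is precisely the form required of consecutive terms of a witnessing sequence, so the concatenation witnesses $a\,\tau\,c$; the cases where one of the two relations is the diagonal are immediate.

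Next, $\tau$ is a congruence: if $a\,\tau\,b$ with a witnessing sequence and $s\in S$, then multiplying every equality on the right by $s$ and setting $t'_i:=t_is$ yields $as=c_1t'_1,\ d_1t'_1=c_2t'_2,\ \dots,\ d_nt'_n=bs$, a witnessing sequence for $as\,\tau\,bs$ (and the case $a=b$ is trivial). Moreover $H\subseteq\tau$, since for $(c,d)\in H$ the one-term sequence with $c_1=c$, $d_1=d$, $t_1=1$ witnesses $c\,\tau\,d$. Finally, if $\rho$ is any congruence on $A$ with $H\subseteq\rho$ and $a\,\tau\,b$ via the sequence above, then compatibility of $\rho$ gives $c_it_i\,\rho\,d_it_i$ for each $i$, and chaining these through the equalities $d_it_i=c_{i+1}t_{i+1}$ together with transitivity of $\rho$ gives $a\,\rho\,b$ (the case $a=b$ uses only reflexivity). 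Hence $\tau\subseteq\rho$, so $\tau$ is the least congruence containing $H$, that is, $\tau=\langle H\rangle$.

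There is no genuine obstacle here; the only points requiring care are the index bookkeeping in the symmetry step, where symmetry of $H$ is used in an essential way, and the uniform handling of the degenerate cases $a=b$ and the empty sequence $n=0$.
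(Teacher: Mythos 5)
Your proof is correct; the paper gives no proof of its own here but simply cites Howie, and your argument is the standard one found there: show the sequence relation is a congruence containing $H$ and is contained in every such congruence. You rightly flag that symmetry of the witnessing sequences depends on the paper's standing convention that $H$ is symmetric, which is exactly the point that makes the stated form of the lemma (with only $(c_i,d_i)\in H$, not $(d_i,c_i)$) valid.
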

A sequence as above will be referred to as an {\em $H$-sequence of length $n$}.
 We interpret $a=b$ as belonging to an $H$-sequence of length $0$.

The next definitions are merely the translations of general algebraic notions to our context.

\begin{Def}\label{def:fgfp} An $S$-act $A$ is {\em finitely generated} if $A$ is isomorphic to $F_S(X)/\rho$ for some finite set $X$ and  congruence $\rho$.
\end{Def}

It is clear that a non-empty act $A$ is finitely generated if and only if   for some $n\in \mathbb{N}$ and $a_i\in A$,
$1\leq i\leq n$, we have
$A=a_1S\cup\cdots \cup a_nS$.
 Similarly, $A$ is monogenic if and only if
$A=aS$ for some $a\in S$.

\begin{Def}\label{def:fgfp} An $S$-act $A$ is {\em finitely presented} if $A$ is isomorphic to $F_S(X)/\rho$ for some finite set $X$ and finitely generated congruence $\rho$ on $F_S(X)$.
\end{Def}

We remark that being finitely presented is not dependent on the chosen set of generators.

\subsection{Right coherency}

The notion of coherency is a central one to this article. We recall from Section~\ref{sec:intro}:

\begin{Def}\label{def:coherency} A monoid $S$ is {\em right coherent} if every finitely generated subact of any finitely presented $S$-act is itself finitely presented.
\end{Def}

To test whether a specific monoid is right coherent we usually make use of the following, which is reminiscent of the result of Chase for rings \cite{chase:1960}.

\begin{them}\label{thm:coherency}\cite{gould:1992}
The following are equivalent for a monoid $S$:

(i) $S$ is right coherent;

(ii) any finitely generated subact of $S/\rho$, where $\rho$ is a finitely generated right congruence on $S$, is  finitely presented;

(iii) for any finitely generated right congruence $\rho$ on $S$ and any $s, t\in S$:

\indent\indent (1) the subact $(s\rho)S\cap (t\rho)S$ of the right $S$-act $S/\rho$ is finitely generated;

\indent\indent (2) the annihilator $$\bold{r}(s\rho)=\{(u, v)\in S\times S: su~\rho~sv\}$$ \indent\indent\indent is a finitely generated right congruence on $S$;

(iv) for any finite set $X$ and finitely generated right congruence $\rho$ on $F_S(X)$ and any $a, b\in F_S(X)$:

\indent\indent (1) the subact $(a\rho)S\cap (b\rho)S$ of  $F_S(X)/\rho$ is finitely generated;

\indent\indent (2) the annihilator $$\bold{r}(a\rho)=\{(u, v)\in S\times S: au~\rho~av\}$$ \indent\indent\indent  is a finitely generated right congruence on $S$.
\end{them}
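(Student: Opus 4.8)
The statement to be proved is Theorem~\ref{thm:coherency}, the equivalence of the four characterisations of right coherency. This is a "folklore-style" package of equivalences, and the plan is to prove the cycle $(i)\Rightarrow(iv)\Rightarrow(iii)\Rightarrow(ii)\Rightarrow(i)$, with $(iii)$ and $(iv)$ really being the same statement read over $S$ versus over $F_S(X)$. Throughout I would use the translation machinery already set up: a finitely generated congruence on $F_S(X)$ presents a finitely presented act, a finitely generated subact corresponds to a finitely generated image, and Lemma~\ref{lem:gen} gives the explicit $H$-sequence description of generated congruences, which is what makes the annihilator and intersection conditions tractable.

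First, for $(i)\Rightarrow(iv)$: assume $S$ is right coherent, fix a finitely generated right congruence $\rho$ on $F_S(X)$ (so $F_S(X)/\rho$ is finitely presented) and $a,b\in F_S(X)$. For (1), consider the subact $(a\rho)S\cap(b\rho)S$ of $F_S(X)/\rho$; it is a subact of a finitely presented act, so I want to show it is finitely generated. The cleanest route is to realise it as (the image of) the pullback of the two morphisms $S\to F_S(X)/\rho$ sending $1\mapsto a\rho$ and $1\mapsto b\rho$, present that pullback as a subact of the finitely presented act $S\,\dot\cup\,S$ modulo an appropriate congruence, and invoke coherency to conclude it is finitely presented, hence finitely generated. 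For (2), $\bold{r}(a\rho)$ is the kernel of the morphism $S\to F_S(X)/\rho$, $u\mapsto (a\rho)u$; the image $(a\rho)S$ is a finitely generated (indeed monogenic) subact of a finitely presented act, hence finitely presented by coherency, and a monogenic act $S/\bold{r}(a\rho)$ is finitely presented precisely when $\bold{r}(a\rho)$ is a finitely generated right congruence. The step $(iv)\Rightarrow(iii)$ is just specialisation to $X$ a singleton, noting $F_S(\{x\})\cong S$ as a right $S$-act.

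For $(iii)\Rightarrow(ii)$: take $A$ a finitely generated subact of $S/\rho$ with $\rho$ a finitely generated right congruence, say $A=(s_1\rho)S\cup\cdots\cup(s_n\rho)S$. Induct on $n$. Each $(s_i\rho)S\cong S/\bold{r}(s_i\rho)$, which is finitely presented by (iii)(2). To glue, one uses the standard pushout/amalgamation description: $A$ is obtained from the disjoint union of the $(s_i\rho)S$ by identifying along the pairwise intersections $(s_i\rho)S\cap(s_j\rho)S$, each of which is finitely generated by (iii)(1); a finite colimit of finitely presented acts along finitely generated "overlaps" is finitely presented — this is where I would spell out that the extra congruence needed is generated by finitely many pairs, one batch per generator of each overlap. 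Finally $(ii)\Rightarrow(i)$: a general finitely presented act is $F_S(X)/\sigma$ with $X$ finite and $\sigma$ finitely generated; a finitely generated subact of it is covered by the images of finitely many elements, and by chasing through the bijection between subacts of $F_S(X)/\sigma$ and congruences on $F_S(X)$ containing the lift of $\sigma$, plus an induction on $|X|$ reducing $F_S(X)$ to a disjoint union of copies of $S$, one reduces to the monogenic case handled by (ii); one must also check finitely generated passes correctly through these reductions.

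The main obstacle, and the step that needs genuine care rather than routine bookkeeping, is the gluing argument in $(iii)\Rightarrow(ii)$ (and its mirror in $(ii)\Rightarrow(i)$): showing that a union of two finitely presented subacts whose intersection is finitely generated is again finitely presented. The subtlety is that a congruence on an $S$-act is \emph{not} determined by a subact — the point emphasised in the introduction — so one cannot argue purely set-theoretically; one must produce an explicit finite presentation of $A=A_1\cup A_2$ from presentations of $A_1$, $A_2$ and a finite generating set of $A_1\cap A_2$, verifying via Lemma~\ref{lem:gen} that no further relations sneak in. Everything else is a matter of carefully transporting "finitely generated/finitely presented" across the standard categorical constructions (images, kernels, pullbacks, pushouts) in the category of $S$-acts, using the free-act structure $F_S(X)=X\times S$ to reduce statements about $F_S(X)$ to statements about $S$.
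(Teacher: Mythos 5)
The paper offers no proof of this theorem: it is quoted from \cite{gould:1992}, so there is no in-house argument to measure yours against, and your proposal must stand on its own. Its overall shape (a cycle of implications, with the gluing of finitely presented subacts along finitely generated overlaps as the crux) is the right one, but it contains two genuine gaps.

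First, your argument for $(i)\Rightarrow(iv)(1)$ is circular. You want to prove that $(a\rho)S\cap(b\rho)S$ is finitely \emph{generated}, and you propose to place it inside a finitely presented act and then ``invoke coherency to conclude it is finitely presented, hence finitely generated''. Coherency only applies to subacts already known to be finitely generated -- finite generation is its hypothesis, not its conclusion -- so this deduces what is to be proved. Moreover, the pullback you describe lives in $S\times S$, which is not a subact of a quotient of $S\,\dot{\cup}\,S$ and is not finitely presented in general, so coherency cannot be applied to it either. The correct route is through the union: $(a\rho)S\cup(b\rho)S$ is a $2$-generated subact of $F_S(X)/\rho$, hence finitely presented by $(i)$; writing it as $F_S(\{y,z\})/\sigma$ with $y\mapsto a\rho$, $z\mapsto b\rho$ and $\sigma=\langle H\rangle$ for a finite $H$, an $H$-sequence witnessing $yu\,\sigma\,zv$ (Lemma~\ref{lem:gen}) must cross from $y$-terms to $z$-terms at a ``mixed'' generator $(yp,zq)\in H$, which yields $(a\rho)S\cap(b\rho)S=\bigcup (a\rho)pS$ over the finitely many mixed generators.

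Second, $(ii)\Rightarrow(i)$ as sketched does not go through. One cannot ``reduce $F_S(X)$ to a disjoint union of copies of $S$'': the finitely generated congruence $\sigma$ on $F_S(X)$ links the copies, a finitely generated subact of $F_S(X)/\sigma$ need not decompose along them, and the monogenic pieces $[x]S\cong S/\mathbf{r}([x])$ are not known to be quotients of $S$ by \emph{finitely generated} right congruences, so hypothesis $(ii)$ does not apply to them. The standard repair is to route $(ii)\Rightarrow(iii)\Rightarrow(iv)\Rightarrow(i)$: extract the annihilator and intersection conditions over $S$ from $(ii)$ (by the mixed-generator argument above), prove $(iii)\Rightarrow(iv)$ by a genuine induction on $|X|$ -- this transfer from $S$ to $F_S(X)$ is a substantive lemma, not bookkeeping, and is exactly what your cycle silently omits -- and only then perform the amalgamation argument you correctly flag as delicate, checking with $H$-sequences that no extra relations arise. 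Your plan for that last gluing step is sound; it is the passage from one generator to $|X|$ generators that is missing.
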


 It is known that groups, monoid semilattices (regarded as commutative monoids of idempotents), Clifford monoids (monoid semilattices of groups), free commutative and free monoids are all (right) coherent \cite{gould:1992,gould:2017}. Regular monoids for which every right ideal is finitely generated are right coherent \cite{gould:2017}, where a monoid $S$  is {\em regular} if for all $a\in S$ there exists $x\in S$ such that $a=axa$. A monoid is {\em inverse} if it is regular and its idempotents commute.  Groups, semilattices, and Clifford monoids are all inverse, but not all inverse monoids are right coherent; for example, the free inverse monoid on a set with more than one generator is not right coherent \cite{gould:2017a}.

\section{Equations over $S$-acts}\label{sec:equations}
 As  promised in Section ~\ref{sec:intro}, we now formally set up our notation for   equations.
 We then build machinery that will allow us to pass between solutions of consistent sets of equations  and weak injectivity properties of an act. In order that our techniques have the widest application, we take care over the exact forms of equations, introducing the notions of equation form, frame, and frame set.

In what follows  $X$ is a non-empty set, but we do not always mention $X$ explicitly. The reason is that elements of $X$ will ultimately correspond to variables, the exact labelling of which is usually unimportant.

\begin{Def}\label{def:forms} An {\em equation form (with variables from $X$)}  is an element $f=f_S(X)$  of
\[\big(F_S(X)\times F_S(X)\big)\cup F_S(X).\]
If $f\in F_S(X)\times F_S(X)$ then we say {\em $f$ has type $2$}; if $f\in F_S(X)$ then we say {\em $f$ has type $1$}.
\end{Def}

\begin{Def} \label{def:eqn} Let $A$ be an $S$-act and let $f=f_S(X)$ be an equation form.  An
{\em equation over $A$ with equation form $f$ (and variables from $X$)} is an expression
\[\begin{array}{rcll}
xs&=&yt& \mbox{ if }f\mbox{ is }(xs,yt)\\
xs&=&a&\mbox{ where }a\in A \mbox{ if }f\mbox{ is }xs.\end{array}\]
\end{Def}

 Notice that an equation form of type 2 corresponds to a single equation, whereas a form of type 1 corresponds to different equations, which depend on a choice of an $S$-act $A$ and $a\in A$. It is also worth emphasising that equations over $A$  essentially come in  three types:
\[ xs=yt,\, xs=xt \mbox{ or }xs=a\]
where $x\neq y\in X, s\in S$ and $ a\in A$. In expressions of this kind the roles of $x,y,s,t,a$ etc. will be implicit.  Note that at one and the same time we may regard $x\in X$ as an element of the free $S$-act $F_S(X)$ and as a variable to be substituted by an element of an $S$-act.

 If $\Sigma=\Sigma(X)$ is  a set of equations over an $S$-act $A$ then we  do not insist that every element of $X$ appears in at least one equation, but this does not affect whether or not the set has a solution.   We  denote by  $c(\Sigma)$ the subset of $X$ consisting of the variables appearing in equations in $\Sigma$.

\begin{Def}\label{def:solution} Let $\Sigma=\Sigma(X)$ be a set of equations over an $S$-act $A$. A {\em solution} $(b_x)_{x\in X}$ of
$\Sigma(X)$  in $B$ consists of a subset $\{ b^x:x\in X\}$ of $B$,
where $A$ is a subact of $B$,  such that
$b^xs=b^yt$ for all $xs=yt\in \Sigma$ and $b^xs=a$ for all $xs=a\in \Sigma$.
\end{Def}

In the above, if
$X=\{ x_1,\hdots ,x_n\}$ then we may denote
$(b^{x_i}=b_i)_{1\leq i\leq n}$ by  $(b_1,\hdots, b_n)$, and say $(b_1,\hdots, b_n)$ is a {\em solution} of $\Sigma $ or $\Sigma(b_1,\hdots, b_n)$ {\em holds}.
Since we are only interested in when equations have solutions, we freely identify $xs=yt$ with $yt=xs$ and $xs=a$ with $a=xs$.

 The following is essentially a result of universal algebra, but it is convenient to make it explicit. The proof is routine.

\begin{Lem}\label{lem:retract} Let $\Sigma=\Sigma(X)$ be a set of equations over an $S$-act $A$ and let \[
\kappa_{\Sigma}=\{ (xs,yt),\, (zu,a):xs=yt, zu=a\in \Sigma\}.\]
 A solution $(b_x)_{x\in X}$ of $\Sigma$ in $A$ corresponds exactly to a retract $\varphi:A\,\dot{\cup}\, F_S(X)\rightarrow A$ such that
 $\kappa_{\Sigma}\subseteq \ker \varphi$ and $b_x=x\varphi$ for each $x\in X$.
\end{Lem}

Let $\Sigma=\Sigma(X)$ be a set of equations over an $S$-act $A$. If $A$ is a subact of $B$ then we may regard $\Sigma$ as a set of equations over  $B$. As a consequence of  Lemma~\ref{lem:retract} we have the following.

\begin{Lem}\label{lem:oneway} Let $\Sigma$ be a set of equations over $A$, where $A$ is a retract of an $S$-act $B$. If $\Sigma$ has a solution in $B$, then $\Sigma$ has a solution in $A$.
\end{Lem}

We now formally define consistency for a set of equations.

\begin{Def}\label{lem:consistent} A set of equations $\Sigma$ over an $S$-act $A$ is {\em consistent} if it has a solution in some $S$-act $B$ containing $A$.\end{Def}

 We return to the form of equations, to establish the  notions of purity we are concerned with in this  article.

\begin{Def}\label{def:frame} A {\em frame}  (with variables from $X$) is a non-empty set $\FF=\FF_S(X)$ of equation forms. For a frame  $\FF$   we let
\[\FF^2=\FF^2_S(X)=\FF_S(X)\cap (F_S(X)\times F_S(X)\big)
\mbox{ and }\FF^1=\FF^1_S(X)=\FF_S(X)\cap F_S(X).\]
A {\em frame set}  (with variables from $X$) is a set of frames $\F=\F_S(X)$.
\end{Def}

In Definition~\ref{def:frametoeqns} we use the notion of a multimap. If $U$ and $V$ are sets, then by a
{\em multimap} $\phi:U\rightarrow V$ we mean a subset $\phi$ of
$U\times V$, such that the projection onto the first co-ordinate is onto. This notion is chosen for convenience: if $U=\emptyset$, then $\phi=\emptyset$, but if $U\neq\emptyset$, then $u\phi:=\{ v:(u,v)\in \phi\}\neq\emptyset$.

\begin{Def}\label{def:frametoeqns} Let $\FF$ be a frame, let $A$ be an $S$-act and let $\phi:\FF^1\rightarrow A$ be a multimap. Then
\[\Sigma=\Sigma(\FF,\phi)=\{
xs=yt, \, zu=(zu)\phi: (xs,yt)\in \FF^2, \, zu\in \FF^1\}\]
is {\em the set of equations over $A$ with frame $\FF$ and assignment $\phi$. }
\end{Def}

Notice that a frame $\FF$ with $\FF^1\neq\emptyset$ can give rise to different sets of equations, depending on the choice of $A$ and $\phi$.

\begin{Def}\label{def:eqnstoframes} Let $\Sigma$ be a set of equations over an $S$-act $A$. Then the {\em frame
$\FF(\Sigma)$ of $\Sigma$} is defined by
\[\FF(\Sigma)=\{ (xs,yt), zu: xs=yt, zu=a\in \Sigma, a\in A\}.\]
The {\em multimap} $\phi=\phi(\Sigma)$ where $\phi:\FF^1\rightarrow A$ is defined by \[(zu)\phi =a, \mbox{ where }zu=a\in \Sigma.\]
\end{Def}

If $\Sigma$ is a set of equations over an $S$-act $A$  and $\FF=\FF(\Sigma)$ and  $\phi=\phi(\Sigma)$ are defined as above, then
$\Sigma=\Sigma(\FF,\phi)$.
If  $\Sigma=\Sigma(\FF,\phi)$ is consistent, then it can contain at most one equation with equation form $xs$ for any $xs\in F_S(X)$; this corresponds to $\phi$  being a map (with possibly empty domain). Since we are almost always concerned with consistent sets of equations, almost always our multimaps will be maps.

\begin{Def}\label{def:fpure} Let $\F$ be a frame set.
An $S$-act is {\it $\F$-pure} if every  consistent set of equations $\Sigma$ over $A$ with $\FF(\Sigma)\in\F$ has a solution  in $A$.
\end{Def}

There are some important special kinds of frame sets $\F$, resulting in important special kinds of $\F$-purity; we give the examples we need in this article in Definition~\ref{def:purity} below.

\begin{Prop}\label{prop:retractoneway} Let $A$ be an $S$-act. Suppose that $A$ is a retract of an $\F$-pure $S$-act. Then $A$ is $\F$-pure.
\end{Prop}
\begin{proof} Let $B$ be $\F$-pure and let $\varphi:B\rightarrow A$ be a retract.  Let $\Sigma=\Sigma(X)$ be a  consistent set of equations over $A$ with $\FF(\Sigma)\in \F$.  Given that  unions of $S$-acts are $S$-acts, it is easy to see that
$\Sigma$ may be regarded as a consistent set of equations over $B$, so has a solution $(b_x)_{x\in X}$ in  $B$. Since $A$ is a retract of $B$, $(b_x\varphi)_{x\in X}$  is a solution of $\Sigma$ in $A$. Hence $A$ is $\F$-pure.
\end{proof}

Much of what we do is to build towards a converse of Proposition ~\ref{prop:retractoneway} - for this we need to construct specific extensions of $A$ of which $A$ is a retract.
We are interested in conditions on $A$ such that a given set $\Sigma$ of equations  has a solution in $A$. We remark that it is irrelevant how the variables of
such a $\Sigma$ are labelled; for example, $(b_1,\hdots ,b_n)$ is a solution of
$\Sigma(x_1,\hdots, x_n)$ if and only if it is a solution of
$\Sigma(y_1,\hdots ,y_n)$.  To prevent complete explosion of notational complexity, we  may change the labelling of the variables in a set $\Sigma$
without comment.

One reason why equations over $S$-acts are amenable to study is that we have a criterion for consistency of a set of equations: this is such that, if $S$  is finite, then it is decidable whether a set of equations is consistent. We now outline the relevant ideas, which will be useful throughout this article.

To any frame  $\FF=\FF_S(X)$
we let
\[H(\FF)=\FF^2, \rho_{\mathcal{F}}=\langle H(\FF)\rangle, \,
C(\FF)=F_S(X)/\rho_{\mathcal{F}}\mbox{ and }B(\FF)=\cup_{xs\in \mathcal{F}^1}[xs]S.\]
If $\FF^1=\emptyset$ then $B(\FF)=\emptyset$.
Correspondingly, if $\Sigma=\Sigma(X)=\Sigma(\FF,\phi)$ is a set of equations over $A$  we let
\[ H(\Sigma)=H(\FF), \, \rho_{\Sigma}=\rho_{\mathcal{F}},\,
C(\Sigma)=C(\FF)\mbox{ and }B(\Sigma)=B(\FF).\]
In addition,  we define
\[K(\Sigma)=\{(xs,a):xs=a\in \Sigma\},\]
so that the  congruence $\kappa_{\Sigma}$  on $A\,\dot{\cup}\, F_S(X)$ may be defined by
\[
\kappa_{\Sigma}=\langle H(\Sigma)\cup K(\Sigma)\rangle.\]

\noindent Continuing, we let
\[
\tau_{\Sigma}:A\,\dot{\cup}\, F_S(X)\rightarrow A(\Sigma)\]  be the natural map,  with restriction denoted by
\[\nu_{\Sigma}=\tau_{\Sigma}|_{A}:A\rightarrow A(\Sigma) \]  so that  $a\tau_{\Sigma}=a\nu_{\Sigma}=[a]$. The set of equations
 which we obtain from $\Sigma $ by replacing each equation of the form $xs=a$ by $xs=[a]$ has a solution in $A(\Sigma)$.
Finally, we let
\[\theta_{\Sigma}:B(\Sigma)\rightarrow A\]
be defined by
\[([xs]u)\theta_{\Sigma}=au,\mbox{ where }a=(xs)\phi,
\mbox{ that is, }xs=a\in \Sigma.\]
Notice that at this stage we are not claiming that $\theta_{\Sigma}$ is well defined.

The following three propositions, which we use frequently in our arguments,   are implicit in \cite[Lemma 2.3]{gould:1986}, although not always stated there in full.  For completeness we  state the results in the form required here and provide outline  proofs.

\begin{Prop}\label{prop:easy} Let $\Sigma(X)$ be a  consistent set of equations over an $S$-act $A$ with solution $(b_y)_{y\in X}$. Then for all $ys,zt\in F_S(X)$ we have
\[ys\,\rho_\Sigma \, zt\Rightarrow b_ys=b_zt.\]
\end{Prop}
\begin{proof}
Suppose that $ys~\rho_{\Sigma}~zt$.  There exists  an $H(\Sigma)$-sequence
$$ys=c_1t_1, d_1t_1=c_2t_2, \cdots, d_nt_n=zt$$ where $n\in \N^0$, $t_i\in S$ and $(c_i, d_i)\in H(\Sigma)$ for all $1\leq i\leq n$.
Notice that the equalities are in the free $S$-act $F_S(X)$.  If $n=0$ then $ys=zt$ so that
$y=z, s=t$ and $b_ys=b_zt$. If $n\geq 1$ then we  have $(c_1,d_1)=(yh,wk)\in H(\Sigma)$ so that $yh=wk$ is an equation in $\Sigma$.
Then as $s=ht_1$ we have $b_ys=b_yht_1=b_wkt_1$ and
\[w(kt_1)=c_2t_2,\cdots, d_nt_n=zt\]
is an $H(\Sigma)$-sequence of length $n-1$ joining $w(kt_1)$ to $zt$.
Induction now yields the result.
\end{proof}

\begin{Prop}\label{prop:soln} Let $\Sigma=\Sigma(X)$
be a set of equations over $A$. Then the following conditions are equivalent:
\begin{enumerate} \item $\Sigma$ is consistent;
\item for all $xs=a, yt=b\in \Sigma$ and $v,w\in S$,
\[ xsv \,\rho_\Sigma\, ytw
\Rightarrow av=bw;\]
\item  $\theta_{\Sigma}:B(\Sigma)\rightarrow A$ is well-defined (and is an $S$-morphism);
\item  $\nu_\Sigma$ is an embedding of $A$ into $A(\Sigma)$.
\end{enumerate}
If any of these conditions hold, then $([x])_{x\in X}$ is a solution of $\Sigma$ in $A(\Sigma)$.
\end{Prop}

\begin{proof} Suppose that (1) holds and $(b_x)_{x\in X}$ is a solution of $\Sigma$. If $xs=a, yt=b\in \Sigma$ with $xsv \,\rho_\Sigma\, ytw$, then from Proposition~\ref{prop:easy}
we have \[av=b_xsv=b_ytw=bw,\]giving that (2) holds.

Suppose that (2) holds and $[a]=[b]$ for $a,b\in A$: we  show that $a=b$. We either have this immediately, or else an $H(\Sigma)\cup K(\Sigma)$-sequence
\[a=\alpha_1 t_1, \beta_1t_1=\alpha_2 t_2, \cdots, \beta_n t_n=b\] where $t_i\in S$ and $(\alpha_i, \beta_i)\in H(\Sigma)\cup K(\Sigma)$ for all $1\leq i\leq n$. Here we must have $\alpha_1, \beta_n\in A$ and so $(\alpha_1,\beta_1)=(c,xs)$
and $(\alpha_n,\beta_n)=(yt,d)$ where
$c=xs, yt=d\in\Sigma$. Assume that $(\alpha_i, \beta_i)\in H(\Sigma)$ for all $2\leq i\leq n-1$. We then have that $\beta_1t_1=xst_1$ and $\alpha_nt_n=ytt_n$. By (2) we have that $ct_1=dt_n$ and so
$a=ct_1=dt_n=b$. Induction allows us to conclude that (4) holds.

If (4) holds then, given an earlier remark,  identifying $A\kappa_{\Sigma}$ with $A$ yields (1).  Finally, (2) and (3) are essentially reformulations of each other.
\end{proof}

 Notice that, in the above, if $B=\emptyset$, which corresponds to there being no equations with constants, or equivalently $\FF^1=\emptyset$, then any such set of equations is consistent. Indeed, any such set has a solution $o$ in $A\, \dot{\cup}\, \{ o\}$ where
$\{ o\}$ is a trivial (one-element) $S$-act.

If $\Sigma$ is consistent, then in general, as above, it is convenient to identify $A$ with $A\nu_{\Sigma}$.

\begin{Prop}\label{cor:ret} Let $\Sigma=\Sigma(X)$
be a consistent set of equations over $A$. Then the following conditions are equivalent:
\begin{enumerate} \item $\Sigma$ has a solution in $A$;
\item $A$ is a retract of $A(\Sigma)$;
\item the $S$-morphism $\theta_{\Sigma}:B(\Sigma)\rightarrow A$ lifts to an $S$-morphism $\overline{\theta_{\Sigma}}:C(\Sigma)\rightarrow A$.
\end{enumerate}
\end{Prop}
\begin{proof} If $\Sigma$ has a solution in $A$, then by Lemma~\ref{lem:retract} there is a retraction $\varphi:
A\, \dot{\cup}\, F_S(X)\rightarrow A$ such that
 $\kappa_{\Sigma}\subseteq\ker\varphi$. We may now define an $S$-morphism  $\overline{\varphi}:
A(\Sigma)\rightarrow A$ by $[t]\overline{\varphi}=t\varphi$ which, since $\Sigma$ is consistent, is a   retraction
by Proposition~\ref{prop:soln}.

Conversely, if $A$ is a retract of $A(\Sigma)$ then as $\Sigma(X)$ has a solution in $A(X)$ it must have a solution in $A$. Therefore, (1) and (2) are equivalent.

To show (1) implies (3), we define a map $\theta_{\Sigma}': F_S(X)\rightarrow A$ by $y\theta_{\Sigma}'=b_y$ where $(b_y)_{y\in X}$ is a solution of $\Sigma$ in $A$. Clearly, $H(\Sigma)\subseteq \ker \theta_{\Sigma}'$, and so $\overline{\theta_{\Sigma}}: C(\Sigma)\rightarrow A$ defined by $[t]\overline{\theta_{\Sigma}}=t\theta_{\Sigma}'$ is a well-defined morphism. Further, it is easy to check that $\overline{\theta_{\Sigma}}|_{B(\Sigma)}=\theta_{\Sigma}$. Conversely, suppose that (3) holds. Then $([y]\overline{\theta_{\Sigma}})|_{y\in X}$ is a solution of $\Sigma$ in $A$.
 Therefore, (1) and (3) are equivalent.
\end{proof}

We now give the promised connections between $\F$-purity and weak injectivity properties.

\begin{Thm}\label{thm:connection} Let $\F$ be a frame set and let $A$ be an $S$-act. Then $A$ is $\F$-pure if and only if every diagram of the form on the left, where $\FF\in \F$  and $\theta$ is an $S$-morphism,
\begin{center}

\begin{tikzpicture}
%\draw[help lines] (0,0) grid (3,2);

\node at (0,0) {$C(\FF)$};
\node at (3,0) {$B(\FF)$};
\node at (3,2) {$A$};

\node at (3.2,1) {$\theta$};

%\node at (1.5,1.3) {$\overline{\theta}$};

%\node at (0,-.5) {\phantom{f.p.}};
%\node at (3,-.5) {\phantom{f.g.}};
%\node at (0,-1) {\phantom{cyclic}};

\draw [left hook-> , thick] (2.5,0) -- (.5,0);

\draw [->, thick] (3,0.3) -- (3,1.7);

%\draw [dashed, ->, thick] (0.2,0.2) -- (2.8,1.7)

\node at (6,0) {$C(\FF)$};
\node at (9,0) {$B(\FF)$};
\node at (9,2) {$A$};

\draw [left hook-> , thick] (8.5,0) -- (6.5,0);
\node at (9.2,1) {$\theta$};

\node at (7.4,1.5) {$\overline{\theta}$};

\draw [->, thick] (9,0.3) -- (9,1.7);

\draw [->, thick] (6.2,0.3) -- (8.8,1.7);

\end{tikzpicture}\end{center}  can be completed as in the diagram on the right,
where $\overline{\theta}$ is an $S$-morphism.

\end{Thm}
\begin{proof}  Suppose first that $A$ is $\F$-pure and
$\FF\in \F$ is such that $\theta$ exists as given. For $xs\in \FF^1$ we have $[xs]\in B(\FF)$; put  $a_{xs}=[xs]\theta$ and $(xs)\phi=[xs]\theta$.   Now let $\Sigma=\Sigma(\FF,\phi)$. Then $\theta=\theta_{\Sigma}$ is certainly well-defined, so by Proposition~\ref{prop:soln} we have that $\Sigma$ is consistent. By assumption, $\Sigma$ has a solution $(b_x)_{x\in X}$ in $A$.  By the proof of  Proposition~\ref{cor:ret},  $\overline{\theta}=\overline{\theta_{\Sigma}}: C(\FF)\rightarrow A$ given by $[xs]\overline{\theta}=b_xs$  is a well-defined $S$-morphism extending $\theta$.

Conversely, suppose that any diagram of the given form can be completed. Let $\Sigma=\Sigma(\FF,\phi)$ be a consistent set of equations over $A$ with $\FF\in \F$ and let $\theta=\theta_{\Sigma}:B(\FF)\rightarrow A$. By Proposition~\ref{prop:soln}, $\theta$ is a well-defined $S$-morphism. By assumption, $\theta:B(\Sigma)\rightarrow A$ lifts to an $S$-morphism $\overline{\theta}:C(\Sigma)\rightarrow A$. The result now follows from Proposition~\ref{cor:ret}.
\end{proof}

In the above, where $B(\FF)=\emptyset$, that is, $\FF^1=\emptyset$, completion of the diagram is interpreted as meaning the existence of a morphism $C(\FF)\rightarrow A$.

We now define the various special frames and frame sets in which we will be interested.

\begin{Def}\label{def:purity}
\begin{enumerate} \item A frame $\FF=\FF_S(X)$ is an {\em fp-frame} if $\FF$ is finite and $B(\FF)$ has a finite presentation. If  $\F$ is the frame set
of all fp-frames,   then we refer to an $\F$-pure act as being {\em fp-pure}.
\item A frame $\FF=\FF_S(X)$ is an {\em mfp-frame} if $|X|=1$ and it is an fp-frame.  If  $\F$ is the frame set
of all mfp-frames,   then we refer to an $\F$-pure act as being {\em mfp-pure}.
\item  A frame $\FF=\FF_S(X)$ is an {\em $n$-frame} if $\FF$ is finite and $|X|\leq n$. If  $\F$ is the frame set
of all $n$-frames,   then we refer to an $\F$-pure act as being {\em $n$-absolutely pure}.
\item If $\F$ is the frame set
of all $1$-frames over $X$, then we refer to an $\F$-pure act as being {\em almost pure}.
\item If $\F$ is the frame set
of all finite frames over $X$, then we refer to an $\F$-pure act as being {\em absolutely pure}.
\end{enumerate}\end{Def}

Applying Theorem~\ref{thm:connection} to the frame sets in Definition~\ref{def:purity}  we have the following, which was known in the case of (4) and (5) \cite[Proposition 3.8]{gould:1985}.

\begin{cor}\label{cor:thetransfer} Let $A$ be an $S$-act. Then
\begin{enumerate}
\item $A$ is fp-pure if and only if it is injective with respect to inclusions of finitely presented subacts of finitely presented  $S$-acts;
\item $A$ is mfp-pure if and only if it is injective with respect to inclusions of finitely presented subacts of finitely presented monogenic $S$-acts;
\item $A$ is $n$-absolutely pure if and only if it is injective with respect to inclusions of finitely generated subacts of finitely presented  $S$-acts having no more than $n$ generators;
\item $A$ is almost pure if and only if it is injective with respect to inclusions of finitely generated subacts of finitely presented monogenic $S$-acts;
\item $A$ is absolutely pure if and only if it is injective with respect to inclusions of finitely generated subacts of finitely presented  $S$-acts.
\end{enumerate}
\end{cor}

Considering the frame set of {\em all} frames we immediately have:

\begin{cor}\label{cor:inj}\cite[Proposition 3.10]{gould:1985}   Let $A$ be an $S$-act. Then $A$ is injective if and only if every consistent set of equations over $A$ has a solution in $A$.
\end{cor}

\begin{Def}\label{def:theclasses}
 We denote by $\mathcal{A}^{fp}_S(1)$, $\mathcal{A}_S(1)$ and  $\mathcal{A}_S(\aleph_0)$ the classes of mfp-pure, almost pure and
absolutely pure $S$-acts, respectively.
\end{Def}

Our terminology, referring to {\em purity}, comes from the completion of diagrams.
Alternative terminology, focussing on the  equations, is {\em $n$-algebraically closed} (for $n$-absolutely pure) and {\em algebraically closed}  (for absolutely pure).

Sets of equations without any constants are rather special. In this regard we need the following definition.

\begin{Def}\label{def:local} Let $A$ be an $S$-act. Then $A$ has
{\em local left zeros} if for any finite set $T\subseteq S$ there is a $a=a_T\in A$ such that $a=at$ for each $t\in T$.
\end{Def}

Clearly, if $A$ has local left zeros, then any finite set of equations without constants is consistent over $A$ and indeed has a solution in $A$. For a converse we have the following, which can be extracted from earlier works, for example \cite{gould:1987}, but which for convenience we prove explicitly.

\begin{Prop}\label{prop:local} Let $A$ be an $\F$-pure $S$-act where $\F$ contains all finite frames in one variable contained in $F_S(X)\times F_S(X)$. Then $A$ has local left zeros.
\end{Prop}
\begin{proof} Let $T\subseteq S$ be finite and consider the set of equations $\Sigma(x)=\{ x=xt:t\in T\}$. As remarked earlier, $\Sigma(x)$ has a solution in $A\,\dot{\cup}\, \{ o\}$. Since $A$ is $\F$-pure and $\FF(\Sigma)\in \F$, we have that $\Sigma(x)$ has a solution, say $a\in A$. Clearly $a=at$ for each $t\in T$.
\end{proof}

\section{Purity of $S$-acts over right coherent monoids }\label{sec:coherent}

The aim of this section is to show that  for any right coherent monoid $S$  all almost pure $S$-acts must be absolutely pure, that is, $\mathcal{A}_S(\aleph_0)=\mathcal{A}_S(1)$. The very fact that $S$ is right coherent, then yields that for such $S$ it follows that $\mathcal{A}_S(\aleph_0)=\mathcal{A}_S(1)=\mathcal{A}^{fp}_S(1)$. As finite monoids are right coherent, we deduce that the condition that $\mathcal{A}_S(1)=\mathcal{A}_S(\aleph_0)$ is  a finitary property  for monoids.

\begin{them}\label{coherent monoids} Let $S$ be a right coherent monoid. Then an $S$-act $A$ is almost pure if and only if it is absolutely pure.

\end{them}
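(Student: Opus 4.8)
The plan is to reduce the statement about absolute purity (systems in arbitrarily many variables) to the one-variable case by an induction on the number of variables $v(\Sigma)$, using right coherency at the inductive step to control the subact generated by partial solutions. Let $A$ be almost pure; we show $A$ is $n$-absolutely pure for all $n$ by induction on $n$, the base case $n=1$ being the hypothesis. So suppose $A$ is $(n-1)$-absolutely pure, and let $\Sigma=\Sigma(x_1,\dots,x_n)$ be a finite consistent system over $A$. Since $\Sigma$ is consistent, by Proposition~\ref{prop:soln} we may embed $A$ in $A(\Sigma)$ and regard $([x_i])_{1\le i\le n}$ as a solution there; moreover by Proposition~\ref{cor:ret} it suffices to show $A$ is a retract of $A(\Sigma)$, equivalently (by Proposition~\ref{cor:ret}(3)) that the $S$-morphism $\theta:B(\Sigma)\to A$ of Proposition~\ref{prop:soln}(3) lifts to $C(\Sigma)=F_S(X)/\rho_\Sigma$.

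The idea for the inductive step is to treat $x_n$ specially. Partition $\Sigma$ into the equations involving $x_n$ and those that do not; the latter form a system $\Sigma'$ in $\le n-1$ variables, which is consistent (it has a solution in $A(\Sigma)$), hence by the inductive hypothesis has a solution $(b_1,\dots,b_{n-1})$ in $A$. Now substitute these values into the equations of $\Sigma$ involving $x_n$: this produces a finite system $\Sigma''$ in the single variable $x_n$, with constants in $A$, of the forms $x_ns=x_nt$ and $x_ns=c$ for various $c\in A$. If I can show $\Sigma''$ is consistent over $A$, then almost purity of $A$ gives a solution $b_n\in A$ of $\Sigma''$, and $(b_1,\dots,b_n)$ solves $\Sigma$ in $A$, completing the induction. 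The consistency of $\Sigma''$ is where coherency enters: a priori the solution of $\Sigma'$ chosen in $A$ need not be compatible with any solution of the $x_n$-equations, because $A$ has many solutions of $\Sigma'$ and only some of them extend. To fix this, one must choose the solution $(b_1,\dots,b_{n-1})\in A$ of $\Sigma'$ more carefully — namely, so that it factors through the ``correct'' quotient, i.e. is the image under a suitable retraction $C(\Sigma)\to A$ restricted to the subact generated by the first $n-1$ variables. Coherency enters via Theorem~\ref{thm:coherency}(iv): the relevant subact of $C(\Sigma)$ (obtained by intersecting the subacts $[x_i]S$, or from the annihilator of $[x_n]$ inside $C(\Sigma)$) is finitely generated and finitely presented, so that the partial system one needs to solve in $A$ before invoking almost purity is itself finite, and the compatibility data — the "gluing" along $x_n$ — can be captured by finitely many equations. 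Concretely, one enlarges $\Sigma'$ to a finite consistent system $\widetilde{\Sigma'}$ whose solutions in $A$ are exactly those that extend to solutions of all of $\Sigma$ in $A(\Sigma)$; finite generation of the intersection subact $([x_n]\rho_\Sigma)S\cap \bigl(\bigcup_{i<n}[x_i]\rho_\Sigma)S\bigr)$ and finite presentability of the annihilator congruence $\mathbf{r}([x_n]\rho_\Sigma)$ guarantee that finitely many equations suffice.

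The main obstacle is precisely this last bookkeeping: identifying exactly which finitely many extra equations (in the first $n-1$ variables, with constants from $A$) must be adjoined to $\Sigma'$ so that (a) the enlarged system is still consistent over $A$ and has a solution there by induction, and (b) any such solution is genuinely extendable across $x_n$ to a solution of $\Sigma$ in $A$ via almost purity. Establishing (a) requires checking that the new equations hold in $A(\Sigma)$ (clear, since $([x_i])$ is a solution there), while (b) is where one must pin down the right coherency condition — one uses that $\mathbf{r}([x_n]\rho_\Sigma)$ is a finitely generated right congruence to describe the self-consistency of the $x_n$-equations after substitution, and that the intersection subact is finitely generated to describe the constants that get forced on the $x_i$'s. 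I expect the verification that these finitely many equations exactly encode extendability — rather than over- or under-constraining the problem — to be the delicate technical heart of the argument, with Lemma~\ref{lem:gen} (the explicit $H$-sequence description of generated congruences) and Proposition~\ref{prop:easy} doing the routine work of translating between $\rho_\Sigma$-relations in $C(\Sigma)$ and equalities among solution values.
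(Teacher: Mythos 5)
Your strategy is essentially the paper's proof run in reverse order, and it should work. The paper also inducts on the number of variables and isolates a single variable $x$, but it augments the \emph{one-variable} system first: using Theorem~\ref{thm:coherency} it adjoins to $\Sigma(x)$ finitely many equations coming from finite generating sets of the annihilator congruences $\mathbf{r}([xs])$ and of the intersections $[xs]S\cap[zu]S$ of monogenic subacts of $F_S(X)/\rho_\Sigma$ (the sets $\Sigma_1(x),\Sigma_2(x),\Sigma_3(x)$), solves the resulting system $\Pi(x)$ in $A$ by almost purity, and only then substitutes the value obtained into the remaining equations and verifies that the resulting $(n-1)$-variable system $\overline{\Sigma}$ is consistent, so that the inductive hypothesis applies. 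You propose instead to augment and solve the $(n-1)$-variable part first (by induction) and then verify consistency of the residual one-variable system before invoking almost purity; the coherency inputs are identical (Theorem~\ref{thm:coherency}(iv)), and the compatibility equations you would adjoin are mirror images of $\Sigma_1,\Sigma_2,\Sigma_3$. What your ordering buys is a marginally more uniform treatment (no need to single out a variable occurring in an equation with a constant, nor to dispose of constant-free systems separately via local left zeros); on the other hand, what you defer as ``the delicate technical heart'' is where essentially all of the paper's effort goes --- its Steps (a)--(c) and Cases (i)--(iii) are precisely the identification of the finitely many extra equations and the $H$-sequence chase showing they suffice --- so the proposal is a correct plan rather than a proof. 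Two small corrections: Theorem~\ref{thm:coherency} gives finite \emph{generation} of the annihilator right congruence, not finite presentation; and you only need every solution of the enlarged system to extend, not that its solutions be exactly the extendable ones.
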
 \begin{proof} Let $\Sigma=\Sigma(X)$ be a finite consistent set of equations over $A$. If $|X|=1$, then, as $A$ is almost pure, $\Sigma$ has a solution in $A$. Proceeding by induction, we suppose that $|X|=n\geq 2$ and every finite consistent set of equations over $A$ in at most $n-1$ variables has a solution in $A$.

From Proposition~\ref{prop:local} $A$ has local left zeros. Thus, if $\Sigma$ contains no equations with constants, we can construct a solution to  $\Sigma$  in $A$, as commented before that proposition.

Suppose therefore that $\Sigma$ contains at least one equation with a constant; suppose that the variable for that equation is $x$. Let $(b_y)_{y\in X}$ be a solution for $\Sigma$ and for ease let $b_x=b$.

Let $F_S(X)$ be the free $S$-act on $X$ and let
$\rho_\Sigma$ be defined as in Section~\ref{sec:pre}.

  We use Theorem~\ref{thm:coherency} to build a new consistent set of equations $\Pi(x)$ in the single variable $x$.

{\em Step (a)}  For each $xs\in F_S(X)$, consider $$\bold{r}([xs])=\{(u, v)\in S\times S: xsu~\rho_{\Sigma}~xsv\}.$$ Since $S$ is right coherent,
Theorem~\ref{thm:coherency} gives that $\bold{r}([xs])$ is a finitely generated right congruence on $S$.  We use $H(xs)$ to denote a fixed finite generating set of $\bold{r}([xs])$.   Notice that for all $(u,v)\in H(xs),$ or more generally, $(u, v)\in \bold{r}([xs])$, we have $xsu~\rho_\Sigma~xsv$ and so, by Proposition~ \ref{prop:easy}, $bsu=bsv.$

{\em Step (b)} For each pair of equations $xs=yt, zu=d\in \Sigma(X)$ with $y\neq x$ such that $[xs]S\cap [zu]S\neq \emptyset,$ then, again as $S$ is right coherent, Theorem~\ref{thm:coherency} yields $[xs]S\cap [zu]S$ is finitely generated as a subact of $F_S(X)/\rho_\Sigma$. Let $K=K(xs=yt, zu=d)$ denote a fixed finite subset of $S$ such that $$[xs]S\cap [zu]S=\underset{{k\in {K}}}\cup[x]k S.$$ For each $k\in K$, we use $k_{xs}$ and $k_{zu}$ to denote some fixed elements in $S$ such that $[xk]=[xsk_{xs}]=[zuk_{zu}].$  Then we have $xk~\rho_\Sigma~xs k _{xs}~\rho_\Sigma~zuk_{zu}$, so that $bk=bs k_{xs}=b_zu k_{zu}$ by Proposition \ref{prop:easy}. Notice that $b_zu=d\in A$, so that certainly $b_zu k_{zu}\in A$.

{\em Step (c)} For each pair of equations $xs =yt, xu=zv \in \Sigma(X)$ with $y,z\neq x$ such that $[xs ]S\cap [xu]S\neq \emptyset,$ let $L=L(xs =yt, xu=zv)$ be a fixed finite subset of $S$ such that
\[[xs]S\cap [xu]S=\underset{l\in L}\cup [x]l S.\]
 For each $l\in L$, let $l_{xs}, l_{xu}\in S$ be fixed elements in $S$ such that $[xl]=[xs l_{xs}]=[xu l_{xu}].$ Then  $xl~\rho_\Sigma~xs l_{xs}~\rho_\Sigma~xu l_{xu}$ and so $bl=bs l_{xs}=bul_{xu}$  by Proposition \ref{prop:easy}.

\smallskip

Let $\Sigma(x)$ be the set of all equations of $\Sigma(X)$ in which the only variable that occurs is $x$. Define $$\Pi(x)=\Sigma(x)\cup \Sigma_1(x)\cup \Sigma_2(x)\cup \Sigma_3(x),$$ where
 \[ \begin{array}{lll} \begin{array}{rclrcl}\Sigma_1(x)=\{xsu=xsv:  xs=yt\in \Sigma(X), (u, v)\in H(xs), y\neq x\},\end{array}\\
         \begin{array}{rclrcl}\Sigma_2(x)=\{xsk_{xs}=b_zuk_{zu}:  xs=yt, zu=d\in \Sigma(X), [xs]S\cap[zu]S\neq \emptyset,\\
  y, z\neq x, k\in K(xs=yt, zu=d)
\}\end{array},\\
        \begin{array}{rclrcl}\Sigma_3(x)=\{xsl_{xs}=xul_{xu}:  xs=yt, xu=zv\in \Sigma(X), [xs]S\cap[xu]S\neq \emptyset,\\
  y, z\neq x,l\in L(xs=yt, xu=zv)
\}\end{array}.\end{array} \]

It follows from the above Steps (a), (b) and (c) that $\Pi(x)$ is a finite consistent set of equations with a solution $b$. As $A$ is almost pure, $\Pi(x)$   has a solution $c$ in $A$. Notice that for $xs =yt \in \Sigma(X)$ with $x\neq y$ and $(u, v)\in H(xs)$, we have $csu=csv$ by the construction of $\Sigma_1(x)$. Let $(g, h)\in \mathbf{r}([xs])$. Then $g=h$, so that $csg=csh$, or there exists an  $H(xs)$-sequence
 $$g=u_1t_1, v_1t_1=u_2t_2, \cdots, v_mt_m=h$$ where $(u_i, v_i)\in H(xs)$ and $t_i\in S$ for all $1\leq i\leq m$. In this latter case,  $csu_i=csv_i$ for all $1\leq i\leq m$, giving $$csg=cs u_1t_1=cs v_1t_1=csu_2t_2=\cdots=csv_mt_m=csh.$$

Now let $\Sigma'(x)$ be the set of all equations of $\Sigma(X)$ in which $x$ appears, so that $$\Sigma'(x)=\Sigma(x)\cup \{ xs=yt: xs=yt \in \Sigma(X), x\neq y\}.$$ Let $Y=X\setminus \{ x\}$. Define
\[\overline{\Sigma}=\Sigma(Y)=\big(\Sigma(X)\setminus \Sigma'(x)\big)\cup
\{ cs=yt: xs=yt\in \Sigma(X), y\neq x\}.\]
We claim that $\overline{\Sigma}$ is consistent. To this end, let $F_S(Y)$ be the free
$S$-act on $Y$.  Then
\[\rho_{\overline{\Sigma}}=\langle(yt,zu):yt=zu\in \Sigma(Y)\rangle\subseteq \rho_\Sigma.\]

Let $yt=a, zu=d\in \Sigma(Y)$ with $ytg \,\rho_{\overline{\Sigma}}\, zuh$ for some $g, h\in S$.
We must show that $ag=dh$. We consider the following three cases.

\bigskip{\em Case (i)} $yt=a, zu=d\in \Sigma(X)$ with $y, z\neq x$. Then $ag=dh$
by the consistency of $\Sigma(X)$.

\bigskip{\em Case (ii)}  $yt=xs,  zu=d\in \Sigma(X)$ with $ y, z\neq x,  a=cs$.
We have
\[xsg\,\rho_{{\Sigma}}\,ytg\,\rho_{{\Sigma}}\, zuh \] so that $bsg=b_ytg=b_zuh$ and also  $[xs]S\cap [zu]S\neq \emptyset.$ Then for all $k\in K$ we have $xs k_{xs}=b_zuk_{zu}\in \Sigma_2(x)$ and so $$csk_{xs}=b_zuk_{zu}.$$ Further, since $$[zuh]\in [xs]S\cap [zu]S=\underset{k\in K}\cup [xk] S=\underset{k\in K}\cup [xsk_{xs}]S=\underset{k\in K}\cup [zuk_{zu}]S$$ there exists  $k\in K$
 and $p \in S$  such that $$zuh~\rho_{\Sigma}~xkp~\rho_{\Sigma}~xs k_{xs}p~\rho_{\Sigma}~zuk_{zu}p,$$ giving
 $xsg~\rho_{\Sigma}~xsk_{xs}p,$ and so $(g, k_{xs}p)\in  \bold{r}([xs])$. Now we have
 $$ag=cs g=cs k_{xs}p=b_zuk_{zu}p=b_zuh=dh.$$

\bigskip{\em Case (iii)}  $xs=yt, xv=zu\in \Sigma(X)$ with $y, z\neq x, a=cs, d=cv$.
We have
\[xsg~\rho_{\Sigma}~ytg~\rho_{\Sigma}~zuh~\rho_{\Sigma}~xvh\]
giving $[xs]S\cap[xv]S\neq \emptyset.$ Then for all $l\in L$ we have  $xsl_{xs}=xvl_{xv}\in \Sigma_3(x),$ and so $$cs l_{xs}=cvl_{xv}.$$
Further, since $$[xsg]\in [xs]S\cap[xv]S=\underset{l\in L}\cup [x]l S=\underset{l\in L}\cup [xs l_{xs}] S=\underset{l\in L}\cup [xvl_{xv}]S$$
there exists  $l\in L$ and  $q\in S$ such that $$xvh~\rho_{\Sigma}~xsg~\rho_{\Sigma}~xlq~\rho_{\Sigma}~xsl_{xs}q~\rho_{\Sigma}~xvl_{xv}q.$$ Notice that $(h, l_{xv}q)\in \mathbf{r}([(xv)]$ and $(g, \l_{xs}q)\in \mathbf{r}([(xs)]$, so we have  $$ag=csg=csl_{xs}q=cv
l_{xv}q=cvh=dh.$$

Therefore we have  that $\Sigma(Y)$ is a finite consistent set of equations in
$|Y|=n-1$ variables over $A$, so by our inductive hypothesis, $\Sigma(Y)$ has a solution
$(c_y)_{y\in Y}$ in $A$. Putting $c_x=c$ it is easy to see that
$(c_y)_{y\in X}$ is a solution to $\Sigma(X)$. This completes the proof.
\end{proof}

 As shown in Theorem \ref{fountain monoid}, the converse of Theorem \ref{coherent monoids} is not true, in general.

The next corollary confirms that $\mathcal{A}_S(1)=\mathcal{A}_S(\aleph_0)$ is indeed a finitary  property for monoids. It follows from the fact that right coherency is a finitary property, and Theorem \ref{coherent monoids}.

\begin{Cor}\label{cor:ap=alp}
Let $S$ be a finite monoid. Then every almost pure $S$-act is absolutely pure.
\end{Cor}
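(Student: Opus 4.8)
The plan is to reduce the statement to Theorem~\ref{coherent monoids} by verifying that every finite monoid is right coherent. Once this is established the corollary is immediate: Theorem~\ref{coherent monoids} then tells us that over such a monoid an $S$-act is almost pure if and only if it is absolutely pure, so in particular every almost pure $S$-act is absolutely pure.

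Thus the real content is to check right coherency for a finite monoid $S$. First I would observe that over a finite monoid every finitely generated $S$-act is finite: if $A=a_1S\cup\cdots\cup a_nS$ then $|A|\le n|S|<\infty$. Next I would note that every finite $S$-act $B$ is finitely presented. Writing $B=\{b_1,\dots,b_m\}$, the map $Y=\{x_1,\dots,x_m\}\to B$ sending $x_i\mapsto b_i$ extends to a surjective $S$-morphism $\pi:F_S(Y)\to B$ with kernel some congruence $\rho$, so $B\cong F_S(Y)/\rho$; since $Y$ and $S$ are finite, $F_S(Y)=Y\times S$ is finite, hence $\rho\subseteq F_S(Y)\times F_S(Y)$ is a finite set and so is (trivially) finitely generated. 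Therefore $B$ is finitely presented. Combining the two observations: if $C$ is a finitely presented $S$-act and $B$ is a finitely generated subact of $C$, then $B$ is finite, hence finitely presented, which is exactly Definition~\ref{def:coherency}. So $S$ is right coherent.

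(Alternatively, one can invoke Theorem~\ref{thm:coherency}(iii): over a finite monoid $S\times S$ is finite, so every right congruence on $S$ is finitely generated, giving condition (2) at once, and every subact of $S/\rho$ is finite, hence finitely generated, giving condition (1).) I do not expect any genuine obstacle here: every object in sight — acts, congruences, cartesian products — is finite, so the argument is routine, and the only point requiring a sentence of care is the (elementary) fact that a finite $S$-act is finitely presented, after which one simply quotes Theorem~\ref{coherent monoids}.
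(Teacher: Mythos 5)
Your proposal is correct and follows exactly the route the paper intends: the paper states just before Theorem~\ref{coherent monoids} that finite monoids are right coherent and deduces the corollary from that theorem, leaving the coherency check implicit. Your verification that a finite monoid is right coherent (finitely generated acts are finite, finite acts are finitely presented because all congruences on a finite free act are finite sets and hence finitely generated) is the standard and correct way to fill in that step.
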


\section{Canonical constructions}\label{sec:const}

It is clear from Theorem~\ref{coherent monoids} and its proof that  right coherency of $S$ is strongly related to the property that  $\mathcal{A}_S(1)=\mathcal{A}_S(\aleph_0)$. The main results of the remaining sections,
Theorem~\ref{mfp:main} and Theorem~\ref{main theorem}, add  to this evidence. The purpose of the current section is to provide the machinery to prove these theorems.
Building on techniques established in Section~\ref{sec:pre}, for any  frame set  $\F$, we construct a canonical $\F$-pure extension $A(\F)$ of an arbitrary $S$-act $A$.  Where $\F$ is  the set of all mfp-frames (1-frames,  finite frames) then we denote $A(\F)$ by $A(1)^{fp}$ ($A(1)$,  $A(\aleph_0)$), so that these are
  canonical mfp-pure (almost pure,  absolutely pure) extensions  of $A$.  In Section~\ref{sec:mfp}  we use $A(1)^{fp}$ to prove Theorem~\ref{mfp:main}, which states that  all mfp-pure acts are almost pure,  that is,
 $\mathcal{A}_S^{fp}(1)=\mathcal{A}_S(1)$, if and only if $S$ is right coherent. In Section~\ref{sec:ap=ap} we explicitly use  $A(1)$ and $A(\aleph_0)$  to establish Theorem~\ref{main theorem}, which gives conditions for all almost pure $S$-acts to be absolutely pure, that is,
 $\mathcal{A}_S(1)=\mathcal{A}_S(\aleph_0)$, in terms of finitely presented $S$-acts, their finitely generated $S$-subacts and their canonical extensions.

The $S$-acts that we build are constructed from  infinite towers of extensions of $A$: strictly speaking we cannot merely take their union as we do not have a universal $S$-act of which they are all subacts. Rather, we are taking a direct limit where we are suppressing explicit notation for the embedding of one subact into another.

In what follows, it is convenient to say that $\Sigma$ is {\em a set of $\F$-equations} if $\FF(\Sigma)\in \F$.

\begin{Def}\label{def:built} Let $A$ be a subact of an $S$-act $B$. We say $B$ is {\em $\F$-built}  from $A=A_0$ if for some ordinal $\xi$ we have
\[B=\bigcup_{0\leq i\leq \xi}  A_i\]
where:

(i) for each $0\leq i<\xi$, the subact $A_{i+1}=A_i(\Sigma_i)$ for some consistent set $\Sigma_i$ of $\F$-equations  over $A_i$;

(ii) if $\zeta$ is a limit ordinal, then $A_{\zeta}=\bigcup_{0\leq i<\zeta}A_i$.
\end{Def}

For our next result we require a pair of  technical lemmas.

\begin{Lem}\label{lem:buildingstart} Let $A$ be a subact of an $S$-act $B$. Suppose that $\theta:B\rightarrow A$ is an $S$-morphism. Let $\Sigma=\Sigma(X)$ be a consistent set of $\F$-equations over $B$. Then $\Sigma_{\theta}$, where $\Sigma_{\theta}$ is obtained from $\Sigma$ by replacing each constant $c$ by $c\theta$, is consistent over $A$ and is  a set of $\F$-equations.  Further, $\bar{\theta}:
B(\Sigma)\rightarrow A(\Sigma_{\theta})$ given by
\[[x]\bar{\theta}=[x]\mbox{ and }b\bar{\theta}=b\theta,\]
for $x\in X$ and $b\in B$ (with appropriate interpretation of equivalence classes)
is an $S$-morphism extending $\theta$.
\end{Lem}
\begin{proof} By Proposition~\ref{prop:soln} the set $\Sigma_{\theta}$ is consistent; it follows from the definition that if $\Sigma$ has frame  in $\F$, then so does  $\Sigma_{\theta}$. Again from their definitions, with an application of the first isomorphism theorem,  it is easy to see that there is an $S$-morphism
$\bar{\theta}: B(\Sigma)\rightarrow A(\Sigma_{\theta})$ with the required properties.
\end{proof}

\begin{Lem}\label{lem:building} Let $A$ be   a subact of an $S$-act $B$. Suppose that $\theta:B\rightarrow A$ is an $S$-morphism. Let $\Sigma=\Sigma(X)$ be a consistent set of $\F$-equations over $B$. Then
if $A$ is $\F$-pure there is an $S$-morphism from $B(\Sigma)$ to $A$ extending $\theta$  which is a retract if $\theta $ is a retract.
\end{Lem}
\begin{proof} Following the notation and conclusion of Lemma~\ref{lem:buildingstart} we have an $S$-morphism
$\bar{\theta}:
B(\Sigma)\rightarrow A(\Sigma_{\theta})$ such that
$[x]\bar{\theta}=[x]$ and $b\bar{\theta}=b\theta$. Since $A$ is $\F$-pure, Proposition~\ref{cor:ret}  says there is a retract $\psi:A(\Sigma_{\theta})\rightarrow A$, so that certainly $\bar{\theta}\psi:B(\Sigma)\rightarrow A$ is an $S$-morphism extending $\theta$. The final statement is then clear.
\end{proof}

\begin{Prop}\label{prop:built}
 Let $A$ be an $\F$-pure $S$-act, and let $B$ be $\F$-built from $A$. Then $A$ is a retract of $B$.

\end{Prop}
\begin{proof} We show by transfinite induction that for each $0\leq i\leq\xi$ there is a retraction $\varphi_i:A_i\rightarrow A$,  such that for $i<j$ we have $\varphi_j|_{A_i}=\varphi_i$. This is clearly true for $i=0$.

Suppose that $\varphi_j$ has been defined with the required property for all $0\leq j<\mu$. If
 $\mu$ is a limit ordinal we simply define $b\varphi_{\mu}=b\varphi_i$ where $b\in A_i$ and $0<i<\mu$.  On the other hand, if $\mu=i+1$ then we have that $A_{i+1}=A_i(\Sigma_i)$ for some consistent set $\Sigma_i$ of $\F$-equations over $A_i$.
 We  apply Lemma~\ref{lem:building} to construct the required $\varphi_{i+1}$.

 It is  immediate that $\varphi:B\rightarrow A$ given by $b\varphi=b\varphi_i$, where $b\in A_i$, is a retraction.
\end{proof}

We now proceed to build the promised canonical constructions. They are essentially based on the standard way to build an algebraically or existentially closed structure extending a given one, in any class closed under unions of chains. However, to use our constructions to extract results, a little care is required.

For any set of equations $\Sigma=\Sigma(X)$, and any set $Y_X=\{ y_x:x\in X\}$  of new symbols,  we have another set of equations $\Sigma(Y_X)$, with precisely the same consistency properties as the original. Our convention in what follows is that  for any consistent set of equations $\Sigma$ we {\em choose and fix a set of variables}, such that for any  two different sets of equations, we {\em  choose different variables}. The result of this is that if $\Sigma_{i\in I}$ is a set of  consistent sets of equations over $A$, then $\bigcup_{i\in I}A(\Sigma_i)$ is an $S$-act, and for $i\neq j$ we have
$A(\Sigma_i)\cap A(\Sigma_j)=A$; in other words, we can amalgamate $\{A(\Sigma_i):i\in I\} $   over $A$. Here, as elsewhere, we freely identify the image of $A$ in $A(\Sigma)$ with $A$.

Let $A$ be an $S$-act and let $\F$ be a set of frames. Define
\[\Theta(A,\F)=\{ \Sigma:\FF(\Sigma)\in \F, \, \Sigma\mbox{ is  consistent over }A\}\]
and then put
\[\Omega(A,\F)=\bigcup_{\Sigma\in\Theta(A,\mathscr{F})
    }c(\Sigma).\]

Now let
\[A_1^{\mathscr{F}}=(A\,\dot{\cup}\, F_S(\Omega(A,\F))/\kappa(A,\F)\]
where
\[\kappa(A,\F)=\langle H(\Sigma)\cup K(\Sigma):\Sigma\in \Theta(A,\F)\rangle.\]

The next result relies on a remark above, namely  that, due to our labelling of variables,  for distinct $\Sigma,\Sigma'\in\Theta(A,\F)$ we have
$A(\Sigma)\cap A(\Sigma')=A$.

\begin{Lem}\label{lem:firststep} Let $A$ be an $S$-act and let $\F$ and $\mathscr{G}$ be  frame sets with $\F\subseteq \mathscr{G}$. Then
\begin{enumerate}\item The $S$-act
$A_1^{\mathscr{F}}$ is the amalgamation of the S-acts $A(\Sigma)$ where
$ \Sigma\in \Theta(A,\F)$ over $A$, in particular, $A$ is embedded in $A_1^{\mathscr{F}}$;

\item  $A^{\mathscr{F}}_1\subseteq A^{\mathscr{G}}_1$;
\item every consistent set of $\F$-equations  over $A$ has a solution in
$A_1^{\mathscr{F}}$ and hence in
$A_1^{\mathscr{G}}$;
\item $A$ is $\F$-pure if and only if it is a retract of
$A_1^{\mathscr{F}}$.
\end{enumerate}
\end{Lem}
\begin{proof} (1)-(3) are clear,  given our careful labelling of variables in sets of equations; (4) follows from Proposition~\ref{cor:ret}.
\end{proof}

We cannot say, for example, that if $\F$ is the frame set of all finite frames, then $A^{\mathscr{F}}_1$ is absolutely pure, since we have not considered consistent sets of equations with constants in
 $A^{\mathscr{F}}_1\setminus A$. We need to iterate our construction to achieve the desired canonical extensions of $A$.  Figure~\ref{fig:building} gives an illustration.

\begin{figure}[!ht]
\begin{tikzpicture}
%\draw[help lines,color=gray!60,step=1cm] (0,0) grid (10,5);

  \node at (1.5,2) [right] {$A=A_0^{\mathscr{F}}$};
  \node[color=blue] at (3.3,1) [right] {$A_1^{\mathscr{F}}$};

  %\node[color=red] at (1.2,-0.2) [right] {.};
  %\node[color=red] at (1.1,-0.15) [right] {.};
  %\node[color=red] at (1,-0.1) [right] {.};
  %\node[color=red] at (0.9,-0.05) [right] {.};
  %\node[color=red] at (0.8,0) [right] {.};
  %\node[color=red] at (0.7,0.05) [right] {.};

  %\node at (1.2,0.7) [right] {......};
  \node[color=red] at (5.25,2) [right] {$A_2^{\mathscr{F}}$};
  \node[color=black] at (5.9,2) [right] {...};
  \node[color=green] at (6.70,2) [right] {$A_n^{\mathscr{F}}$};
  \node[color=black] at (7.6,2) [right] {...};
  \node[color=orange] at (8.1,2) [right] {$A(\F)$};

\draw(2,2) ellipse (2 and 1);
%\draw(2.5,2.9) arc (0:270:1);

\draw[color=blue,very thick,solid] (2.5,2.99) parabola bend (3,3.8) (3,3.8);
\draw[color=blue,very thick,solid] (3,3.8) parabola bend (3,3.8) (3.1,2.83);

\draw[color=blue,very thick,solid] (3.1,2.83) parabola bend (4,3.65) (4,3.65);
\draw[color=blue,very thick,solid] (4,3.65) parabola bend (3.65,2.55) (3.65,2.55);

\draw[color=red,very thick,solid] (3,3.8) parabola bend (3.35,4.1) (3.7,3.6);

\draw[color=blue,very thick,solid] (3.65,2.55) parabola bend (4.9,3) (4.9,3);
\draw[color=blue,very thick,solid] (4.9,3) parabola bend (3.97,2.2) (3.97,2.2);

\draw[color=red,very thick,solid] (3.93,3.35) parabola bend (4.58,3.75) (4.75,2.98);

\draw[color=blue,very thick,solid] (3.97,2.2) parabola bend (3.97,2.2) (5.1,1.9);
\draw[color=blue,very thick,solid] (5.1,1.9) parabola bend (3.97,1.8) (3.97,1.8);

\draw[color=blue,very thick,solid] (3.97,1.8) parabola bend (3.97,1.8) (4.9,1);
\draw[color=blue,very thick,solid] (4.9,1) parabola bend (4.9,1) (3.7,1.5);

\draw[color=red,very thick,solid] (4.75,2.8) parabola bend (4.75,2.8) (5.34,2);
\draw[color=red,very thick,solid] (5.34,2) parabola bend (4.75,1.2) (4.75,1.2);

\draw[color=blue,very thick,solid] (3.7,1.5) parabola bend (3.7,1.5) (4,0.55);
\draw[color=blue,very thick,solid] (4,0.55) parabola bend (4,0.55) (3.1,1.15);

\draw[color=red,very thick,solid] (4.75,1.02) parabola bend (4.6,0.45) (3.98,0.7);

\draw[color=blue,very thick,solid] (3.1,1.15) parabola bend (3,0.2) (3,0.2);
\draw[color=blue,very thick,solid] (3,0.2) parabola bend (3,0.2) (2.5,1.01);

\draw[color=red,very thick,solid] (2.85,0.25) parabola bend (1.5,-0.2) (1.5,-0.2);

\draw[color=red,very thick,dotted] (1.3,-0.2) parabola bend (1.3,-0.2) (0.7,-0.05);

\draw[color=blue,very thick,dotted] (1.5,0.6) parabola bend (2.1,0.5) (2.1,0.5);

\draw[color=green] (6.2,-0.2) arc (-30:30:4.4);

%(6,4.2) parabola bend (6,4.2) (6.4,2);
%\draw[color=green] (6.4,2) parabola bend (6,-0.2) (6,-0.2);

\end{tikzpicture}

\caption{Building $A(\F)$}
  \label{fig:building}
\end{figure}

 Again, let $\F$ be a frame set and put $A=A_0^{\mathscr{F}}$. Suppose that  for $1\leq i$ we have constructed the $S$-acts  $A_{i-1}^{\mathscr{F}}$. We now let
 $A_i^{\mathscr{F}}=(A_{i-1})_1^{\mathscr{F}} $,
 where at each stage, in each set of equations, we always choose distinct variables.
  This gives us a sequence
\[A_0^{\mathscr{F}}\subseteq A_1^{\mathscr{F}}\subseteq A_2^{\mathscr{F}} \subseteq \hdots.\]
 We let \[A(\mathscr{F})=\bigcup_{i\in\mathbb{N}^0}A_i^{\mathscr{F}}.\]

Given the way we have labelled our variables, and our conventions on identification, we also have that, for any frame sets $\F$ and $\mathscr{G}$ with $\F\subseteq \mathscr{G}$, and any  $i,j\in \mathbb{N}^0$ with $i\leq j$,
 \[A^{\mathscr{F}}_i\subseteq A^{\mathscr{G}}_j\]
and consequently,
 \[A(\F)\subseteq A(\mathscr{G}).\]
 To avoid technical considerations of cardinality, we restrict our attention in Theorem~\ref{thm:towers} to finite frames.
 Indeed, for ease of application, we
 have in some sense been over generous with the nature of our extensions, so that what we have constructed for the set of all frames is not the injective hull \cite{berthiaume:1967}.

 \begin{Thm}\label{thm:towers} Let $A$ be an $S$-act and let $\F$ be a  set of finite frames. Then
 $A(\F)$ is $\F$-pure.  Further,
 $A$ is $\F$-pure  if and only if $A$ is a retract of
  $A(\F)$.
 \end{Thm}
 \begin{proof} The first statement follows from the usual finiteness arguments: any
 finite consistent set of equations over $A(\F)$ must be consistent over $A^{\mathscr{F}}_m$ for some $m$ and hence have a solution in $A^{\mathscr{F}}_{m+1}\subseteq A(\F)$.

 If $A$ is a retract of
  $A(\F)$, then Lemma~\ref{lem:oneway} gives that $A$ is $\F$-pure.
  For the converse, we apply Proposition~\ref{prop:built}.
 \end{proof}

\section{A new characterisation of coherency}\label{sec:mfp}

The aim of this section is to provide a so-called homological characterisation of coherency. That is, we characterise coherency of a monoid $S$ in terms of two classes of $S$-acts (each defined using completion of diagrams) coinciding.

Before stating our result we set up some notation. Let $\F$ be the frame set of  all mfp-frames and let $A$ be an $S$-act. We say an element
$\epsilon$
of $A(\F)$ has {\em level} $L(\epsilon)=n$, where $n\in \N^0$, if
$\epsilon\in A^{\mathscr{F}}_n\setminus A^{\mathscr{F}}_{n-1}$ and   $A^{\mathscr{F}}_{-1}$ in interpreted as $\emptyset$.

We now state the main result of this section, and devote the remainder of the section to its proof.

\begin{Thm}\label{mfp:main} The following are equivalent for monoid $S$:
\begin{enumerate}\item $S$ is right coherent;
\item every mfp-pure $S$-act is almost pure;
\item every mfp-pure $S$-act is absolutely pure.
\end{enumerate}
\end{Thm}

\begin{proof} If $S$ is right coherent, then every mfp-pure act is almost pure, since the right coherency of $S$ gives us by definition that every finitely generated subact of every finitely presented monogenic $S$-act has a finite presentation. Thus (1) implies (2) and clearly, (3) implies (2). We show that (2) implies (1). The result that (2) implies (3) then follows from Theorem~\ref{coherent monoids}.

Assume that (2) holds. Let $D$ be a finitely generated subact of a finitely presented and monogenic $S$-act $C$. By definition, we have that
$C=S/\rho$ where $\rho$ is a finitely generated right congruence on $S$, so that   $\rho=\langle H\rangle$ where $H\subseteq S\times S$ is finite. We aim to show that $D$ has  a finite  presentation and then call upon  Theorem~\ref{thm:coherency} to deduce that  $S$ is right coherent.

Without loss of generality we may assume that $D\neq\emptyset$, so that
\[D=\bigcup_{b\in I}[b]S\subseteq S/\rho=C,\]
where $I\neq\emptyset$ is finite and $[u]$ denotes the $\rho$-class of $u\in S$. Let $Z=\{ z_b:b\in I\}$ be a set of symbols in bijective correspondence with $I$ and consider
$\psi:F_S(Z)\rightarrow D$ given by
\[z_b\psi=[b].\]
To show that $D$ is finitely presented, we must show that the congruence $\ker\psi$ on $F_S(Z)$ is finitely generated.

As in Section~\ref{sec:const} we build the mfp-pure extension $D^{fp}(1)$ of $D$. Since $D$ is embedded in both $C$ and $B^{fp}(1)$, and by assumption $D^{fp}(1)$ is almost pure, the inclusion map $\iota:D\rightarrow D^{fp}(1)$ extends to an $S$-morphism $\overline{\iota}:C\rightarrow D^{fp}(1)$.

\begin{Lem}\label{lem:finitebuild} Let $\gamma\in  D^{fp}(1)$ have level $n$. Then $\gamma$ lies in a subact of $ D^{fp}(1)$ built from {\em finitely many} $\F$-extensions,  starting with $D$ as the base $S$-act.% of $D$.

\end{Lem}
\begin{proof} We proceed by induction. If $\gamma\in D$ the result is clear. Suppose now that $\gamma\in D^{fp}(1)_n\backslash D^{fp}(1)_{n-1}$. Then
$\gamma=(xs)$ where $(xs)$ denotes the equivalence class of $xs$ in $(D^{fp}(1)_{n-1}\cup xS)/ \rho_{\Sigma}$ for some finite consistent set of equations $\Sigma=\Sigma(\FF,\phi)$ in one variable. Since $\Sigma$ is finite, it certainly includes only finitely many equations with the form $xt=t\phi$.
Since the level of each $t\phi$ is strictly less than $n$, induction gives that the elements $t\phi$ each lie in
subacts of $ D^{fp}(1)$ built from {\em finitely many} $\F$-extensions of $D$.  The union of all those subacts gives a subact $A$ such that $\gamma$ lies in the extension of $A(\Sigma)$ of $A$. The result follows by induction.
\end{proof}

\begin{Cor}\label{lem:finitebuild} The element $[1]\overline{\iota}$ lies in $\overline{D}$, where
$\overline{D}$ is a subact of $ D^{fp}(1)$ built from {\em finitely many} $\F$-extensions of $D$.
\end{Cor}

Let $\mathcal{S}$ denote the finite set of finite consistent sets of equations $\Sigma$ used in building $\overline{D}$ from $D$. We note that each $\Sigma$ has a single variable, and all the variables are distinct. As much as possible, we suppress mention of the variable. In fact, we may in many cases omit it altogether in the sense that, for a set of equations $\Sigma=\Sigma(x)$ in one variable we may identify the congruence $\rho_{\Sigma}$ on
$F_S(x)$ with a right congruence $\rho$ on $S$.
For each $\Sigma\in \mathcal{S}$ we have by definition of
$\overline{D}$ that $B(\Sigma)$ is finitely presented. Since $B(\Sigma)$ is a subact of  $C(\Sigma)=xS/\rho_{\Sigma}$, we may drop mention of $x$ and consider $B(\Sigma)$ to be a subact of $C=S/\rho_{\Sigma}$.

For $\Sigma\in \mathcal{S}$
 choose and fix a set of symbols $\{ z_t^{\Sigma}:t\in  \mathcal{F}^1\}$ and let
 \[\psi_{\Sigma}:\bigcup_{t\in \mathcal{F}^1}z^{\Sigma}_{t}\rightarrow
\bigcup_{t\in \mathcal{F}^1}(t),\]
where $(u)$ is the $\rho_{\Sigma}$-class of $u\in S$, be given by
\[z_t\psi_{\Sigma}=(t).\]
Now let
\[\ker\psi_{\Sigma}=\langle J(\Sigma)\rangle\]
where $J(\Sigma)$ is finite  by virtue of $B(\Sigma)$ being finitely presented.

\begin{lem}\label{lem:dropping down} Let $C$ be an $S$-act and let $\Sigma=\Sigma(\FF,\phi)$ be a finite consistent set of equations in one variable over $C$. For an element $(xu)$ of
\[C(\Sigma)=(C\, {\cup}\,  xS)/\kappa_{\Sigma},\]
where $(xu)$ denotes the $\kappa_{\Sigma}$-class of $xu$, we have that $(xu)=(c)$ for some $c\in C$ if and only if $xu\,\rho_{\Sigma}\, xv\ell$ for some $xv\in \FF^1$ and $\ell\in S$.
\end{lem}
\begin{proof} Let $c\in C$. We have that $(xu)=(c)$ if and only if $xu\,\kappa_{\Sigma}\, c$. Since $xu\neq c$ that would necessitate  an
$H(\Sigma)\cup K(\Sigma)$-sequence
\[xu=\alpha_1t_1,\, \beta_1t_1=\alpha_2 t_2,\cdots,
\beta_nt_n=c\]
for some $n\in\N$, $(\alpha_i,\beta_i)\in H(\Sigma)\cup K(\Sigma)$ and $t_i\in S$, for $1\leq i\leq n$. Clearly
 $(\alpha_n,\beta_n)\in K(\Sigma)$; let $k$ be the least such that $(\alpha_k,\beta_k)\in K(\Sigma)$. Then
 $(\alpha_k,\beta_k)=(xv,v\phi)$ for some $xv\in \FF^1$, and
 $xu\,\rho_{\Sigma}\, xvt_k$, completing the argument.
\end{proof}

We now suppress the mention of the variables in our sets of equations.
A {\em widget} is a pair $(\gamma,h)$ where $\gamma\in \overline{D}$ and $h\in S$; the {\em level of a widget} $L=L(\gamma,h)$ is the  level $L(\gamma)$ of its first co-ordinate. If $(\gamma,h)$ is a level $n$ widget, where $n\in\N^0$, then $\gamma h$ has level $m$ for some
$0\leq m\leq n$. We say that a widget $(\gamma,h)$ is {\em stable} if $\gamma $ has the same level as $\gamma h$. If $(\gamma,h)$ is not stable, then from Lemma~\ref{lem:dropping down} we must have
that $\gamma=(c)$, where $(c)$ is the $\rho_{\Sigma}$-class of some $\Sigma=\Sigma(\FF,\phi)\in \mathcal{S}$, and $ch\,\rho_{\Sigma}\, vk$ for some  $v\in \FF^1$ and $k\in S$. Putting $\delta=v\phi$ we note that $(\delta, k)$ is itself a widget and in $\overline{D}$ we have $\gamma h=\delta k$. We say that
the widget $(\gamma,h)$ {\em descends} to the widget $(\delta,k)$ and write $(\gamma,h)\rightarrow (\delta,k)$. A {\em widget descent} is a finite sequence of widget descents
\[(\gamma_1,h_1)\rightarrow (\gamma_2,h_2)\rightarrow\hdots
\rightarrow (\gamma_{\ell},h_{\ell})
\]
where $(\gamma_{\ell},h_{\ell})$ is stable. Notice that each widget has a widget descent. We choose and fix a widget descent for each widget. Starting from level 0 widgets, we may do this in such a way that if
\[(\gamma_1,h_1)\rightarrow (\gamma_2,h_2)\rightarrow\cdots
\rightarrow (\gamma_{\ell},h_{\ell})
\]
is the  fixed widget descent for $(\gamma_1,h_1)$, then
for any $2\leq i\leq \ell$ we have that
\[(\gamma_i,h_i)\rightarrow (\gamma_{i+1},h_{i+1})\rightarrow\cdots
\rightarrow (\gamma_{\ell},h_{\ell})
\]
is the  fixed widget descent for $(\gamma_i,h_i)$.

We now define a finite set of widgets $\mathcal{W}$ which will be used to construct a set of generators of $\ker\psi$. We do this by adding finitely many elements, in finitely many stages, to $\mathcal{W}$, starting with the empty set.

Let $\sigma=[1]\overline{\iota}$. For each $(u,v)\in H$
and $b\in I$ we put
\[(\sigma, u),(\sigma,v), (\sigma,b)\mbox{ into }\mathcal{W}.\]
For each $\Sigma=\Sigma(\FF,\phi)\in\mathcal{S}$ and each $(z^{\Sigma}_th,
z^{\Sigma}_uk)\in J(\Sigma)$ we let
\[(t\phi,h),(u\phi,k)\in\mathcal{W}. \]
For each of the widgets $(\gamma,h)$ we have added to $\mathcal{W}$, we now
add to $\mathcal{W}$ all the widgets in the fixed, chosen, descent of $(\gamma,h)$. This yields a finite set of widgets
$\mathcal{W}$. Let $\mathcal{W}_0$ be the set of level 0 widgets in $\mathcal{W}$.
For $\gamma\in D$ we let
\[\gamma=[s(\gamma)q_{\gamma}],\]
where $s(\gamma)\in I$ and $q_{\gamma}\in S$.

Let
\[\mathcal{V}_1=\{ \big(z_{s(\gamma)}q_{\gamma}h, z_{s(\delta)}q_{\delta}k\big):(\gamma,h),(\delta,k)\in \mathcal{W}_0, \gamma h=\delta k\}.\]
For any $b\in I$ we have the fixed widget descent starting from
$(\sigma ,b)$. Since
\[\sigma b=[1]\overline{\iota}b=[b]\overline{\iota}=[b]\iota
=[b]\in D,\]
the widget $(\sigma ,b)$ has a widget descent terminating in a stable widget $(\gamma(b), p(b))$. In particular,
$[b]=\sigma b=\gamma(b) p(b)$. We now let
\[\mathcal{V}_2=\{ (z_b,z_{s(\gamma(b))}q_{\gamma(b)}p(b)):b\in I\}\]
and let
\[\mathcal{V}=\mathcal{V}_1\cup\mathcal{V}_2.\]

\begin{lem}\label{lem:easyinclusion}
We have that $\mathcal{V}\subseteq \ker\psi$.
\end{lem}
\begin{proof} Let $(z_{s(\gamma)}q_{\gamma}h, z_{s(\delta)}q_{\delta}k\big)\in \mathcal{V}_1$ be such that $(\gamma,h),(\delta,k)\in \mathcal{W}_0$ and $ \gamma h=\delta k$. Then $$(z_{s(\gamma)}q_{\gamma}h)\psi=(z_{s(\gamma)}q_{\gamma})\psi h=[s(\gamma)q_{\gamma}]h=\gamma h$$ and similarly, $$(z_{s(\delta)}q_{\delta}k\big)\psi=(z_{s(\delta)}q_{\delta}\big)\psi k=[s(\delta)q_{\delta}]k=\delta k$$
As $\gamma h=\delta k$, we have $(z_{s(\gamma)}q_{\gamma}h, z_{s(\delta)}q_{\delta}k\big)\in \ker\psi$, so that $\mathcal{V}_1\subseteq \ker\psi$.

To show $\mathcal{V}_2\subseteq \ker \psi$, we let $(z_b,z_{s(\gamma(b))}q_{\gamma(b)}p(b))\in \mathcal{V}_2$ with $b\in I$. Then $$(z_{s(\gamma(b))}q_{\gamma(b)}p(b))\psi=[s(\gamma(b))q_{\gamma(b)}]p(b)=\gamma(b)p(b)=[b]=(z_b)\psi$$ implying $(z_b,z_{s(\gamma(b))}q_{\gamma(b)}p(b))\in \ker \psi$, so that $\mathcal{V}_2\subseteq \ker\psi$.  Therefore,  $\mathcal{V}\subseteq \ker\psi$, as required.
\end{proof}

Our aim now is to show the converse to Lemma~\ref{lem:easyinclusion}, namely that $\ker\psi\subseteq
\langle \mathcal{V}\rangle$. To this end we need some further terminology.

\begin{Def}\label{def:widgetsequence} Let $n\in\N^0$. A
{\em $\mathcal{W}$-widget sequence} connecting
\[(\delta_0,k_0s_0)\mbox{ to }(\gamma_{n+1}, h_{n+1} s_{n+1}) \]
is a sequence
\[ (\delta_0,k_0s_0)=(\gamma_1, h_1 s_1),\,
(\delta_1,k_1s_1)=(\gamma_2, h_2 s_2),\, \cdots,
\, (\delta_n,k_ns_n)=(\gamma_{n+1}, h_{n+1} s_{n+1})\]
where:
\[\begin{array}{ll}
(\delta_i,k_i)& \mbox{are widgets in }\mathcal{W},\, 0\leq i\leq n\\
(\gamma_j,h_j)&\mbox{are widgets in }\mathcal{W},\, 1\leq j\leq n+1\\
\delta_0k_0,\gamma_{n+1}h_{n+1}&\mbox{are elements of }D\\
\gamma_ih_i=\delta_ik_i& 1\leq i\leq n.
\end{array}\]
The {\em level $L$} of a $\mathcal{W}$-widget sequence is the level of the greatest $\delta_i$ (where $0\leq i\leq n$) and the {\em value} of a $\mathcal{W}$-widget sequence is $(L,\ell)$, where $L$ is the level, and $\ell$ is the number indices $i\in\{ 0,\cdots, n\}$ such that $\delta_i$ has level $L$.
\end{Def}

In what follows, values of $\mathcal{W}$-widget sequences are ordered lexicographically.

\begin{Lem} Let $(\delta_0,k_0s_0)$ and $(\gamma_{n+1}, h_{n+1} s_{n+1})$ be connected via a  $\mathcal{W}$-widget sequence as in Definition~\ref{def:widgetsequence}. Suppose that the level of this sequence is strictly greater than $0$. Then there is a $\mathcal{W}$-widget sequence of lower value connecting  $(\delta_0',k_0's_0)$ and $(\gamma_{n+1}', h_{n+1}'s_{n+1})$, where $(\delta_0',k_0')$ is in the fixed descent of the widget $(\delta_0,k_0)$ and  $(\gamma_{n+1}', h_{n+1}')$ is in the fixed descent of the widget $(\gamma_{n+1}, h_{n+1})$ (including  the possibility they are unchanged).
\end{Lem}

 We begin by outlining the strategy of the proof.
Let us abbreviate our $\mathcal{W}$-widget sequence as
\[w_0, w_1,\cdots, w_n\]
where
\[w_i=(\delta_i,k_is_i)=(\gamma_{i+1}, h_{i+1} s_{i+1}),\]
for $1\leq i\leq n$. We pick an $i\leq j$ such that $w_i,w_{i+1},\cdots, w_j$ have highest level, and either $w_{i-1}$ has lower level, or $i=0$, and
either $w_{j+1}$ has lower level, or $j=n$.  We then `pull down' the subsequence
$w_i, \cdots, w_j$ to a sequence of widgets $w_{i}'=v_\ell, v_{\ell+1}, \cdots, v_{m}=w_{j}'$ such that we have a new $\mathcal{W}$-widget sequence
\[w_0, w_1,\cdots, w_{i-1}, w_i'=v_\ell, v_{\ell+1}, \cdots, v_{m}=w_{j}', w_{j+1}, \cdots ,w_n \]
with lower value.  This is illustrated in Figure~\ref{fig:widgets}.

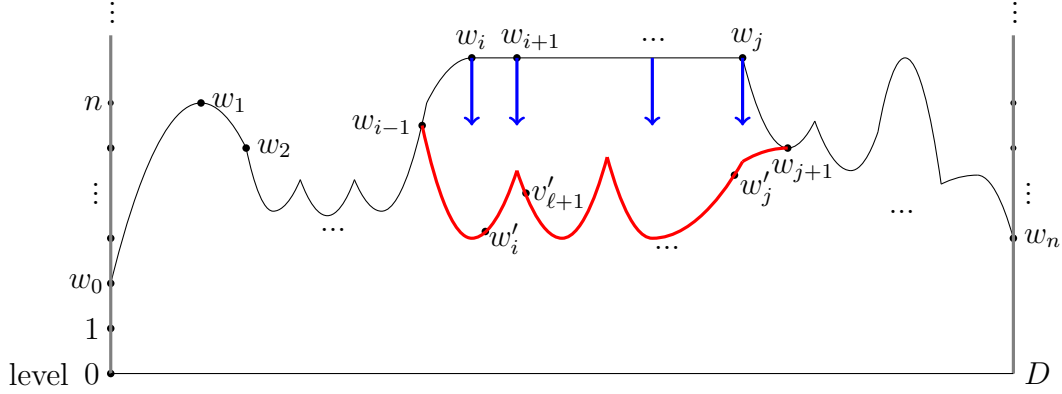
\begin{figure}[!ht]
\begin{tikzpicture}
  \draw[help lines,color=white!30,step=0.5*1.2cm] (-5*1.2,-0.5*1.2) grid (5*1.2,3*1.2);
  \fill (-5*1.2,-0.5*1.2) circle (.05);
 % \fill (0,5) circle (.04);
  %\fill (0,0) circle (.04);
  %\fill (10,-1) circle (.04);

  \fill (-5*1.2,0.5*1.2) circle (.05);
  \fill (1*1.2-5*1.2,2.5*1.2) circle (.05);
  \fill (1.5*1.2-5*1.2,2*1.2) circle (.05);
  \fill (3.45*1.2-5*1.2,2.25*1.2) circle (.05);
  \fill (4*1.2-5*1.2,3*1.2) circle (.05);
  \fill (4.5*1.2-5*1.2,3*1.2) circle (.05);
  \fill (7*1.2-5*1.2,3*1.2) circle (.05);
  \fill (7.5*1.2-5*1.2,2*1.2) circle (.05);
  \fill (10*1.2-5*1.2,1*1.2) circle (.05);

  \fill (4.15*1.2-5*1.2,1.075*1.2) circle (.05);
  \fill (4.6*1.2-5*1.2,1.5*1.2) circle (.05);
  \fill (6.91*1.2-5*1.2,1.7*1.2) circle (.05);
  \fill (0-5*1.2,0*1.2) circle (.05);
  \fill (0-5*1.2,1*1.2) circle (.05);
  \fill (-5*1.2,2*1.2) circle (.05);
  \fill (-5*1.2,2.5*1.2) circle (.04);

  \fill (10*1.2-5*1.2,2*1.2) circle (.04);
  \fill (10*1.2-5*1.2,2.5*1.2) circle (.04);

  \node at (-0.5-5*1.2,-0.5*1.2) [right] {$0$};
  \node at (-0.5-5*1.2,2.5*1.2) [right] {$n$};
  \node at (10*1.2-5*1.2,-0.5*1.2) [right] {$D$};
  \node at (-0.5-5*1.2,0*1.2) [right] {$1$};
  \node at (-1.5-5*1.2,-0.5*1.2) [right] {level};

  \node at (-0.7-5*1.2,0.5*1.2) [right] {$w_0$};
  \node at (1*1.2-5*1.2,2.5*1.2) [right] {$w_1$};
  \node at (1.5*1.2-5*1.2,2*1.2) [right] {$w_2$};
  \node at (2.55*1.2-5*1.2,2.25*1.2) [right] {$w_{i-1}$};
  \node at (3.7*1.2-5*1.2,3.2*1.2) [right] {$w_i$};
  \node at (4.2*1.2-5*1.2,3.2*1.2) [right] {$w_{i+1}$};
  \node at (5.75*1.2-5*1.2,3.2*1.2) [right] {...};
  \node at (6.75*1.2-5*1.2,3.2*1.2) [right] {$w_j$};
  \node at (7.2*1.2-5*1.2,1.75*1.2) [right] {$w_{j+1}$};
  \node at (10*1.2-5*1.2,1*1.2) [right] {$w_n$};
  \node at (8.5*1.2-5*1.2,1.3*1.2) [right] {...};
  \node at (2.2*1.2-5*1.2,1.1*1.2) [right] {...};
  \node at (5.9*1.2-5*1.2,0.9*1.2) [right] {...};
  \node at (4*1.22-5*1.2,1*1.2) [right] {$w_i'$};
  \node at (4.54*1.2-5*1.2,1.5*1.2) [right] {$v_{\ell+1}'$};
  \node at (6.85*1.2-5*1.2,1.55*1.2) [right] {$w_j'$};

  \node at (-0.4-5*1.2,1.4*1.2) [right] {.};
  \node at (-0.4-5*1.2,1.5*1.2) [right] {.};
  \node at (-0.4-5*1.2,1.6*1.2) [right] {.};
  \node at (-0.18-5*1.2,3.4*1.2) [right] {.};
  \node at (-0.18-5*1.2,3.5*1.2) [right] {.};
  \node at (-0.18-5*1.2,3.6*1.2) [right] {.};

  \node at (10*1.2-5*1.2,1.4*1.2) [right] {.};
  \node at (10*1.2-5*1.2,1.5*1.2) [right] {.};
  \node at (10*1.2-5*1.2,1.6*1.2) [right] {.};
  \node at (9.85*1.2-5*1.2,3.4*1.2) [right] {.};
  \node at (9.85*1.2-5*1.2,3.5*1.2) [right] {.};
  \node at (9.85*1.2-5*1.2,3.6*1.2) [right] {.};

  \draw(-5*1.2,0.5*1.2) parabola bend (1*1.2-5*1.2,2.5*1.2) (1.5*1.2-5*1.2,2*1.2);
  \draw(1.5*1.2-5*1.2,2*1.2) parabola bend (1.8*1.2-5*1.2,1.3*1.2) (2.1*1.2-5*1.2,1.65*1.2);
  \draw(2.1*1.2-5*1.2,1.65*1.2) parabola bend (2.4*1.2-5*1.2,1.25*1.2) (2.7*1.2-5*1.2,1.65*1.2);
  \draw(2.7*1.2-5*1.2,1.65*1.2) parabola bend (3*1.2-5*1.2,1.3*1.2) (3.5*1.2-5*1.2,2.5*1.2);
  \draw(3.5*1.2-5*1.2,2.5*1.2) parabola bend (4*1.2-5*1.2,3*1.2) (7*1.2-5*1.2,3*1.2);
  \draw(7*1.2-5*1.2,3*1.2) parabola bend (7.5*1.2-5*1.2,2*1.2) (7.8*1.2-5*1.2,2.3*1.2);
  \draw(7.8*1.2-5*1.2,2.3*1.2) parabola bend (8.2*1.2-5*1.2,1.75*1.2) (8.2*1.2-5*1.2,1.75*1.2);
  \draw(8.2*1.2-5*1.2,1.75*1.2) parabola bend (8.2*1.2-5*1.2,1.75*1.2) (8.5*1.2-5*1.2,2.18*1.2);
  \draw(8.5*1.2-5*1.2,2.18*1.2) parabola bend (8.8*1.2-5*1.2,3*1.2) (9.2*1.2-5*1.2,1.6*1.2);
  \draw(4.2*1.2,1.6*1.2) parabola bend (4.6*1.2,1.7*1.2) (4.6*1.2,1.7*1.2);
  \draw(4.6*1.2,1.7*1.2) parabola bend (4.6*1.2,1.7*1.2) (5*1.2,1*1.2);

  \draw[color=gray,very thick,solid](-5*1.2,-0.5*1.2)--(-5*1.2,3.25*1.2);
  \draw[color=gray,very thick,solid](10*1.2-5*1.2,-0.5*1.2)--(10*1.2-5*1.2,3.25*1.2);
  \draw(-5*1.2,-0.5*1.2)--(10*1.2-5*1.2,-0.5*1.2);

  \draw[color=red,very thick,solid](3.45*1.2-5*1.2,2.25*1.2) parabola bend (4*1.2-5*1.2,1*1.2) (4.5*1.2-5*1.2,1.75*1.2);
  \draw[color=red,very thick,solid](4.5*1.2-5*1.2,1.75*1.2) parabola bend (5*1.2-5*1.2,1*1.2) (5.5*1.2-5*1.2,1.9*1.2);
  \draw[color=red,very thick,solid](5.5*1.2-5*1.2,1.9*1.2) parabola bend (6*1.2-5*1.2,1*1.2) (7*1.2-5*1.2,1.85*1.2);
  \draw[color=red,very thick,solid](7*1.2-5*1.2,1.85*1.2) parabola bend (7.5*1.2-5*1.2,2*1.2) (7.5*1.2-5*1.2,2*1.2);

  \draw[color=blue,very thick,solid,->](4*1.2-5*1.2,3*1.2)--(4*1.2-5*1.2,2.25*1.2);
  \draw[color=blue,very thick,solid,->](4.5*1.2-5*1.2,3*1.2)--(4.5*1.2-5*1.2,2.25*1.2);
  \draw[color=blue,very thick,solid,->](6*1.2-5*1.2,3*1.2)--(6*1.2-5*1.2,2.25*1.2);
  \draw[color=blue,very thick,solid,->](7*1.2-5*1.2,3*1.2)--(7*1.2-5*1.2,2.25*1.2);

%  \draw(0,2)..controls (1,5.5) and (1.5,3)..(1.7,2.3);
%  \draw(1.7,2.3)..controls (1.9,3) and (2.1,1.8)..(2.3,2.5);
%  \draw(2.3,2.5)..controls (2.5,1.5) and (3,4)..(4.5,5);
%  \draw[very thick,solid,](4.5,5)--(8,5);
%  \draw(8,5)..controls (8.5,3) and (8.8,4)..(9.2,2.5);
%  \draw(9.2,2.5)..controls (9.6,3) and (9.8,4)..(10,2.5);

\end{tikzpicture}

\caption{Reducing the value of a widget sequence}
  \label{fig:widgets}
\end{figure}
\begin{proof}  Let $L$ be the greatest level of $\delta_l$  occurring in the $\mathcal{W}$-widget sequence: by assumption, $L>0$. Let $i$, where
$0\leq i\leq n$, be the smallest such that the level of $\delta_i$ is $L$. We will construct a new $\mathcal{W}$-widget sequence where, in particular, $(\delta_i,k_i)$ is replaced by a new widget in its fixed descent, and where we involve no new elements of $\overline{D}$ of level higher than $L-1$.

Consider $\gamma_{i+1}h_{i+1}$. Since $\delta_i=\gamma_{i+1}$, we have $L(\gamma_{i+1})=L$ so that
$L(\gamma_{i+1}h_{i+1})=L(\delta_{i+1}k_{i+1})\leq L$.  If
$L(\gamma_{i+1}h_{i+1})=L$, then we are forced to have
$L(\delta_{i+1})=L(\gamma_{i+2})=L$. Continuing in this manner, since $L(\gamma_{n+1} h_{n+1})=0$, we arrive at $j$ where ${i+1}\leq j\leq n+1$ such that
\[ L=L(\gamma_l)=L(\gamma_lh_l)=L(\delta_l)=L(\delta_lk_l) \]
for $i+1\leq l<j$ but \[L(\gamma_j h_j)<L=L(\gamma_j).\]
We remark that in the degenerate case where $n=0$ and so
$(\delta_0,k_0s_0)=(\gamma_{1}, h_{1} s_{1})$, then as
$L(\delta_0)=L(\gamma_1)=L>0$, and $L(\delta_0k_0)=L(\gamma_1h_1)=0$, in this case, $i=0$ and $j=1$.

From above, we have that $\gamma_{i+1}$, together with
\[\gamma_{i+1}, \delta_{i+1}=\gamma_{i+2}, \cdots, \delta_{j-1}=\gamma_{j} \]
and if $i+1<j$
\[\gamma_{i+1}h_{i+1}=\delta_{i+1}k_{i+1}, \cdots,
\gamma_{j-1}h_{j-1}=\delta_{j-1}k_{j-1},\]
all have level $L$. Given the equalities, and the construction of
$D^{fp}(1)$, this can only happen if
\[\gamma_{a}=(c_a), \, i+1\leq a\leq j\]
and
\[ \delta_b=(d_b),\, i+1\leq b\leq j-1,\]
where $(u)$ denotes the $\rho_{\Sigma}$-class of $u\in S$ for some
$\Sigma=\Sigma(\FF,\phi)\in\mathcal{S}$.
It follows that
\[ c_{a}h_{a}\,\rho_{\Sigma}\, d_{a}k_{a},\, i+1\leq a\leq j-1\]
and then, using the definition of a $\mathcal{W}$-widget sequence,
\[c_{i+1}h_{i+1}s_{i+1}\,\rho_{\Sigma}\, d_{i+1}k_{i+1}s_{i+1}
\,\rho_{\Sigma}\, c_{i+2}h_{i+2}s_{i+2}\,\rho_{\Sigma}\, d_{j-1}k_{j-1}s_{j-1}.\]
If $i>0$ then we notice that $L(\delta_{i-1})=L(\gamma_i)<L$ and so $L(\delta_{i}k_{i})=L(\gamma_ih_i)<L$. Clearly $0=L(\delta_{i}k_{i})<L$  is immediately true if $i=0$, by our assumptions on the end points of the
$\mathcal{W}$-sequence. Now
from the fact $L(\delta_ik_i)<L(\delta_i)=L$ and
$L(\gamma_j h_j)<L=L(\gamma_j)$, we have widget descents, as the first steps in our fixed, chosen, widget descents
\[(\delta_i,k_i)\rightarrow (\delta_i',k_i')\mbox{ and }
(\gamma_j,h_j)\rightarrow (\gamma_j',h_j').\]
The construction of $\mathcal{W}$ tells us that
$(\delta_i',k_i'), (\gamma_j',h_j')\in \mathcal{W}$.
By choice of our descent sequences, $(\delta_i',k_i')$
and $(\gamma_j',h_j')$ are obtained from
\[d_ik_i\,\rho_{\Sigma}\,  vk_i'\mbox{ and } c_jh_j\,\rho_{\Sigma}\, wh_j'\]
where $v\phi=\delta_i'$ and $w\phi=\gamma_j'$ for some
$v,w\in \FF^1$. This now gives us, together with earlier statements, that
\[vk_i's_i\,\rho_{\Sigma}\, d_ik_is_i\,\rho_{\Sigma}\, c_{i+1}h_{i+1}s_{i+1}\,\rho_{\Sigma}\,d_{j-1}k_{j-1}s_{j-1}\,\rho_{\Sigma}\, c_jh_js_j
\,\rho_{\Sigma} \, wh_j's_j.\]

A consequence of this is that
\[(z^{\Sigma}_vk_i's_i)\psi_{\Sigma}=(z^{\Sigma}_wh_j's_j)\psi_{\Sigma}\]
and so there is a $J(\Sigma)$-sequence
\[z^{\Sigma}_vk_i's_i=U_1t_1, V_,t_1=U_2t_2,\cdots\, , V_mt_m=z^{\Sigma}_wh_j's_j,\]
where $m\in \N^0$, $(U_i,V_i)=(z^{\Sigma}_{u(i)}{u_i},z^{\Sigma}_{v(i)}v_i)\in J(\Sigma)$ and  $t_i\in S$ for
$1\leq i\leq m$. Notice that from our choice of $\mathcal{W}$, we have that $(u(i)\phi,u_i),(v(i)\phi, v_i)\in \mathcal{W}$
for $1\leq i\leq m$, and these widgets all have level strictly less than $L$. If $m=0$ we immediately have that $v=w$ and
$k_i's_i=h_j's_j$. If $m\geq 0$ we   have the following sequences of equalities:
\[ v=u(1),v(1)=u(2),\,\cdots\, ,v(m)=w\]
and
\[k_i's_i=u_1t_1, \, v_1t_1=u_2t_2,\cdots\, ,v_mt_m=h_j's_j.\]
Finally, since $(U_i,V_i)\in J(\Sigma)$ we have
\[u(i)u_i\,\rho_{\Sigma}\, v(i)v_i\,\] so that
Proposition \ref{prop:easy} gives us that
\[u(i)\phi \, u_i=v(i)\phi \, v_i\]
for $1\leq i\leq m$.

We observe that if $m=0$, then $\delta_i'=v\phi=w\phi=\gamma_j'$,  and otherwise, $\delta_i'=v\phi=u(1)\phi$ and
$v(m)\phi=w\phi=\gamma_j'$. We can now write down our new $\mathcal{W}$-widget sequence:

\[(\delta_0,k_0s_0)=(\gamma_1, h_1 s_1),\,
(\delta_1,k_1s_1)=(\gamma_2, h_2 s_2),\, \cdots,
\,
(\delta_{i-1},k_{i-1}s_{i-1})=(\gamma_i, h_{i} s_{i}),\]\[
(\delta_i', k_i's_i)=(u(1)\phi , \, u_1t_1),
(v(1)\phi, v_1t_1)=(u(2)\phi,u_2t_2),\cdots\, , (v(m)\phi,v_mt_m)=(\gamma_j',h_j's_j),  \]
\[(\delta_j,k_js_j)=(\gamma_{j+1},h_{j+1}s_{j+1}), \cdots,
(\delta_n,k_ns_n)=(\gamma_{n+1}, h_{n+1} s_{n+1}).\]

If $i=0$ or $j=n+1$, then we have changed the end-points in the prescribed way. Notice that our new $\mathcal{W}$-widget sequence has value strictly less than the original.

\end{proof}

Induction now yields the following.

\begin{Cor}\label{cor:thefloor} Let $(\delta_0,k_0s_0)$ and $(\gamma_{n+1}, h_{n+1} s_{n+1})$ be connected via a  $\mathcal{W}$-widget sequence as in Definition~\ref{def:widgetsequence}. Suppose that the level of this sequence is strictly greater than $0$. Then there is a $\mathcal{W}$-widget sequence of level $0$  connecting  $(\delta_0'',k_0''s_0)$ and $(\gamma_{n+1}'', h_{n+1}''s_{n+1})$, where $(\delta_0'',k_0'')$ is  the final term of the fixed descent of the widget $(\delta_0,k_0)$ and  $(\gamma_{n+1}'', h_{n+1}'')$ is the final term of the fixed descent of the widget $(\gamma_{n+1}, h_{n+1})$.
\end{Cor}
\begin{proof} We begin by removing the widgets of highest level, until they are all removed. To lower the value of the sequence further, we must remove the widgets of the next highest value. We continue until the level of the $\mathcal{W}$-widget sequence is $0$. At that stage the endpoints have the required form.
\end{proof}

\begin{Lem}
We have that $\ker\psi=\langle \mathcal{V}\rangle.$
\end{Lem}

\begin{proof}
By Lemma \ref{lem:easyinclusion}, it only remains to show that $\ker\psi\subseteq \langle \mathcal{V}\rangle$. Let $(z_b r)\psi=(z_c t)\psi$ where $b, c\in I$. Then $[br]=[ct]$, so that $br~\rho~ct$ and there exists $H$-sequence  $$br=u_1s_1, v_1s_1=u_2s_2, \cdots, v_ns_n=ct$$ where
$n\in \N^0$, $(u_i, v_i)\in H$ and $t_i\in S$ for all $1\leq i\leq n$. Recall that $\sigma=[1]\overline{\iota}$
and $(\sigma,b),(\sigma,c),(\sigma,u_i)$ and $(\sigma,v_i)\in \mathcal{W}$ for $1\leq i\leq n$. Observe that for
$1\leq i\leq n$ we have that
\[\sigma u_i=[1]\overline{\iota}u_i=[u_i]\overline{\iota}=
[v_i]\overline{\iota}=[1]\overline{\iota}v_i=
\sigma v_i.\] We therefore have the following $\mathcal{W}$-widget sequence $$(\sigma, br)=(\sigma, u_1s_1), (\sigma, v_1s_1)=(\sigma, u_2s_2), \cdots, (\sigma, v_ns_n)=(\sigma, ct).$$ By induction, there exists a $\mathcal{W}$-sequence $$(\gamma(b), p(b)r)=(\gamma_1, h_1s_1), (\delta_1, k_1s_1)=(\gamma_2, h_2s_2), \cdots, (\delta_n, k_ns_n)=(\gamma(c), p(c) t).$$ Then

\[z_b r~\langle\mathcal{V}\rangle~ z_{s(\gamma(b))}q_{\gamma(b)}p(b) r=z_{s(\gamma_1)}q_{\gamma_1}h_1s_1 ~\langle\mathcal{V}\rangle~z_{s(\delta_1)}q_{\delta_1}k_1s_1=z_{s(\gamma_2)}q_{\gamma_2}h_2s_2 ~\langle\mathcal{V}\rangle~\]
\[
\dots~\langle\mathcal{V}\rangle~z_{s(\delta_n)}q_{\delta_n}k_ns_n
=~z_{s(\gamma(c))}q_{\gamma(c)}p(c)t~\langle\mathcal{V}\rangle~z_c t.\]
\end{proof}

We now have completed proof of Theorem \ref{mfp:main}.
\end{proof}

\begin{Conj}\label{con:fp} We conjecture that a further equivalent conditions could be added to Theorem~\ref{mfp:main}, namely that the classes of fp-pure $S$-acts and absolutely pure $S$-acts coincide. \end{Conj}

\section{Monoids $S$ that are not right coherent such that $\mathcal{A}_S(1)=\mathcal{A}_S(\aleph_0)$}\label{sec:ex}

In light of Theorem~\ref{coherent monoids}, in particular the construction of its proof, one might wonder whether right coherency is necessary for $\mathcal{A}_S(1)=\mathcal{A}_S(\aleph_0)$. However, this is not the case.

\subsection{Monoids with the fem-property}\label{subs:fem}

\begin{Def}\label{def:femyclic} A monoid $S$ satisfies the {\em fem-property} if every finitely generated $S$-act embeds into a monogenic act.
\end{Def}

We begin with an easy observation. Note that the strategy is, in some sense, reminiscent of that of \cite{levin:64, levin:70}.

\begin{Prop}\label{prop:cyclic} Let $S$ satisfy the fem-property. Then $\mathcal{A}_S(1)=\mathcal{A}_S(\aleph_0)$.
\end{Prop}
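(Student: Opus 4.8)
The plan is to show that if $S$ is an fem-monoid and $A$ is almost pure, then $A$ is absolutely pure. So let $\Sigma = \Sigma(x_1,\dots,x_n)$ be a finite consistent system of equations over $A$; I want to produce a solution in $A$. The key idea is to package the $n$ variables into a single variable by exploiting the fem-property, applied not to $A$ itself but to the canonical extension $A(\Sigma)$ of Section~\ref{sec:pre}.

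First I would recall from Proposition~\ref{prop:soln} that, since $\Sigma$ is consistent, $\nu_\Sigma \colon A \to A(\Sigma)$ is an embedding (so we identify $A$ with its image), and $([x_i])_{1\le i\le n}$ is a solution of $\Sigma$ in $A(\Sigma)$. By Proposition~\ref{cor:ret}, it suffices to show that $A$ is a retract of $A(\Sigma)$; equivalently, it suffices to find a retraction $A(\Sigma) \to A$. Now the finitely generated subact $B = \bigcup_{i} [x_i]S \cup (\text{constants})$ — more precisely, the finitely generated $S$-subact of $A(\Sigma)$ generated by $A$ together with $[x_1],\dots,[x_n]$, which is all of $A(\Sigma)$ if $A$ is finitely generated, but in general I should work with the finitely generated subact $D$ generated by $\{[x_1],\dots,[x_n]\} \cup F$ where $F \subseteq A$ is the finite set of constants appearing in $\Sigma$. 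Since $S$ has the fem-property, $D$ embeds into a monogenic act $eS \cong S/\lambda$ for some right congruence $\lambda$ on $S$; fix such an embedding $\iota \colon D \hookrightarrow eS$, and let $w = e\iota^{-1}$-type generator, i.e. pick $d_0 \in D$ with $d_0 \iota = e$ so that $D = d_0 S \cdot (\text{no})$ — actually $D \subseteq eS = d_0\iota S$, and the point is every element of $D$, in particular each constant $a \in F$ and each generator $[x_i]$, equals $d_0 \iota$ applied by some element of $S$, i.e. $a\iota = e s_a$ and $[x_i]\iota = e t_i$ for suitable $s_a, t_i \in S$.

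Next, the crucial step: translate the system $\Sigma$ over $A$ into a system $\Sigma'$ in the \emph{single} variable $x$ over $A$, designed so that a solution of $\Sigma'$ in $A$ reconstitutes a solution of $\Sigma$ in $A$. Using the embedding into the monogenic act, each constant $a$ corresponds to the element $es_a$ and each unknown $[x_i]$ to $et_i$; so an equation $x_i s = x_j t$ of $\Sigma$ becomes $x t_i s = x t_j t$, an equation $x_i s = x_j t'$ (same variable) becomes $x t_i s = x t_i t'$, and an equation $x_i s = a$ becomes $x t_i s = x s_a$, while we also add, for each constant $a$ and each relation forcing $es_a$ to behave correctly, the equations $x s_a = a$ (these last ones are the key constants-anchoring equations, legitimate since $x s_a = a$ has the form "variable $\cdot$ element $=$ constant"). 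Here $x$ is a single fresh variable. I would check that $\Sigma'(x)$ is consistent — a solution lives in $A \,\dot\cup\, (\text{the monogenic act glued appropriately})$, concretely in $A(\Sigma)$ via $x \mapsto d_0$ — and then, since $A$ is almost pure, $\Sigma'$ has a solution $c \in A$. Finally, set $c_i := c t_i \in A$; the anchoring equations guarantee $c s_a = a$ for each constant, and one reads off from the translated equations that $(c_1,\dots,c_n)$ satisfies every equation of $\Sigma$, so $\Sigma$ has a solution in $A$, as required.

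The main obstacle I anticipate is the bookkeeping around the constants: the fem-embedding $\iota \colon D \hookrightarrow eS$ need not fix $A$ pointwise in any canonical way, and $D$ typically contains only finitely many elements of $A$, not all of $A$, so I must be careful that the single-variable system $\Sigma'$ "remembers" exactly the constants of $\Sigma$ and forces the reconstructed $c_i = ct_i$ to match those constants — this is precisely what the anchoring equations $x s_a = a$ accomplish, and verifying their consistency together with the rest of $\Sigma'$ (i.e. that the assignment $x \mapsto d_0$ in $A(\Sigma)$ really does satisfy them, using $d_0\iota = e$, $a\iota = es_a$) is the delicate point. A cleaner alternative, which I would also consider, is to phrase everything via retracts directly: note $A(\Sigma)$ is generated over $A$ by finitely many elements, embed the finitely generated subact generated by $A$ and the $[x_i]$ — no, this fails since that subact need not be finitely generated unless $A$ is. So the single-variable translation via the monogenic envelope of the finitely-generated-relevant-part is the right route, and the constants-handling is where the care goes.
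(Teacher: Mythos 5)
Your overall strategy is the same as the paper's: pass to the finitely generated subact $D$ containing the constants and the values of the unknowns, embed it into a monogenic act $eS$ via the fem-property, rewrite $\Sigma$ as a one-variable system using the expressions $[x_i]\iota = et_i$ and $a\iota = es_a$, invoke almost purity of $A$, and read off a solution $(ct_i)$ of $\Sigma$ from a solution $c$ of the one-variable system. The translation itself, including your anchoring equations $xs_a = a$, is sound and is essentially the paper's $\Pi(w)$.

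However, the consistency verification as written fails at one point: you ``pick $d_0\in D$ with $d_0\iota = e$'' and propose $x\mapsto d_0$ as a solution of $\Sigma'(x)$ inside $A(\Sigma)$. No such $d_0$ need exist: the fem-property only gives an embedding $\iota\colon D\hookrightarrow eS$, and the generator $e$ is in general outside the image of $D$ (if it were always inside, $D\iota$ would contain $eS$ and $D$ itself would be monogenic, which already fails for a free act of rank $2$ over a free monoid). So the one-variable system need not have a solution in $A(\Sigma)$, and you cannot witness its consistency there. The correct witness is $e$ itself, which lives in $eS$ but not yet in any act containing $A$; the repair is to amalgamate $A$ and $eS$ over their common subact $A'$, the subact of $A$ generated by the constants of $\Sigma$ (note $A'\subseteq D$, so $A'\iota\subseteq eS$, and the set-theoretic amalgam of acts over a common subact always embeds both factors). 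In the resulting act, which contains $A$, the element $e$ solves $\Sigma'(x)$: for instance $et_is=[x_i]\iota s=[x_j]\iota t=et_jt$ for each $x_is=x_jt\in\Sigma$, and $es_a=a\iota=a$ after the identification. This amalgamation is exactly the step the paper supplies; your parenthetical ``the monogenic act glued appropriately'' gestures at it, but the concrete claim you actually rely on (an element $d_0$ of $A(\Sigma)$ mapping to $e$) is false in general. With the amalgamation inserted in place of that claim, the rest of your argument goes through.
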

\begin{proof} Let $\Sigma=\Sigma(X)$ be a finite consistent set of equations over an almost pure $S$-act $A$. Let $A'$ be the subact of $A$ generated by the constants appearing in the equations of $\Sigma$.  Certainly $\Sigma$ has a solution $(b_x)_{x\in X}$ in some $S$-act $B$ containing $A$ and hence $A'$. Let $B'$ be the subact of $B$ generated by $A'$ and $\{ b_x:x\in X\}$. By assumption $B'$ is a subact of a monogenic $S$-act $C=cS$. Let $b_x=cs_x$ for each $x\in X$ and let $\Pi=\Pi(w)$ be the set of equations in a single variable $w$ obtained by replacing each $xs=yt\in \Sigma$ by $ws_xs=ws_yt$ and each $zu=a\in \Sigma$ by $ws_zu=a$.  Then $c\in C$ is a solution to $\Pi$. We may amalgamate $A$ and $C$ over $A'$; call the amalgmation $D$. So, we can regard $\Pi$ as a  set of equations in one variable over $A$ with a solution in $D$. Since  $A$ is almost pure,  $\Pi$ has a solution $d\in A$. Clearly $(ds_x)_{x\in X}$ is a solution to $\Sigma$ in $A$.
\end{proof}

There are examples of right coherent monoids both with and without the fem-property. First, we characterise those monoids satisfying the fem-property.

\begin{Thm}\label{thm:fem} The following are equivalent for  a monoid $S$:
\begin{enumerate} \item $S$ satisfies the fem-property;
\item every 2-generated $S$-act embeds into a monogenic $S$-act;
\item $F_S(X)$ embeds into a monogenic $S$-act, where $|X|=2$;
\item
 $F_S(X)$ embeds into $S$,  where $|X|=2$;
\item  there exists left cancellable elements $s,t\in S$ such that $sS\cap tS=\emptyset$.
\end{enumerate}
\end{Thm}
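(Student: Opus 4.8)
The plan is to prove the cycle of implications $(1)\Rightarrow(2)\Rightarrow(3)\Rightarrow(4)\Rightarrow(5)\Rightarrow(1)$, exploiting that $F_S(X)$ with $|X|=2$ is the ``generic'' $2$-generated act and that a monogenic subact of $S$ is just a principal right ideal.

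\medskip
\noindent\textbf{Step 1: $(1)\Rightarrow(2)\Rightarrow(3)$.} These are immediate. A $2$-generated act is finitely generated, so $(1)\Rightarrow(2)$ trivially, and $F_S(X)$ with $|X|=2$ is itself a $2$-generated act, so $(2)\Rightarrow(3)$ trivially.

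\medskip
\noindent\textbf{Step 2: $(3)\Rightarrow(4)$.} Suppose $\iota\colon F_S(X)\hookrightarrow cS$ is an embedding, where $X=\{x_1,x_2\}$ and $C=cS$ is monogenic. Write $x_1\iota=cs$ and $x_2\iota=ct$. I claim the map $\varphi\colon F_S(X)\to S$ sending $x_1u\mapsto su$, $x_2u\mapsto tu$ is a well-defined $S$-embedding. It is well-defined and an $S$-morphism because in $F_S(X)$ the relation $x_i u = x_j v$ forces $i=j$ and $u=v$ (noted in the excerpt), so the only thing to check is injectivity. If $x_iu\varphi = x_jv\varphi$, i.e.\ (say) $su=tv$ in $S$, then $csu = ctv$, i.e.\ $(x_1u)\iota = (x_2v)\iota$, whence by injectivity of $\iota$ we get $x_1u = x_2v$ in $F_S(X)$ --- impossible. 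The same argument handles the cases $i=j$. Hence $\varphi$ embeds $F_S(X)$ into $S$.

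\medskip
\noindent\textbf{Step 3: $(4)\Rightarrow(5)$.} Let $\psi\colon F_S(X)\hookrightarrow S$ be an embedding, and set $s = x_1\psi$, $t = x_2\psi$. For left cancellability of $s$: if $su=sv$ then $(x_1u)\psi = (x_1v)\psi$, so $x_1u=x_1v$ in $F_S(X)$, so $u=v$; similarly $t$ is left cancellable. For $sS\cap tS=\emptyset$: if $sp=tq$ then $(x_1p)\psi=(x_2q)\psi$, forcing $x_1p=x_2q$ in $F_S(X)$, which is impossible. This gives $(5)$.

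\medskip
\noindent\textbf{Step 4: $(5)\Rightarrow(1)$.} This is the substantive direction. Given left cancellable $s,t\in S$ with $sS\cap tS=\emptyset$, I first build an embedding of $F_S(X)$ ($|X|=2$) into $S$ by $x_1\mapsto s$, $x_2\mapsto t$; injectivity follows from left cancellability (separating $x_iu$ from $x_iv$) and from $sS\cap tS=\emptyset$ (separating $x_1u$ from $x_2v$). Then, to handle an arbitrary finitely generated act $A = a_1S\cup\dots\cup a_nS$, I iterate: it suffices to show any $2$-generated act embeds into a monogenic one (then induct on $n$, at each stage merging the monogenic act absorbing $a_1S\cup\dots\cup a_{n-1}S$ with the one absorbing $a_nS$, again a $2$-generated problem). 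For a $2$-generated act $A$, write $A$ as a quotient of $F_S(X)$ by a congruence $\rho$; using the embedding $x_1\mapsto s$, $x_2\mapsto t$ I transport $\rho$ to the right congruence on the right ideal $sS\cup tS$ it generates, extend it suitably to all of $S$ (this is where disjointness of $sS$ and $tS$ is used to guarantee no unwanted collapse), form the quotient monogenic act $S/\sigma = \overline{1}S$, and check that $A\to S/\sigma$, $x_iu\mapsto \overline{s}u$ respectively $\overline{t}u$, is an embedding. The delicate point is verifying that the transported congruence does not identify any two distinct $\rho$-classes, i.e.\ that $\sigma$ restricted back to $sS\cup tS$ induces exactly $\rho$ and no more; here the hypotheses (left cancellability, $sS\cap tS=\emptyset$) are exactly what prevents the collapse.

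\medskip
\noindent\textbf{Expected main obstacle.} The routine part is $(1)$--$(4)$ and the ``transport'' direction of embeddings. The real work is $(5)\Rightarrow(1)$: correctly defining the right congruence $\sigma$ on $S$ extending the given congruence on $F_S(X)$ (pushed forward along $x_1\mapsto s$, $x_2\mapsto t$) and proving it induces nothing extra on $sS\cup tS$, so that the original $2$-generated act embeds faithfully into the resulting monogenic act $S/\sigma$. I would spell out $\sigma$ as the right congruence on $S$ generated by the image of the generating pairs of $\rho$, and verify via the description of generated congruences (Lemma~\ref{lem:gen}) that any $\sigma$-sequence linking two elements of $sS\cup tS$ can be taken to stay inside $sS\cup tS$ --- using $sS\cap tS=\emptyset$ to localise the sequence and left cancellability to cancel leading letters --- thereby descending to a $\rho$-sequence.
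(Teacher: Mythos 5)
Your proposal is correct, and the cycle of implications matches the paper's structure; the only substantive divergence is in how $(5)\Rightarrow(1)$ is executed. Where you propose to take the right congruence $\sigma$ on $S$ generated by the transported generating pairs of $\rho$ and then carry out a sequence analysis, the paper short-circuits this: writing $\theta:sS\cup tS\rightarrow A$ for the surjection with $s\theta=a$, $t\theta=b$ (which exists and is well defined precisely because $(5)$ makes $sS\cup tS$ free on $\{s,t\}$), it observes that $\kappa=\ker\theta\cup\iota_S$ is \emph{already} a right congruence on $S$ --- since $sS\cup tS$ is a right ideal, $\ker\theta$ is closed under right translation inside $S$, and elements outside $sS\cup tS$ are related only to themselves --- so $A\cong(sS\cup tS)/\ker\theta$ embeds into the monogenic act $S/\kappa$ with nothing further to check. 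Your anticipated ``delicate point'' in fact dissolves for the same reason: every generating pair of your $\sigma$ lies in the right ideal $sS\cup tS$, so any $\sigma$-sequence in the sense of Lemma~\ref{lem:gen} linking two elements of $sS\cup tS$ automatically stays inside it and descends to a $\rho$-sequence; neither left cancellability nor $sS\cap tS=\emptyset$ is needed at that stage (they are used only to identify $sS\cup tS$ with $F_S(X)$ in the first place). Your reduction from $n$-generated to $2$-generated acts via amalgamating over the subact $a_1S\cup\cdots\cup a_{n-1}S$ is sound (the paper leaves $(2)\Rightarrow(1)$ as ``clear'' and uses the same amalgamation device elsewhere), and your direct verifications of $(3)\Rightarrow(4)$ and $(4)\Rightarrow(5)$ are, if anything, cleaner than the paper's pullback argument.
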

\begin{proof}  It is clear that (1) and (2) are equivalent,  that (2) implies (3), and that (4) and (5) are equivalent.

Suppose that (3) holds.  Let $X=\{ x,y\}$ and $F_S(X)=
xS\cup yS$ such that there is an $S$-morphism $\theta: F_S(X)\rightarrow cS$ for some monogenic $S$-act $cS$.
Consider $\psi:S\rightarrow cS$, where $1\psi=c$. If $D=(xS\cup yS)\theta$, then it is easy to see that $D\psi^{-1}=x'S\cup y'S$, where $x\theta=x'\psi$ and $y\theta=y'\psi$,  is a subact of $S$ isomorphic to $F_S(X)$. Thus (4) holds.

Finally, suppose that (5) holds and $A=aS\cup bS$ is a 2-generated $S$-act. Let  $\theta:sS\cup tS\rightarrow  A$ be such that $s\theta=a$ and $t\theta=b$. Let $\kappa=\ker\theta \cup \iota_S$ where $\iota_S$ is the identity relation on $S$. It is clear that $\kappa$ is a right congruence on $S$, and $A$ embeds into $S/\kappa$. Hence (2) holds.
\end{proof}

It is clear from Theorem~\ref{thm:fem} that if $S$ is a monoid with zero, then $S$ does not have the fem-property. Neither does any inverse monoid, as any two principal right ideals of an inverse monoid have non-empty intersection.

From \cite[Corollary 5.6]{dandan:2020}, a monoid is right coherent if and only if the monoid obtained by adjoining a zero has the same property. Thus, having a zero, or not, is not significant for coherency.

On the other hand,  there are examples of monoids that are not right coherent monoid such that every act embeds into a monogenic one. From \cite{dandan:2020} we know that if $S=F_2\times F_2$ where $F_2$ is the free monoid on 2 generators, then $S$ is not right coherent.  Further, since $F_2\times F_2$ is cancellative, any two principal right ideals are isomorphic. It is  then easy to see that $S$ has property  (4) in Theorem~\ref{thm:fem}. For, if  $F_2$ is generated by $\{ a,b\}$,  we have $(a,1)S\cap (b,1)S=\emptyset$.

\subsection{Almost pure acts over the Fountain monoid}
The main result of this section, Theorem~\ref{fountain monoid}, gives an example of a  monoid  that is not right coherent, does not have the fem-property, yet  nevertheless $\mathcal{A}_S(1)=\mathcal{A}_S(\aleph_0)$. In fact, our example is almost as far from the fem-property as possible, in that it is a chain of length 5 of principal (right) ideals.

In choosing our example, we did not have a great deal of scope. As commented, many well-behaved monoids are known to be right coherent and, for those that are not, understanding the congruences on their finitely generated free acts would be hard. As mentioned above, it is known from \cite{gould:2017a} that free inverse monoids on more than one generator are not right coherent, but a full description of their right congruences is lacking. With this in mind we choose the following specific example, taken from \cite{gould:1992} and due to Fountain: we present it in a slightly different way.

\begin{Ex}\cite{gould:1992}\label{fountain} Let $G$ be an abelian group which is not finitely generated. Let $N=\{ 1,\alpha, \alpha^2,\alpha^3,\alpha^4=0\}$ be a 5-element monogenic monoid (with $\alpha$ having index 4 and period 1). Let $P=G\times N$ and define the relation $\sim $ on $P$ to be the union of equality with
\[\{ \big((g,\alpha^k), (h,\alpha^k)\big):g,h\in G, k\in \{ 3,4\}\}. \]
Notice that $\sim$ is a congruence on $P$. We let $S=P/\sim$. For convenience, we may denote $(g,\alpha^k)$ by $g\alpha^k$ or $\alpha^kg$.  We will also use Greek letters to denote elements of $S$, for example, $\beta=\alpha^3g=\alpha^3$. The element $\alpha^4$ is a zero for $S$ and we will usually denote this by $0$.
\end{Ex}

We call the  monoid  in Example~\ref{fountain} the {\em  Fountain monoid}.
As shown in  \cite{gould:1992}, the Fountain monoid is not right coherent. However, it is easy to see that its only (right) ideals are:
\[\{ 0\} \subset  \alpha^3S=\{ 0,\alpha^3\}\subset  \alpha^2S
\subset \alpha S\subset S.\]

We define two maps $$\psi: S\backslash \{0, \alpha^3\}\rightarrow \{0, 1, 2\}\mbox{~and~} \phi:  S\backslash\{0, \alpha^3\}\rightarrow G$$
by $$\beta\psi=i\mbox {~and~} \beta\phi=g, \mbox{~where~} \beta=\alpha^ig\in S\backslash\{0, \alpha^3\}.$$
For each $\beta\in S\backslash\{0, \alpha^3\}$, we therefore have $\beta=\alpha^{\beta\psi}\beta\phi$. Effectively, $\psi$ and $\phi$ are restrictions of the projection maps to the part of $S$ consisting of singleton equivalence classes, and will behave as morphisms provided products do not fall into the ideal $\alpha^3S$.

\begin{them}\label{fountain monoid}
Let $A$ be an $S$-act over the Fountain monoid $S$. Then  $A$ is almost pure if and only if it is absolutely pure.
\end{them}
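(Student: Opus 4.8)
The plan is to mimic the inductive strategy of Theorem~\ref{coherent monoids}: we show by induction on the number $n$ of variables that every finite consistent system $\Sigma=\Sigma(X)$ over an almost pure $A$ has a solution in $A$. The base case $n=1$ is immediate, and the constant-free case follows from Proposition~\ref{prop:local} exactly as before. So fix $\Sigma$ with $n\geq 2$ variables, containing at least one equation with a constant, and fix a solution $(b_y)_{y\in X}$ of $\Sigma$ in some over-act. The goal is to isolate one variable $x$ that occurs in a constant-bearing equation, produce a finite consistent one-variable system $\Pi(x)$ whose solutions in $A$ can be ``glued'' back, obtain a solution $c\in A$ of $\Pi(x)$ by almost purity, substitute $x\mapsto c$ into the equations of $\Sigma$ joining $x$ to other variables, and argue the resulting $(n-1)$-variable system $\overline\Sigma=\Sigma(Y)$, $Y=X\setminus\{x\}$, is consistent, then apply the inductive hypothesis.

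The essential difference from the coherent case is that we cannot invoke Theorem~\ref{thm:coherency}: the Fountain monoid $S$ is not right coherent, so the annihilators $\mathbf{r}([xs])$ and the intersection subacts $[xs]S\cap[zu]S$ of $F_S(X)/\rho_\Sigma$ need not be finitely generated. We must replace the abstract finite-generation inputs of Steps (a), (b), (c) by \emph{explicit} descriptions, exploiting the very small structure of $S$: its only right ideals form the chain $\{0\}\subset\alpha^3S\subset\alpha^2S\subset\alpha S\subset S$, and the maps $\psi,\phi$ track the ``$N$-coordinate'' and the ``$G$-coordinate'' on the part $S\setminus\{0,\alpha^3\}$ of singleton classes. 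The key combinatorial observation to extract is: for $s\in S$, if $sv\ne 0$ then $sv$ lies strictly higher in the ideal chain unless $v$ has $N$-coordinate $0$, i.e. multiplication quickly drops an element into the two-element bottom ideal $\alpha^3S=\{0,\alpha^3\}$ where $\sim$ has already collapsed the $G$-coordinate. Consequently, congruences like $\mathbf{r}([xs])$, while possibly infinitely generated as congruences on $S$, are controlled: membership $(u,v)\in\mathbf{r}([xs])$ forces either $u=v$, or $xsu\,\rho_\Sigma\,xsv$ with both sides in a bottom ideal, a situation that can be detected by a \emph{finite} amount of data about $\Sigma$. I would make this precise by a careful case analysis of where $[xs]$, $[xsu]$ sit in the quotient $F_S(X)/\rho_\Sigma$, and show that the ``bad'' pairs are, for the purpose of checking $csu=csv$, consequences of finitely many explicit equations that we throw into $\Pi(x)$. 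Similarly for Steps (b) and (c): intersections $[xs]S\cap[zu]S$ are either empty, or nonempty only through elements whose representatives have small $N$-coordinate, hence are ``finitely witnessed''.

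Concretely I would: (1) set up notation for the action of $S$ on $F_S(X)$ and record the ideal-chain and $\psi,\phi$ facts as lemmas; (2) for the chosen $x$, define $\Sigma(x)$ (pure $x$-equations), and finite sets $\Sigma_1(x),\Sigma_2(x),\Sigma_3(x)$ analogous to the coherent proof but with the generating sets $H(xs)$, $K(\dots)$, $L(\dots)$ replaced by explicit finite sets built from the at-most-$|S|$ many ``levels'' an element $xs$, $zu$ can reach under right multiplication; (3) verify $\Pi(x)=\Sigma(x)\cup\Sigma_1(x)\cup\Sigma_2(x)\cup\Sigma_3(x)$ is consistent with solution $b=b_x$, and pull back a solution $c\in A$ by almost purity; (4) run the three cases --- Case (i) both variables $\ne x$ (immediate from consistency of $\Sigma(X)$), Case (ii) one equation is $yt=xs$, Case (iii) both joined equations come through $x$ --- to check $\overline\Sigma$ is consistent via Proposition~\ref{prop:soln}(2), using in Cases (ii),(iii) that the relevant $\rho_\Sigma$-relations force membership in the explicitly-described annihilators/intersections; (5) apply the inductive hypothesis to $\overline\Sigma$ and reassemble. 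The main obstacle, and where the real work lies, is Step (2)/(the analysis behind it): proving that the potentially infinite data $\mathbf{r}([xs])$ and $[xs]S\cap[zu]S$ are, modulo finitely many equations derivable from $\Sigma$, as good as finitely generated for solution-transfer purposes. This is exactly the place where the special arithmetic of the Fountain monoid --- the length-$5$ ideal chain and the collapse of $G$ in $\alpha^3S$ --- must be used, and a clean statement here (something like: ``for any $\xi\in F_S(X)$, the set $\{[\xi v]: v\in S\}$ has at most $5$ elements and $[\xi v]=[\xi v']$ is witnessed by bounded data'') should make the rest of the argument parallel the coherent case.
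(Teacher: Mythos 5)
There is a genuine gap, and it sits exactly where you say ``the real work lies.'' Your key combinatorial claim --- that $(u,v)\in\mathbf{r}([xs])$ forces either $u=v$ or that $xsu\,\rho_\Sigma\,xsv$ with both sides already collapsed into the bottom ideal $\alpha^3S$, so that the annihilators and intersections are ``finitely witnessed'' by the length-$5$ ideal chain --- is false. Each of the top three levels of the chain contains a copy of the non-finitely-generated group $G$, so, for instance, two equations $x\alpha g_1=y\alpha h_1$ and $x\alpha g_2=y\alpha h_2$ in $\Sigma$ already yield $x\alpha(g_1h_1^{-1}h_2)\,\rho_\Sigma\,x\alpha g_2$ with neither side in $\alpha^3S$; the pairs $(u,v)\in G\times G$ with $x\alpha u\,\rho_\Sigma\,x\alpha v$ form a genuinely infinite congruence not detected by ideal-chain bookkeeping. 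Likewise your proposed clean statement that $\{[\xi v]:v\in S\}$ has at most $5$ elements is false, since $\xi g$ for $g\in G$ can give infinitely many distinct classes. So the plan as written does not produce the finite sets needed for $\Sigma_1(x),\Sigma_2(x),\Sigma_3(x)$, and the whole reduction to a one-variable system $\Pi(x)$ collapses.

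The paper's actual source of finiteness is different: it is the coherency of the group $G$, not the shortness of the ideal chain. After several normalisations you do not mention (replacing each constant equation by a canonical $x\tau(x)=a(x)$ using that every ideal $K(\bar x)=\{\gamma:\bar x\gamma\in A\}$ is principal; disposing of the case $a(x)0=a(x)$ for all $x$; choosing $x$ with $a(x)0\neq a(x)$ and $K(\bar x)$ maximal; and splitting on $\tau(x)\in\{\alpha,\alpha^2,\alpha^3\}$), the proof introduces a free $S$-act on \emph{three disjoint copies} $X^0\cup X^1\cup X^2$ of the variable set, one copy per level $\alpha^0,\alpha^1,\alpha^2$, and transfers each equation of $\Sigma$ to level-wise ``shadows'' whose entries lie in $G$. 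The forbidden patterns (established from $a(x)0\neq a(x)$) guarantee this transfer is faithful, and then the annihilators $\mathbf{r}([x^1])$, $\mathbf{r}([x^2])$ are finitely generated \emph{as right congruences on the coherent monoid $G$}; these finite generating sets are what populate $\Pi(x)$. Your outline has no substitute for this device, and without it the argument cannot be completed along the lines you describe.
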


\begin{proof}
Let $A$ be an almost pure $S$-act and let $\Sigma=\Sigma(X)$ be a finite consistent set  of equations over $A$. We must show that $\Sigma$ has a solution in $A$.

We proceed by induction. If  $|X|=1$, then $\Sigma$ has a solution in $A$, since $A$ is almost pure. We suppose that $|X|=n\geq 2$ and every consistent set of equations over $A$ in at most $n-1$ variables has a solution in $A$.

The first part of our strategy is to reduce the question of solubility in $A$ of $\Sigma$ to that of some `simpler' sets of equations obtained from $\Sigma$. Suppose that $\Sigma(X)$ is not connected, that is, we can write $\Sigma(X)$ as $\Sigma(Y)\cup\Sigma(Z)$ where $Y,Z$ are non-empty subsets of $X$ such that
$X=Y\dot{\cup} Z$ (so that also $\Sigma(Y)\cap\Sigma(Z)=\emptyset$). In this case, we could immediately call upon our inductive assumption to obtain a solution in $A$ to $\Sigma(Y)$ and $\Sigma(Z)$ and hence to $\Sigma(X)$. Thus, at any stage, we may assume our sets of equations are connected.

 Let $(\bar{y})_{y\in X}$ be a solution to $\Sigma(X)$ in some $S$-act $B$ containing $A.$ For each $x\in X$, let
 \[K(\bar{x})=\{ \gamma\in S: \bar{x}\gamma\in A\}.\]
 Notice that $K(\bar{x})$ is either empty or an ideal of $S$. Let $L=\{x\in X: K(\bar{x})\neq \emptyset\}$. Since each ideal of $S$ is principal, for each $x\in L$ we may fix some $\tau(x)\in S$ and $a(x)\in A$ such that $K(\bar{x})=\tau(x)S$ and  $\bar{x}\tau(x)=a(x)$. It follows that $\bar{x}S\cap A=a(x)S$. In the rest of the proof, we will always take  $\tau(x)$ to be a power of $\alpha$.

 Let $\Sigma_c(X)$ be the set of all equations of $\Sigma(X)$ involving constants, and $\Sigma_{nc}=\Sigma(X)\backslash \Sigma_c(X)$. We put
 \[\Sigma'(X)=\Sigma_{nc}(X)\cup \{x\tau(x)=a(x): x\in L\}.\] Certainly $\Sigma'(X)$ is finite and consistent with  a solution $(\bar{y})_{y\in X}$ in $B$. We claim that any solution to $\Sigma'(X)$ will be a solution to $\Sigma(X)$. Suppose that $(y^*)_{y\in X}$ is a solution to $\Sigma'(X)$. Notice first that for each $x\mu=b\in \Sigma(X)$, we have $\bar{x}\mu=b$ so that $K(\bar{x})\neq \emptyset$ and then $x\in L$. Thus, $\tau(x)$ and $a(x)$ exist with $\bar{x}\tau(x)=a(x)$  and $\mu=\tau(x)\nu$ for some $\nu\in S$.
As $(y^*)_{y\in X}$ is a solution of  $\Sigma'(X)$  we have $x^*\tau(x)=a(x)$ and so
\[b=\bar{x}\mu=\bar{x}\tau(x)\nu=a(x)\nu=x^*\tau(x)\nu=x^*\mu,\] as required. We therefore focus on finding a solution for $\Sigma'(X)$; relabelling $\Sigma'(X)$ by $\Sigma(X)$.

We proceed to eliminate some forms of $\Sigma(X)$ that are easy to handle.

Suppose that $\Sigma(X)$ contains no equations with constants. Since $A$ is almost pure, it has  local left zeros, and so from a comment following Definition~\ref{def:local}, $\Sigma(X)$ has a solution in $A$.
We suppose therefore that $\Sigma(X)$ contains at least one equation with a constant.

 Suppose that  $\Sigma(X)$ contains an equation of the form  $xg=a$ for some $g\in G$. Then  $\bar{x}=ag^{-1}\in A$
and $K(\bar{x})=S$. Replacing every $x$ in $\Sigma(X)$ by $\bar{x}$ gives a finite consistent set of equations over $A$ in $n-1$ variables with a solution $({\bar{y}})_{y\in Y},$ where $Y=X\backslash \{x\}$, so, by our inductive hypothesis, it has a solution $(\bar{\bar{y}})_{y\in Y} $ in
$A.$ Putting $\bar{\bar{x}}=\bar{x}=ag^{-1}$, we have that $(\bar{\bar{y}})_{y\in X}$ is a solution to $\Sigma(X)$ in $A$.

On the other hand, suppose that there exists $yg=z\gamma\in \Sigma(X)$ for some $y,z\in X$ with
$y\neq z$, $g\in G$ and $\gamma\in S$. Then $\bar{y}=\bar{z}hg^{-1};$ replacing every $y$ in $\Sigma(X)$ by $zhg^{-1}$ yields a consistent set of equations over $A$ in $n-1$ variables with a solution $(\bar{y})_{y\in Z}$ where $Z=X\setminus\{ y\}$. Again, by induction, it has a solution $(\bar{\bar{y}})_{y\in Z}$ in $A.$ Putting $\bar{\bar{y}}=\bar{\bar{z}}hg^{-1}$, we obtain a solution $(\bar{\bar{y}})_{y\in X}$ to $\Sigma(X)$ in $A$.

We assume therefore that  $\Sigma(X)$ contains at least one equation with a constant and there are no equations in $\Sigma(X)$ with form $xg=a$ or $xg=y\gamma$ for any $g\in G, \gamma \in S, a\in A$ and $x\neq y\in X$. Clearly, with such assumption,  $K(\bar{x})\neq S$ for each $x\in X$. For use in the later parts of the proof,
we define three disjoint copies of $X$ as follows:
 $$X^0=\{x^0: x\in X\}, X^1=\{x^1: x\in X\}, X^2=\{x^2: x\in X\}$$
 and put $Z=X^0\cup X^1\cup X^2$.

 Assume that for any $x\in X$ with $x\tau(x)=a(x)\in \Sigma(X)$, we have $a(x)0=a(x)$. Let $x\tau(x)=a(x), y\tau(y)=a(y)\in \Sigma(X)$ for some $x, y\in X$. Since $\Sigma(X)$ is connected, it follows that $x0 \,\rho_{\Sigma}\, y0$  so that $\bar{x}\mu=\bar{y}\nu$. Then
$$a(x)=a(x)0=\bar{x}\tau(x) 0=\bar{x}0=\bar{y}0=\bar{y}\tau(y) 0=a(y)0=a(y).$$
Hence all constants appearing in $\Sigma(X)$ are equal. Let $a(x)$ be one such constant. Since $a(x)t=a(x)$ for all $t\in S$, we deduce that $(\bar{\bar{y}})_{y\in X}$ is a solution to $\Sigma(X)$ where $\bar{\bar{y}}=a(x)$ for all $y\in X.$

 Suppose from now on that there exists some $y\tau(y)=a(y)\in \Sigma(X)$ where $a(y)0\neq a(y).$ Let $$W=\{x\in X: K(\bar{x})\neq 0, a(x)0\neq a(x)\}.$$ Pick $x\in W$ such that $K(\bar{x})=\tau(x)S$ is the largest ideal within all ideals $K(\bar{y})$ where $y\in W$. Notice that $\tau(x)\neq 0$ for all $x\in K$, for, if it did, then
 $x0=a(x)$ would give $a(x)=a(x)$, a contradiction.   Further,  $\tau(x)\neq 0$
 and $\tau(x)\neq 1$.  We therefore consider the following three cases  determined by the choice of $\tau(x)$, which themselves will require delicate argument. To simplify notation we  let $\rho=\rho_{\Sigma(X)}$.

\medskip

{\em Case  $\tau(x)=\alpha^3$.} We therefore have $x\alpha^3=a(x)\in \Sigma(X)$ with $a(x)0\neq a(x)$. We first point out some forbidden patterns.

We cannot have \begin{equation}x\alpha^2 g~\rho~y\alpha h, x\alpha^3~\rho~y\alpha^2 h\mbox{ or } x\alpha^3~\rho~y\alpha h \label{}\end{equation}  for any $y\in X, g, h\in G$. For, if we did, then we would have
\[\bar{x}\alpha^2 g=\bar{y}\alpha h, \, \bar{x}\alpha^3=\bar{y}\alpha^2 h\mbox{ or } \bar{x}\alpha^3=\bar{y}\alpha h.\]
But this would give in the first two cases that
$a(x)=\bar{x}\alpha^3=\bar{y}\alpha^2$ and in the third that
$a(x)=\bar{x}\alpha^3=\bar{y}\alpha$. If $a(x)=\bar{x}\alpha^3=\bar{y}\alpha^2$, then $\tau(y)=\alpha$ or
$\alpha^2$, and if $a(x)=\bar{x}\alpha^3=\bar{y}\alpha$ then
$\tau(y)=\alpha$. Since $K(\bar{x})\subset K(\bar{y})$ we must have  $a(y)0=a(y)$. But, in addition, either $a(x)=a(y)$ or
$a(x)=a(y)\alpha$, so that we obtain $a(x)0=a(x)$, a contradiction.

On the other hand, we cannot have
\begin{equation}x\alpha^i g~\rho~y\alpha^{i+j}h\label{}\end{equation} for $y\in X$, where $0\leq i\leq 3,\, 1\leq j\leq 4-i,\, g,h\in G.$ Otherwise,
\[x\alpha^3=x\alpha^i\alpha^{3-i}g~\rho~y\alpha^{i+j}h\alpha^{3-i}=y0\] and so $a(x)=\overline{x}\alpha^3=\overline{y}0$, implying $a(x)0=a(x)$, a contraction.

From the above forbidden patterns, we deduce that the equations in $\Sigma(X)$  involving $x$ must have one of the following forms:
$$x \alpha g=y\alpha h, x\alpha^2g=y\alpha^2 h, x\alpha^3=y\alpha^3, x0=y\gamma$$$$xg=xh, x\alpha g=x\alpha h, x\alpha^2 g=x\alpha^2h$$
where $g,h\in G, \gamma\in S$ and $x\neq y\in X$.

\medskip
Let $Y=X\backslash\{x\},$ and let $\Sigma(x)$ be the set of all equations in $\Sigma(X)$ just involving $x$ and $\Sigma(Y)$ the set of all equations in $\Sigma(X)$ just involving variables in $Y.$

Suppose that the equations involving $x$ and $y\neq x\in X$ only have forms $x\alpha^3=y\alpha^3, x0=y\gamma$, where $\gamma\in S.$
As $\Sigma(\overline{x})$ holds,  it has a solution $\bar{\bar{x}}$ in $A$ by assumption. Consider $$\Pi(Y)=\Sigma(Y)\cup \{y\alpha^3=\bar{\bar{x}}\alpha^3, y\gamma=\bar{\bar{x}}0: y\alpha^3=x\alpha^3, y\gamma=x0\in \Sigma(X)\}.$$ For all $y\alpha^3=x\alpha^3, y\gamma=x0\in \Sigma(X)$,
\[\bar{y}\alpha^3=\bar{x}\alpha^3=a(x)=\bar{\bar{x}}\alpha^3\]
and
\[\bar{y}\gamma=\bar{x}0=\bar{x}\alpha^30=a(x)0=\bar{\bar{x}}\alpha^30=\bar{\bar{x}}0.\] Thus, $(\bar{y})_{y\in Y}$ is a solution to $\Pi(Y)$, and so it has a solution $(\bar{\bar{y}})_{y\in Y}$ in $A$. It is easy to see that $(\bar{\bar{y}})_{y\in X}$ is a solution of $\Sigma(X)$ in $A$.

\medskip

Suppose therefore that there exist equations in $\Sigma(X)$ having form  $x \alpha g=y\alpha h$ or $x\alpha^2g=y\alpha^2 h$ for some $g, h\in G$ and $y\neq x\in X$.

 Let $F_S(Z)$ be the free $S$-act over $Z$, where $Z$ consists of three disjoint copies of $X$, as defined earlier. We proceed by defining three binary relations $H_1$, $H_2$ and $H_3$ on $F_S(Z)$ as follows:
 \[ \begin{array}{lll}
         H_1=\{(y^0u, y^0v), (y^1u, y^1v), (y^2u, y^2v): yu=yv\in \Sigma(X), y\in X, u, v\in G\},\\
         H_2=\{(y^1u, z^1v), (y^2u, z^2v): y\alpha u=z\alpha v\in \Sigma(X), y, z\in X, u, v\in G\},\\
         H_3=\{(y^2u, z^2v): y\alpha^2 u=z\alpha^2 v\in \Sigma(X), y, z\in X, u, v\in G\}.\end{array} . \]
Let $H=H_1\cup H_2\cup H_3$ and $\bar{\sigma}=\langle H\rangle$. Since $G$ is coherent by \cite{gould:1992},
$$\bold{r}([x^1])=\{(u, v)\in G\times G: x^1u~\bar{\sigma}~x^1v\}=\langle W_1\rangle$$ and
$$\bold{r}([x^2])=\{(u, v)\in G\times G: x^2u~\bar{\sigma}~x^2v\}=\langle W_2\rangle$$
where $W_1,W_2$ are finite.

We now claim that $\bar{x}\alpha u=\bar{x}\alpha v$ for any $(u, v)\in W_1$. For this purpose, we define $$\theta: Z\longrightarrow B$$ by $$y^0\theta=\bar{y}, y^1\theta=\bar{y}\alpha, y^2\theta=\bar{y}\alpha^2, \mbox{~for~all~} y\in  X.$$ It is easy to check that $H\subseteq \ker \theta$, and so there exists $$\bar{\theta}: F_S(Z)/\bar{\sigma}\longrightarrow B$$ defined by
$$[y^0]\bar{\theta}=\bar{y}, [y^1]\bar{\theta}=\bar{y}\alpha, [y^2]\bar{\theta}=\bar{y}\alpha^2, \mbox{~for~all~} y\in X.$$
Let $(u, v)\in W_1$. Then $x^1u~\bar{\sigma}~x^1v$, so that $$\bar{x}\alpha u=(x^1u)\bar{\theta}=(x^1v)\bar{\theta}=\bar{x}\alpha v.$$ Similarly, we can show $\bar{x}\alpha^2 p=\bar{x}\alpha^2 q$ for any $(p, q)\in W_2$.

\medskip

To find a solution to $\Sigma(X)$ in $A$, we now construct two finite set of equations $\Pi(x)$ and $\Pi(Y)$ as follows. Let
$$\Pi(x)=\Sigma(x)\cup \{x\alpha u=x\alpha v: (u, v)\in W_1\} \cup \{(x\alpha^2u=x\alpha^2 v: (u, v)\in W_2)\}.$$ Then $\Pi(\bar{x})$ holds, so that $\Pi(x)$ has a solution $\bar{\bar{x}}$ in $A$. Let $$\Pi(Y)=\Sigma(Y)\cup \{y\gamma=\bar{\bar{x}}\delta: y\gamma=x\delta\in \Sigma(X),  y \neq x\}.$$ Let $\rho'=\rho_{\Sigma(Y)}$, and so $\rho'=\rho_{\Sigma(Y)}\subseteq \rho_{\Sigma(X)}=\rho$.

We now show that $\Pi(Y)$ is consistent by considering the following cases.

\medskip

{\em Subcase (i)} $y\mu=\bar{\bar{x}}\kappa, z\nu=\bar{\bar{x}}\eta\in \Pi(Y)$ with $y\mu=x\kappa, z\nu=x\eta\in \Sigma(X)$. Suppose that $y\mu\delta~\rho'~z\nu\varepsilon$ for some $\delta, \varepsilon\in S$. Then $x\kappa\delta~\rho~y\mu\delta~\rho~z\nu\varepsilon~\rho~x\eta\varepsilon$. We consider the following subcases.

{\em Subcase (i)(a)} $\kappa\delta\in G$. This implies $\kappa\in G$, a contradiction.

 {\em Subcase (i)(b)} $\kappa\delta\in \alpha G$. By the forbidden patterns (4.1) and (4.2), we deduce $\eta\varepsilon\in \alpha G$. Let $\kappa\delta=\alpha g$ and $\eta\varepsilon=\alpha h$ for some $g, h\in G$. Then $x\alpha g~\rho~x\alpha h$, so that there exists $n\in \mathbb{N}$ and an $H(\Sigma)$-sequence $$x\alpha g=y_1u_1t_1, z_1v_1t_1=y_2u_2t_2, \cdots, z_nv_nt_n=x\alpha h$$ where $t_1, \cdots, t_n\in S$
and  $$y_1u_1=z_1v_1, \cdots, y_nu_n=z_nv_n\in \Sigma(X).$$ Again, by the forbidden patterns 4.1 and 4.2, we have $u_it_i, v_it_i\in \alpha G$ for all $1\leq i\leq n$, so that $u_i, v_i\in G$ or $u_i, v_i\in \alpha G$. Notice that $u_i, v_i\in G$ happens only if $y_i=z_i$ by assumption. Therefore, we have $(y_i^1(u_i)\phi, z_i^1(v_i)\phi)\in H_1\cup H_2$, implying $$(y_i^1(u_i)\phi(t_i)\phi, z_i^1(v_i)\phi(t_i)\phi)\in  \bar{\sigma}$$ and so $(y_i^1(u_it_i)\phi, z_i^1(v_it_i)\phi)\in  \bar{\sigma}$.

On the other hand, since the identities involving in the above $H(\Sigma)$-sequence holds in $F_S(Z)$, we have $$x^1 g=y_1^1(u_1t_1)\phi, z_1^1(v_1t_1)\phi=y_2^1(u_2t_2)\phi, \cdots, z_n^1(v_nt_n)\phi=x^1h$$ in $F_S(Z).$ Therefore, $x^1g~\bar{\sigma}~x^1h$ and so $(g, h)\in \bold{r}([x^1])$.  Then there exists $n\in \mathbb{N}$ and a $W_1$-sequence such that
\[g=p_1s_1, q_1s_1=p_2s_2, \cdots, q_ns_n=h\]
where  $s_1, \cdots, s_n\in G$ and $(p_i, q_i)\in W_1$ for all $1\leq i\leq n$. By the construction of $\Pi(x)$, we have $\bar{\bar{x}}\alpha p_i=\bar{\bar{x}}\alpha q_i$ for all $1\leq i\leq n$,  so that
\[\bar{\bar{x}}\alpha g=\bar{\bar{x}}\alpha p_1s_1=\bar{\bar{x}}\alpha q_1s_1=\cdots=\bar{\bar{x}}\alpha q_ns_n=\bar{\bar{x}}\alpha h\] and so
\[\bar{\bar{x}}\kappa\delta=\bar{\bar{x}}\alpha g=\bar{\bar{x}}\alpha h=\bar{\bar{x}}\eta\varepsilon\] as required.

 {\em Subcase (i)(c)}  $\kappa\delta\in \alpha^2 G$. In this case, we must have $\eta\varepsilon\in \alpha^2 G$. Let $\kappa\delta=\alpha^2 g$ and $\eta\varepsilon=\alpha^2 h$ for some $g, h\in G$.  By similar argument to that of {\em Subcase (i)(b)}, this time using the construction of $\sigma_2$ and the fact that $\bar{\bar{x}}\alpha^2u=\bar{\bar{x}}\alpha^2v$ for all $(u, v)\in W_2$, we can show that $\bar{\bar{x}}\kappa\delta=\bar{\bar{x}}\alpha^2 g=\bar{\bar{x}}\alpha^2 h=\bar{\bar{x}}\eta\varepsilon$.

 {\em Subcase (i)(d)}  $\kappa\delta=\eta\varepsilon=\alpha^3$ or $\kappa\delta=\eta\varepsilon=0$. As $\kappa\delta=\eta\varepsilon$, $\bar{\bar{x}}\kappa\delta=\bar{\bar{x}}\eta\varepsilon$.

 {\em Subcase (ii)} $z\nu=c\in \Sigma(Y)$ and $y\mu=\bar{\bar{x}}\kappa\in \Pi(Y)$ with $y\mu=x\kappa\in \Sigma(X)$. Suppose that $y\mu\delta~\rho'~z\nu\varepsilon$, and so $x\kappa\delta~\rho~y\mu\delta~\rho~z\nu\varepsilon$. Then $\bar{x}\kappa\delta=\bar{z}\nu\varepsilon=c\varepsilon$, so that $\kappa\delta=\tau(x)\gamma$ for some $\gamma\in S$. Notice that $x\tau(x)=a(x)\in \Sigma(x)$, so that $\bar{x}\tau(x)=a(x)=\bar{\bar{x}}\tau(x)$, and hence $$c\varepsilon=\bar{x}\kappa\delta=\bar{x}\tau(x)\gamma=\bar{\bar{x}}\tau(x)\gamma=\bar{\bar{x}}\kappa\delta.$$

 {\em Subcase (iii)}  $y\mu=b, z\nu=c\in \Sigma(Y)$. Let $y\mu\delta~\rho'~z\nu\varepsilon$ for some $\delta, \varepsilon\in S$. Then $y\mu\delta~\rho~z\nu\varepsilon$, and so $b\delta=c\varepsilon$ by Proposition \ref{prop:soln}.

\medskip

Therefore, $\Pi(Y)$ is consistent, so that it has a solution $(\bar{\bar{y}})_{y\in Y}$ in $A$ by induction, and hence $(\bar{\bar{y}})_{y\in X}$ is a solution to $\Sigma(X)$ in $A$.

\medskip

{\em Case $\tau(x)=\alpha^2$. } We therefore have $x\alpha^2=a(x)\in \Sigma(X)$ with $a(x)0\neq a(x).$  We first point out some forbidden patterns.

We cannot have
\begin{equation}x\alpha^ig\,\rho\, x\alpha^{i+j}h\label{}\end{equation}
for any $0\leq i\leq 2$ and any $1\leq j$. For, if we did, then multiplying by a suitable power of $\alpha$ would give
\[x\alpha^2g\,\rho\, x\alpha^{2+j}h.\]
But then it follows that

\[x\alpha^2g\,\rho\, x\alpha^{2+kj}h(g^{-1}h)^{k-1}\]
for any $0\leq k$. It follows that  $x\alpha^2\,\rho\, x0$, giving the contradiction that $a(x)0= a(x)$. Hence any equations of $\Sigma(x)$ must have one of the following forms
$$xg=xh, x\alpha g=x\alpha h, x\alpha^2g=x\alpha^2h, x\alpha^3=x0.$$

For $y\neq x$ we cannot have
\begin{equation}x\alpha g~\rho~y\alpha^3, x\alpha g~\rho~ y 0, x\alpha^2 g~\rho~y 0\label{}\end{equation} for any $g\in G$, as it would give $x\alpha^2 g~\rho~y 0$ and then
\[a(x)=\bar{x} \alpha^2 =\bar{x}\alpha^2 gg^{-1}=\bar{y}0\] and so $a(x)0=a(x)$, a contradiction.

 We cannot have any of the following:
\begin{equation}x\alpha g~\rho~yh~\rho~y\alpha s, x\alpha g~\rho~yh~\rho~y\alpha^2 s \mbox{ or } x\alpha g~\rho~y\alpha h~\rho~y\alpha^2 s \label{}\end{equation} \mbox{ or } \begin{equation} x\alpha^2 g~\rho~yh~\rho~y\alpha s,  x\alpha^2 g~\rho~yh~\rho~y\alpha^2 s \mbox{ or } x\alpha^2 g~\rho~y\alpha h~\rho~y\alpha^2 s\label{}\end{equation} for some $g, h, s\in G$, as any  of these would give $x\alpha^2~\rho~x0$, yielding the contradiction that $a(x)0=a(x)$.
However, notice that we are not ruling out $x\alpha^2 g~\rho~y\alpha^3$ for some $g\in G$.

We now define 4 binary relations on $F_S(Z)$ as follows:
\[ \begin{array}{lll}
         P_1=\{(y^0g, y^0h), (y^1g, y^1h), (y^2g, y^2h): yg=yh\in \Sigma(X),g, h\in G\},\\
         P_2=\{(y^1g, z^1h), (y^2g, z^2h): y\alpha g=z\alpha h\in \Sigma(X),  y, z\in X, g, h\in G\},\\
         P_3=\{(y^1g, z^2h): y\alpha g=z\alpha^2 h\in \Sigma(X),  y, z\in X, g, h\in G\},\\
         P_4=\{(y^2g, z^2h): y\alpha^2 g=z\alpha^2 g\in \Sigma(X),  y, z\in X, g, h\in G\}.\end{array} \]

Let $P=P_1\cup P_2\cup P_3\cup P_4$ and $\bar{\rho}=\langle P\rangle$. Since $G$ is coherent,
$$\bold{r}([x^1])=\{(u, v)\in G\times G: x^1u~\bar{\rho}~x^1v\}=\langle Q_1\rangle$$ and
$$\bold{r}([x^2])=\{(u, v)\in G\times G: x^2u~\bar{\rho}~x^2v\}=\langle Q_2\rangle$$  where $Q_1$ and $Q_2$ are finite.

We now claim that $\bar{x}\alpha u=\bar{x}\alpha v$ for any $(u, v)\in Q_1$ and $\bar{x}\alpha^2 p=\bar{x}\alpha^2 q$ for any $(p, q)\in Q_2$. Let $\theta$ be the map defined in Case $\tau(x)=\alpha^3$.  It is easy to check that $P\subseteq \ker \theta$, and so there exists $$\bar{\theta}: F_S(Z)/\bar{\rho}\longrightarrow B$$ defined by $$[y^0]\bar{\theta}=\bar{y}, [y^1]\bar{\theta}=\bar{y}\alpha, [y^2]\bar{\theta}=\bar{y}\alpha^2, y\in X.$$
Let $(u, v)\in Q_1$. Then $x^1u~\bar{\rho}~x^1v$, so that $$\bar{x}\alpha u=(x^1u)\bar{\theta}=(x^1v)\bar{\theta}=\bar{x}\alpha v.$$ Similarly, we can show that $\bar{x}\alpha^2 p=\bar{x}\alpha^2 q$ for any $(p, q)\in Q_2$.

\medskip

Let $Y=X\backslash \{x\}$. Let
\[\Pi(x)=\Sigma(x)\cup \{x\alpha u=x\alpha v: (u, v)\in T_1\}\cup \{x\alpha^2u=x\alpha^2 v: (u, v)\in T_2\}\]$$ \ \ \ \ \ \ \ \ \ \ \  \ \ \ \ \ \ \ \ \ \ \ \ \ \ \ \ \ \ \ \ \ \  \ \ \ \ \ \ \ \ \ \ \  \ \ \ \ \ \  \  \cup \ \{x\alpha^3=x0: \mbox{ if } \bar{x}\alpha^3=\bar{x}0 \}.$$
Then $\Pi(\bar{x})$ holds, so $\Pi(x)$ has a solution $\bar{\bar{x}}$ in $A$. Let
$$\Pi(Y)=\Sigma(Y)\cup \{y\gamma=\bar{\bar{x}}\delta: y\gamma=x\delta\in \Sigma(X)\}.$$ Clearly $\rho'=\rho_{\Sigma(Y)}\subseteq \rho_{\Sigma(X)}=\rho$.
We now show that $\Pi(Y)$ is consistent.

\medskip

 {\em Subcase (i)} $y\mu=\bar{\bar{x}}\kappa, z\nu=\bar{\bar{x}}\eta\in \Pi(Y)$, where $y\mu=x\kappa, z\nu=x\eta\in \Sigma(X)$. Suppose that
$y\mu\delta\,\rho'\, z\nu\varepsilon$ for some $\delta, \varepsilon\in S$. Then  $x\kappa\delta~\rho~y\mu\delta\,\rho\, z\nu\varepsilon~\rho~x\eta\varepsilon$.

{\em Subcase (i)(a)} $\kappa\delta\in G$. In this case, we have
$\kappa\in G$ and  $x\kappa=y\mu\in \Sigma(X)$, contradicting the  forms  of equations in $\Sigma(X)$.

{\em Subcase (i)(b)}  $\kappa\delta=\alpha^3$. We know that $\eta\varepsilon=\alpha^3$ or 0. If $\eta\varepsilon=\alpha^3$ we are done. If $\eta\varepsilon=0$, then  $x\alpha^3~\rho~x0$, so that $\bar{x}\alpha^3=\bar{x}0$, and hence $\bar{\bar{x}}\alpha^3=\bar{\bar{x}}0$ by the definition of $\Pi(x)$.

{\em Subcase (i)(c)}  If $\kappa\delta=0$. Here $\eta\varepsilon$ must be $\alpha^3$ or 0. Then by a similar argument to that of  {\em Subcase (i)(b)}, we have $\bar{\bar{x}}\alpha^3=\bar{\bar{x}}0$.

{\em Subcase (i)(d)}  $\kappa\delta\in \alpha G$. In this case, we have $\eta\varepsilon\in \alpha G$. Let $\kappa\delta=\alpha g$ and $\eta\varepsilon=\alpha h$ for some $g, h\in G$. Then $x\kappa\delta~\rho~x\eta\varepsilon$, so that either
 $x\kappa\delta=x\eta\varepsilon$, or there exists $n\in \mathbb{N}$ and  an $H(\Sigma)$-sequence
 \[x\alpha g=y_1u_1t_1, z_1v_1t_1=y_2u_2t_2, \cdots, z_nv_nt_n=x\alpha h\]
 where $t_1, \cdots, t_n\in S$ and \[y_1u_1=z_1v_1, \cdots y_nu_u=z_nv_n\in \Sigma(X).\]
In the first case, clearly $\kappa\delta=\eta\varepsilon$ so that
$\bar{\bar{x}}\kappa\delta=\bar{\bar{x}}\eta\varepsilon$. Suppose therefore we have an $H(\Sigma)$-sequence as given. By  the forbidden pattern 4.4, we cannot have $x\alpha g~\rho~w\alpha^3$ and $x\alpha g~\rho~w0$ for any $w\in X$, so that $u_it_i, v_it_i\in S\backslash\{0, \alpha^3\}$ for all $1\leq i\leq n$. For each $1\leq i\leq n$, consider $y_iu_i=z_iv_i\in \Sigma(X)$. Notice first that $x\alpha g~\rho~y_iu_it_i~\rho~z_iv_it_i$. If $y_i=z_i$, then by forbidden patterns 4.4, 4.5 and 4.6, we know $(u_i)\psi=(v_i)\psi$, so that $$(y_i^{(u_i)\psi}(u_i)\phi, y_i^{(v_i)\psi}(v_i)\phi)\in P.$$ Also, as $u_it_i, v_it_i\in S\backslash \{0, \alpha^3\}$, we deduce $$(y_i^{(u_it_i)\psi}(u_i)\phi, y_i^{(v_it_i)\psi}(v_i)\phi)\in P$$ so that $$(y_i^{(u_it_i)\psi}(u_i)\phi(t_i)\phi, y_i^{(v_it_i)\psi}(v_i)\phi(t_i)\phi)\in \bar{\rho}$$

If $y_i\neq z_i$, then neither $(u_i)\psi$ or $(v_i)\psi$ is 0 by our original assumptions on
$\Sigma(X)$. By the above analysis,  $(u_i)\psi, (v_i)\psi \in \{1, 2\}$. Notice that $(u_i)\psi$ and $(v_i)\psi$ may not be equal.

It follows from the construction of $P$ that $$(y_i^{(u_i)\psi}(u_i)\phi, z_i^{(v_i)\psi}(v_i)\phi)\in P.$$ Also, as $u_it_i, v_it_i\in S\backslash \{0, \alpha^3\}$, we deduce $$(y_i^{(u_it_i)\psi}(u_i)\phi, z_i^{(v_it_i)\psi}(v_i)\phi)\in P$$  so that $$(y_i^{(u_it_i)\psi}(u_i)\phi(t_i)\phi, z_i^{(v_it_i)\psi}(v_i)\phi(t_i)\phi)\in \bar{\rho}.$$ On the other hand, as the identities involving in the above $H(\Sigma)$-sequence are from $F_S(Z)$, $$z_i^{(v_it_i)\psi}(v_i)\phi(t_i)\phi
=y_{i+1}^{(u_{i+1}t_{i+1})\psi}(u_{i+1})\phi(t_{i+1})\phi$$  for all $1\leq i\leq n-1$.  Hence $$x^1g=y_1^{(u_1t_1)\psi}(u_1)\phi(t_1)\phi~\bar{\rho}~
z_1^{(v_1t_1)\psi}(v_1)\phi(t_1)\phi
=y_2^{(u_2t_2)\psi}(u_2)\phi(t_2)\phi$$
$$~\bar{\rho}~\cdots~\bar{\rho}~z_n^{(v_nt_n)\psi}(v_n)\phi(t_n)\phi=x^1 h$$ giving  $(g, h)\in \bold{r}([x^1])$. Then either $g=h$ or there exists $n\in \mathbb{N}$ and  a $Q_1$-sequence such that $$g=p_1s_1, q_1s_1=p_2s_2, \cdots, q_ns_n=h$$ where $s_1, \cdots, s_n\in G$ and  $(p_i, q_i)\in Q_1$ for all $1\leq i\leq n$.  Since $\bar{\bar{x}}\alpha p_i=\bar{\bar{x}}\alpha q_i$ for all $1\leq i\leq n$ , we have $$\bar{\bar{x}}\alpha g=\bar{\bar{x}}\alpha p_1s_1=\bar{\bar{x}}\alpha q_1s_1=\cdots=\bar{\bar{x}}\alpha q_ns_n=\bar{\bar{x}}\alpha h,$$ so that $\bar{\bar{x}}\kappa\delta=\bar{\bar{x}}\eta\epsilon$.

{\em Subcase (i)(e)}  $\kappa\delta\in \alpha^2G$. In this case, we must have $\eta\varepsilon\in \alpha^2 G$. Let $\kappa\delta=\alpha^2 g$ and $\eta\varepsilon=\alpha^2 h$ for some $g, h\in G$. Considering  $x\kappa\delta~\rho~x\eta\varepsilon$, if the  $H(\Sigma)$-sequence connecting $x\kappa\delta$ to $x\eta\varepsilon$ does not  involve any $w\alpha^3 \in S$ for any  $w\in X$, then by a similar discussion to that of {\em  Subcase (i)(d)} , we can show $\bar{\bar{x}}\kappa\delta=\bar{\bar{x}}\eta\varepsilon$. If $x\alpha^2g~\rho~w\alpha^3$, then $x\alpha^2~\rho~w\alpha^3$ and $\bar{x}\alpha^2=a(x)=\bar{w}\alpha^3$, and so $a(x)s=a(x)$ for any $s\in G$. Therefore $$\bar{\bar{x}}\kappa\delta=\bar{\bar{x}}\alpha^2g=a(x)g=a(x)=a(x)h=\bar{\bar{x}}\alpha^2 h=\bar{\bar{x}}\eta\varepsilon.$$

\medskip

 {\em  Subcase (ii)}  $y\mu=\bar{\bar{x}}\kappa \in \Pi(Y)$ with $y\mu=x\kappa\in \Sigma(X)$, $z\nu=c\in \Sigma(Y)$. Suppose that  $y\mu\delta~\rho'~z\nu\varepsilon$ for some $\delta, \varepsilon\in S$. Then $x\kappa\delta~\rho~z\nu\varepsilon$, giving $\bar{x}\kappa\delta=c\varepsilon\in A$, so that $\kappa\delta=\alpha^2g$ for some $g\in G$. Since $\bar{\bar{x}}\alpha^2=a(x)=\bar{x}\alpha^2$, we have $$c\varepsilon=\bar{x}\kappa\delta=\bar{x}\alpha^2g=a(x)g=\bar{\bar{x}}\alpha^2g=\bar{\bar{x}}\kappa\delta.$$

{\em Subcase (iii)} $y\mu=b, z\nu=c\in \Sigma(Y)$. If $y\mu\delta~\rho'~z\nu\varepsilon$ for some $\delta, \varepsilon\in S$, then $y\mu\delta~\rho~z\nu\varepsilon$, giving $b\delta=c\varepsilon$ by Proposition \ref{prop:soln}.

\medskip

Therefore, $\Pi(Y)$ is consistent. By induction, $\Pi(Y)$ has a solution $(\bar{\bar{y}})_{y\in Y}$ in $A$ and hence $(\bar{\bar{y}})_{y\in X}$ is a solution to $\Sigma(X)$ in $A$.

\medskip

{\em Case $\tau(x)=\alpha $.}   We therefore have $x\alpha=a(x) \in \Sigma(X), a(x)0\neq a(x).$ Notice that, for any $x\alpha^i g\in S$ with $g\in G$ and $i\geq 1$, $$\bar{x}\alpha^ig=\bar{x}\alpha \alpha^{i-1}g=a(x)\alpha^{i-1}g\in A.$$ Let $Y=X\setminus \{ x\}$. By replacing $x$ by $\bar{x}$ in all equations of $\Sigma(X)$ involving $x$, we obtain a finite consistent set of equations  $\Sigma(Y)$  with a solution $(\bar{y})_{y\in Y},$ and so it has a solution $(\bar{\bar{y}})_{y\in Y}$ in $A$ by our inductive hypothesis. Further, the set of equations
$\Sigma(x)$ of $\Sigma(X)$ which involve only the variable $x$ has a solution $\bar{\bar{x}}\in A$. We claim that  $(\bar{\bar{y}})_{y\in X}$ is a solution to $\Sigma(X).$ To this end we need only check equations of the form $x\beta=y\gamma$. By assumption, $\beta=\alpha \delta$ for some $\delta\in S$ and then
\[\bar{x}\alpha=a(x)=\bar{\bar{x}}\alpha\] so that
$\bar{\bar{y}}\gamma=\bar{x}\beta=\bar{\bar{x}}\beta$, as required.

\medskip

This concludes the proof that every almost pure $S$-act over the Fountain monoid  is absolutely pure.\end{proof}

\begin{qn}\label{qn:moreexamples}  Let $S$ be the  monoid  obtained by replacing the group $G$ in Example \ref{fountain} with any right coherent monoid $T$ such that the universal right congruence $\omega_T$ on $T$ is not finitely generated (for example, any monoid semilattice without a zero \cite{dandan:2019}). For such an $S$, the same argument as in \cite{gould:1992} gives that $S$ is not right coherent. However, can we deduce  $\mathcal{A}_S(1)=\mathcal{A}_S(\aleph_0)$? What if we change the period of $\alpha$? More speculatively, are all monoids such that $\mathcal{A}_S(1)=\mathcal{A}_S(\aleph_0)$ built in some way from right coherent monoids, and monoids satisfying the fem-property?
\end{qn}

\section{Condition for when almost pure acts are absolutely pure}\label{sec:ap=ap}

Let $\mathscr{G}$ be a set of finite frames and let $\F\subseteq \mathscr{G}$.
We first give a generic result that tells us when all $\F$-pure acts are $\mathscr{G}$-pure. We then specialise this to Theorem~\ref{main theorem}, which gives a  condition for all almost pure $S$-acts to be absolutely pure entirely in terms of finitely presented $S$-acts, their $S$-subacts, and their canonical extensions. Recall that the canonical extensions are obtained via analysis of which sets of equations are consistent, which is itself described in terms of congruences on certain free $S$-acts and a `base' $S$-act.

\begin{Thm}\label{almostmaintheorem} Let $S$ be a monoid and let $\F\subseteq \mathscr{G}$, where $\mathscr{G}$ is a set of finite frames. The following are equivalent:

 \begin{enumerate} \item every $\F$-pure  $S$-act is $\mathscr{G}$-pure;
 \item every $S$-act of the form $A(\F)$ is $\mathscr{G}$-pure;
 \item for any $S$-act $A$, we have that $A(\F)$ is a retract of $A(\mathscr{G})$.
 \end{enumerate}
\end{Thm}
\begin{proof} Clearly (1) implies (2).
Suppose now that  (2) holds. In view of our careful constructions, we may regard $A(\mathscr{G})$ as being built from $A(\F)$.  For, having constructed $A^{\mathscr{G}}_{i}$ from $A(\F)$, obtaining an amalgam $A^{\mathscr{G}}_{i}\cup A(\F)$, with $A^{\mathscr{G}}_{i}\cap A(\F)=A^{\mathscr{F}}_{i}$,we construct $A^{\mathscr{G}}_{i+1}$ from this amalgam  by  making only those extensions that add  in solutions to finite consistent sets  of $\mathscr{G}$-equations with frames in $\mathscr{G}\setminus \F$, or finite consistent sets of $\mathscr{G}$-equations that have constants in $A^{\mathscr{G}}_{i}\cap A(\F)$. The way in which we always choose new variables to build our extensions ensures no contradiction arises.
   Proposition~\ref{prop:built} now gives that (3) holds.

  Finally, suppose that  (3) holds  and $A$ is an $\F$-pure $S$-act.  By Theorem~\ref{thm:towers} we have that $A$ is a retract of $A(\F)$, so that by (3), $A$ is a retract of $A(\mathscr{G})$.  A second application of Theorem~\ref{thm:towers} yields (1).
\end{proof}

Theorem~\ref{almostmaintheorem} is to a certain extent a universal-type result. The following is saying something more, and highlights the connection between finitely generated subacts of finitely presented $S$-acts (hence, coherency), and the question of when every almost pure $S$-act is absolutely pure.

Let $\mathscr{G}$ be the set of all finite frames, and let $\F$ be the set of all finite $1$-frames. For an $S$-act $A$ we let $A(\aleph_0):= A(\mathscr{G})$ and $A(1)=:A(\F)$.

 \begin{Thm} \label{main theorem} The following are equivalent for a monoid $S$:

 \begin{enumerate} \item every almost pure $S$-act is absolutely pure;
 \item every $S$-act of the form $A(1)$ is absolutely pure;
 \item every $S$-act of the form $A(1)$ where $A$ is a finitely generated $S$-subact of a finitely presented $S$-act is absolutely pure;
 \item for any $S$-act $A$ we have that $A(1)$ is a retract of $A(\aleph_0)$.
 \item for any $S$-act $A$, where $A$ is a finitely generated $S$-subact of a finitely presented $S$-act, $A(1)$ is a retract of $A(\aleph_0)$.
 \end{enumerate}
 \end{Thm}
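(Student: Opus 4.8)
The plan is to prove the cycle $(1)\Rightarrow(2)\Rightarrow(3)\Rightarrow(1)$ and to pick up $(4)$ and $(5)$ through the equivalences $(2)\Leftrightarrow(4)$ and $(3)\Leftrightarrow(5)$. For the easy implications I would use Theorem~\ref{thm:towers}: for every $S$-act $A$ the act $A(1)$ is almost pure, $A(\aleph_0)$ is absolutely pure, and $A$ is almost pure (absolutely pure) precisely when it is a retract of $A(1)$ (of $A(\aleph_0)$). Hence $(1)\Rightarrow(2)$ is immediate, $(2)\Rightarrow(3)$ is the trivial restriction to a subclass of acts, and $(4)\Rightarrow(2)$, $(5)\Rightarrow(3)$ follow from Proposition~\ref{prop:retractoneway}, a retract of the absolutely pure act $A(\aleph_0)$ being absolutely pure. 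For $(2)\Rightarrow(4)$ and $(3)\Rightarrow(5)$ I would first verify that $A(\aleph_0)$ is built from $A(1)$ in the sense of Definition~\ref{def:built}: realising each amalgamation step $(\,\cdot\,)^{\aleph_0}_1$ as a transfinite chain of single-system extensions, and exploiting the inclusions $A^1_i\subseteq A^{\aleph_0}_1$ noted after the construction, one rearranges the tower producing $A(\aleph_0)$ so that it passes through $A(1)$; then, if $A(1)$ is absolutely pure, Proposition~\ref{prop:built} gives that $A(1)$ is a retract of $A(\aleph_0)$, which delivers $(2)\Rightarrow(4)$ and, restricting the class of acts, $(3)\Rightarrow(5)$.

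The substantial step is $(3)\Rightarrow(1)$. Let $B$ be almost pure and let $\Sigma=\Sigma(X)$ be a finite consistent system over $B$; the aim is to find a solution of $\Sigma$ in $B$. First I would form $C(\Sigma)=F_S(X)/\rho_\Sigma$, its finitely generated subact $B(\Sigma)=\bigcup_{xs=a\in\Sigma}[xs]S$, and the morphism $\theta\colon B(\Sigma)\to B$, $[xs]v\mapsto av$, of Proposition~\ref{prop:soln}(3). Since $X$ is finite and $\rho_\Sigma=\langle H(\Sigma)\rangle$ with $H(\Sigma)$ finite, $C(\Sigma)$ is finitely presented, so $B(\Sigma)$ is a finitely generated subact of a finitely presented act, and hypothesis $(3)$ applies to it: $B(\Sigma)(1)$ is absolutely pure. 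Using the diagram-completion description of absolute purity (equivalently, Proposition~\ref{cor:ret}), I would complete the inclusions $B(\Sigma)\hookrightarrow C(\Sigma)$ and $\iota_0\colon B(\Sigma)\hookrightarrow B(\Sigma)(1)$ to a morphism $\mu\colon C(\Sigma)\to B(\Sigma)(1)$ with $\mu|_{B(\Sigma)}=\iota_0$. On the other side, almost purity of $B$ gives a retraction $r\colon B(1)\to B$, and $B(1)$ is itself almost pure; since $B(\Sigma)(1)$ is $1$-built from $B(\Sigma)$, the argument of Lemma~\ref{lem:building} and Proposition~\ref{prop:built} extends the composite $B(\Sigma)\to B\hookrightarrow B(1)$ to a morphism $\eta\colon B(\Sigma)(1)\to B(1)$. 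Then $\mu\eta r\colon C(\Sigma)\to B$ restricts on $B(\Sigma)$ to $\theta$, so by Proposition~\ref{cor:ret} the system $\Sigma$ has a solution in $B$; as $\Sigma$ was arbitrary, $B$ is absolutely pure.

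The hard part will be the double extension inside $(3)\Rightarrow(1)$: the key observation is that the ``shape'' act $B(\Sigma)$ is a finitely generated subact of the finitely presented act $C(\Sigma)$, so $(3)$ can be brought to bear on it; one then completes $B(\Sigma)\hookrightarrow C(\Sigma)$ into the absolutely pure act $B(\Sigma)(1)$ on one side, pushes $\theta$ forward to $B(\Sigma)(1)$ on the other using only almost purity of $B(1)$ together with the fact that $B(\Sigma)(1)$ is built from $B(\Sigma)$ by one-variable systems, and glues the two along $B(\Sigma)(1)$. The secondary technical point, used for $(2)\Rightarrow(4)$ and $(3)\Rightarrow(5)$, is the bookkeeping showing $A(\aleph_0)$ is built from $A(1)$; this is routine given the fixed-variable conventions but should be written out carefully.
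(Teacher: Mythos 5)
Your proof is correct and follows essentially the same route as the paper: the easy implications and the ``$A(\aleph_0)$ is built from $A(1)$'' argument for (2)$\Rightarrow$(4) are identical, and your (3)$\Rightarrow$(1) is the same two-step gluing the paper uses --- apply (3) to a finitely generated subact of a finitely presented act to obtain an absolutely pure one-variable closure, extend a morphism along the $1$-built tower using almost purity (the argument of Lemma~\ref{lem:building} and Proposition~\ref{prop:built}), and compose --- the only difference being that the paper runs this on a general diagram $M\hookleftarrow B\to A$ and cites \cite[Proposition 3.8]{gould:1985}, whereas you specialise to the canonical diagram $C(\Sigma)\hookleftarrow B(\Sigma)\to B$ and close with Proposition~\ref{cor:ret}, which makes the argument self-contained. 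The one point to tidy is the case where $\Sigma$ contains no equations with constants, so that $B(\Sigma)=\emptyset$ and hypothesis (3) cannot be applied to it; dispose of that case first via local left zeros (Proposition~\ref{prop:local}), as the paper does in its other proofs.
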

 \begin{proof} The equivalence of (1), (2) and (4) follows from Theorem~\ref{almostmaintheorem}. Clearly (2) implies (3) and (4) implies (5). Proposition~\ref{prop:retractoneway} and Theorem~\ref{thm:towers} give that  (5) implies (3).

Suppose that (3) holds.   Let $A$ be an almost pure $S$-act and let $\theta:B\rightarrow A$ be an $S$-morphism, where $B$ is a finitely generated $S$-subact of a finitely presented $S$-act $M$.  We follow the proof of Lemma~\ref{lem:building} to obtain an $S$-morphism $\theta_1:B^1_1\rightarrow A$ extending $\theta$.  But then we can iterate this process to obtain an $S$-morphism $\varphi:B(1)\rightarrow A$ extending
 $\theta$.

 Now, $B$ is embedded in $M$ where $M$ is finitely presented. Suppose that
 $M=F_S(X)/\rho$ where $X$ is finite and $\rho=\langle H\rangle$ where $H$ is finite.
 Let $C$ be a set of generators for $B$ and for each generator $c\in C$ pick $x_cs_c\in F_S(X)$ so that
 $c=[x_cs_c]$ and
 let \[\Sigma=\{ xu=yv, x_cs_c=c: (xu,yv)\in H, c\in C\}.\]
 Clearly, $\Sigma$ has a solution in $M$ where we substitute $[x]$ for $x$ for each $x\in X$. Regard $\Sigma$ as a set of equations over $B(1)$. Our assumption is that $B(1)$ is absolutely pure, so that there exists a solution $(c_x)_{x\in X}$ to $\Sigma$ in  $B(1)$.
 A standard argument then gives that
 $\psi:M\rightarrow B(1)$ given by $[x]\psi=c_x$ is a well defined $S$-morphism.
 Let $d\in B$, so that $d=cs$ for some $c\in C$. Then
 $$d\psi=(cs)\psi=(c\psi) s=[x_cs_c]\psi s=[x_c]\psi s_cs=c_{x_c}s_cs=cs=d.$$ Now consider
 $\psi\varphi:M\rightarrow A$. Clearly $\psi\varphi$ is an $S$-morphism, and
 $d\psi\varphi=d\varphi=d\theta$, for any $d\in B$. It follows from
 Theorem~\ref{thm:connection} that $A$ is absolutely pure.
 This completes the proof of (3) implies (1).
\end{proof}

Given the results of this article one might ask whether it true that for any monoid $S$ we have $\mathcal{A}_S(1)=\mathcal{A}_S(\aleph_0)$?  We conjecture that this is not the case and would hope that Theorem~\ref{main theorem} would help in constructing a counter-example.

\section*{Acknowledgements} The results of Subsection~\ref{subs:fem}  essentially follow from observations obtained in discussions with Professor Nik Ru\v{s}kuc. The authors are grateful for his permission to use them in this article.


\begin{thebibliography}{99}
\bibitem{berthiaume:1967} P. Berthiaume,  The injective envelope of $S$-sets, {\em  Canadian Mathematical Bulletin}, {\bf 10} (1967),  261--273.

\bibitem{bulmanfleming:1990} S.
Bulman-Fleming and K. McDowell,  Coherent Monoids,  in  {\em Lattices, Semigroups, and Universal Algebra}. Springer, Boston, MA; ed. J. Almeida, G. Bordalo, P.  Dwinger. (1990).

\bibitem{chase:1960} S. U. Chase, Direct product of modules, {\em Tran. Amer. Math. Soc.} {\bf 97} (1960), 457--473.

\bibitem{dandan:2019} Y. Dandan, V. Gould, T. Quinn-Gregson and R. Zenab, Semigroups with finitely generated left congruence relation, {\em Monatshefte f\"{u}r Mathematik} {\bf  190} (2019), 689-724.

\bibitem{dandan:2020} Y. Dandan, V. Gould, M. Hartmann, N. Ru\v{s}kuc and R. Zenab, Coherence and constructions for monoids, {\em Quart. J. Math. Oxford} {\bf 71} (2020), 1461--1488.

\bibitem{khosravi:2009}  M. Sedaghatjoo, R. Khosravi and M. Ershad, Principally weakly and weakly coherent monoids', {\em  Communications in Algebra} {\bf  37} (2009), 4281--4295.

\bibitem{gould:1985} V. Gould, The characterisation of monoids by properties of their $S$-systems, {\em Semigroup Forum} {\bf 32} (1985), 251--265.

\bibitem{gould:1986} V.  Gould, Model companions of $S$-systems, {\em Quart. J. Math. Oxford} {\bf 38} (1987), 189--211.

\bibitem{gould:1992} V. Gould, Coherent monoids, {\em J. Austral. Math. Soc.} {\bf 53} (1992), 166--182.

\bibitem{gould:1987} V. Gould, Completely right pure monoids, {\em Proc. Royal Irish Acad.} {\bf  87A} (1987), 73-82.

\bibitem{gould:2017} V. Gould, M. Hartmann and N. Ruskuc, Free monoids are coherent, {\em Proc. Edinb. Math Soc.} {\bf 60} (2017), 127--131.

\bibitem{gould:2017a} V. Gould and M. Hartmann, Coherency, free inverse monoids and related free algebras, {\em Math. Proc. Cambridge Phil. Soc.} {\bf 163} (2017), 23--45.

\bibitem{howie:1995} J. M. Howie, {\em Fundamentals of Semigroup Theory},
Oxford University Press (1995).

\bibitem{kkm:2000}
M. Kilp, U. Knauer and A. V. Mikhalev, {\em Monoids, Acts, and Categories}, de Gruyter, Berlin (2000).

\bibitem{levin:64} F. Levin, One variable equations over groups, {\em Arch. Mat.} {\bf 15} (1964), 179--188.

\bibitem{levin:70} F. Levin, One variable equations over semigroups, {\em Bull. Australian Math. Soc. } {\bf 2} (1970), 247--252.


\bibitem{megibben:1970} C. Megibben, Absolutely pure modules, {\em Proc. Amer. Math. Soc.} {\bf 26} (1970), 561--566.

\bibitem{normak:1977}  P. Normak, On noetherian and finitely presented polygons, {\em Uc. Zap. Tartu Gos. Univ.}
{\bf 431} (1977), 37--46.
\bibitem{lu:2012}  B. Lu and   Z.K. Liu, IFP-flat modules and IFP-injective modules,
{\em Comm. Algebra} {\bf 40} (2012), 361--374.

\bibitem{rotman:1979} J. Rotman, An Introduction to Homological Algebra, Academic Press, New-York  (1979).

\bibitem{stenstrom:1970} B. Stenstr\"{o}m, Coherent rings and FP-injective modules,  J. London Math. Soc.{\bf 2} (1970), 323--329.

\bibitem{wheeler:1976} W. H. Wheeler, Model companions and definability in existentially complete structures, {\em Israel J. Math.} {\bf 25} (1976), 305--330.
\end{thebibliography}
\end{document}